\documentclass[english]{amsart}

\usepackage[all,cmtip]{xy}
\usepackage{hyperref}
\usepackage{blindtext}
\usepackage[T1]{fontenc}
\usepackage[utf8]{inputenc}
\usepackage{textcomp}

\usepackage{geometry}
	\usepackage{tikz, tikz-cd}
\usepackage{amsmath,amssymb,amscd,amsfonts}

\usepackage{babel}
\usepackage{amstext}
\usepackage{amsmath}
\usepackage{amsfonts}
\usepackage{latexsym}
\usepackage{ifthen}

\usepackage[all,cmtip]{xy}
\xyoption{all}
\usepackage{enumitem}
\setlist[enumerate]{label=(\thethm.\arabic*), before={\setcounter{enumi}{\value{equation}}}, after={\setcounter{equation}{\value{enumi}}}}

\newcommand{\CC}{\mathbb{C}}

\newcommand{\DD}{\mathbb D}

\newcommand{\ddbar}{\partial\bar{\partial}}

\newcommand{\wh}{\widehat}

\newcommand{\cX}{\mathcal{X}}
\newcommand{\cF}{\mathcal{F}}

\newcommand{\Lie}{\mathcal{L}}
\newcommand{\cC}{\mathcal{C}}
\newcommand{\cH}{\mathcal{H}}

\newcommand{\cI}{\mathcal{I}}
\newcommand{\cG}{\mathcal{G}}

\renewcommand{\O}{\mathcal{O}}

\newcommand{\ep}{\varepsilon}
\renewcommand{\epsilon}{\varepsilon}
\newcommand{\im}{\mathrm{Im} \,}

\renewcommand{\ker}{\mathrm{Ker} \,}

\newcommand{\ol}{\overline}

\renewcommand{\ge}{\geqslant}
\renewcommand{\le}{\leqslant}
\renewcommand{\leq}{\leqslant}
\renewcommand{\geq}{\geqslant}

\newcommand{\Div}{\mathrm{Div}}

\newcommand{\Imm}{\mathrm{Im} \,}

\newcommand{\ddc}{dd^c}

\newcommand{\Pic}{\mathrm{Pic}}

\newcommand{\Rel}{\mathrm{Re}}
\newcommand{\Ker}{\mathrm{Ker}}
\newcommand{\Res}{\mathrm{Res}}

\newcommand{\dbar}{\bar \partial}

\newtheorem{thm}{Theorem}[section]
\newtheorem{lemme}[thm]{Lemma}
\newtheorem{proposition}[thm]{Proposition}
\newtheorem{quest}[thm]{Question}

\newtheorem{exemple}[thm]{Example}
\newtheorem{conjecture}[thm]{Conjecture}
\newtheorem{defn}[thm]{Definition}
\newtheorem{cor}[thm]{Corollary}

\newtheorem{remark}[thm]{Remark}

\numberwithin{equation}{thm}

\title{Hodge theory for \\ twisted log-differential forms}

\author[J. Cao]{Junyan Cao}
\address{Laboratoire de Mathématiques J.A. Dieudonné UMR 7351 CNRS, Université Côte d'Azur Parc Valrose 06108, Nice, France} 
\email{junyan.cao@univ-cotedazur.fr; junyan.cao@unice.fr}
\urladdr{https://sites.google.com/site/junyancao}

\begin{document} 
	
	\maketitle

Hodge theory plays a central role in complex geometry, furnishing a profound and elegant correspondence among the analytic, topological, and algebraic aspects of complex manifolds. By identifying cohomological invariants with spaces of harmonic forms, it yields deep structural insights and powerful consequences across the field.

In parallel, \(L^2\) estimates --arising in the study of holomorphic line bundles endowed with possibly singular metrics-- have emerged as indispensable analytic tools in complex geometry. They underpin fundamental results in vanishing theorems, extension phenomena, and the analysis of singular spaces.

It is thus both natural and fruitful to bring these two perspectives together, extending Hodge theory to differential forms with values in bundles equipped with singular metrics. In this broader setting, techniques from 
\(L^2\)-theory prove to be remarkably effective. In this survey, we review recent developments along these lines, drawing on the works \cite{JCMP, CP-inv, CDHP}. The applications discussed in the three final sections both illustrate the breadth of these advances and point toward promising directions for future investigation.  
	
	\medskip
	
	We next briefly explain the results and the organization of this article.
	
	\medskip
	
	In Section~1, we discuss a version of Hodge theory for line bundles equipped with singular metrics over compact Kähler manifolds. 
	Let \(X\) be a compact K\"ahler manifold, and let \(L \to X\) be a holomorphic line bundle. Assume that \(L\) admits a singular metric \(h_L\) such that the corresponding curvature current can be written as follows
	\begin{equation}\label{singmetrin}
		i\Theta_{h_L}(L)=\sum a_i [D_i]+\theta,
	\end{equation}
	where \(D:=\sum D_i\) is an SNC divisor, \(a_i\in \mathbb Q\), and \(\theta\) is a smooth form on \(X\).
	
	Let \(m\in\mathbb N\) be a sufficiently large and divisible integer. We consider the orbifold structure \((X, (1 -\frac{1}{m}) D)\) and a metric \(\omega_\cC\) adapted to it. In \cite{JCMP}, we developed a Hodge theory for \((X,L,h_L, \omega_\cC)\); see Theorem \ref{CoHo1}. This can be seen as a natural generalisation of the results obtained in the context of orbifolds
	(i.e. in the absence of the additional twisting with $L$). 
	One of the main applications of the theory is the crucial \(\ddbar\)-lemma in conic sense (established under natural assumptions on the form \(\theta\) above); see Theorem~\ref{conicddabr}.
	
	An important question in this framework is the following: assume that some \(L\)-valued form is \(\dbar\)-exact in conic sense. Under which conditions is this form \(\dbar\)-exact in the usual sense? We refer to Theorem~\ref{classsol} for the precise meaning of the question and a result in this direction.
	
	\medskip
	
	In Section~2, we generalize the above construction to the quasi-compact setting, following \cite{CDHP}. More precisely, let \(X\) be a compact K\"ahler manifold, and let \(D=\sum_{i=1}^k D_i\) be an SNC divisor on \(X\). We would like to study  the logarithmic holomorphic flat bundles \((L,\nabla)\) on $(X, D)$, namely the pairs consisting on holomorphic line bundles \(L\) on \(X\) together with a logarithmic holomorphic connection
	\begin{equation}\label{holcon}
		\nabla: L \to L\otimes \Omega_X^{1}(\log D),
	\end{equation}
	such that \(\nabla^2=0\) on \(X\setminus D\).
	
	One can always construct a harmonic metric \(h_L\) on \(L\), so that the equality \(i\Theta_{h_L}(L)=0\) holds on \(X\setminus D\). The metric \(h_L\) may have singularities along \(D\), but one can always assume that the Lelong numbers $\nu (L, h_L)$ of $h_L$ along \(D\) are rational.
	
	Let \(\Delta_2 \subset D\) be the union of components such that the equality
	$$\Res_{D_i}(L, \nabla) + \nu_{D_i} (L, h_L) \neq 0$$
holds.	 
In this setting the ambient metric on \(X\) we are working with will have conic singularities along \(D\setminus \Delta_2\) and Poincaré type singularities along \(\Delta_2\). Notice that
such a metric is complete near the boundary \(\Delta_2\), and also that the singularities are imposed by geometric/analytic considerations.
	
	With respect to this rather involved setting, our first task would be to define what is meant by a ``smooth form'' see Definition~\ref{smoothK}. This turns out to be a suitable notion, given that we obtain a strong Hodge decomposition Theorem~\ref{smoothhodge}. Moreover, the space we define provides a natural fine resolution of the complex induced by \eqref{holcon}, see Subsection \ref{resolutionsection}. 
	
	\medskip
	
	Section~3 is devoted to generalisation of some of the results of the previous two sections by replacing differential forms with currents. The idea of this generalisation is as follows.
	For many applications, we need to consider forms with logarithmic poles along some divisors \(E\), where \(E+D\) is snc. Thanks to an idea due to Noguchi~\cite{Nog95}, one can embed logarithmic forms into the dual space, namely the space of currents. We adapt this approach to the settings of Sections~1 and~2.
	
	A key point that needs to be established in this approach is a regularity lemma; see Subsection~\ref{regularitysection}. It asserts that if a logarithmic form is \(\dbar\)-exact modulo an order-zero current supported on \(E\), then it can be written as the \(\dbar\) of a form with logarithmic poles. Many ideas in this section originate from the pioneering works \cite{Nog95} and \cite{LRW}.
	
	\medskip
	
	In the remaining sections (4, 5 and 6) we discuss several applications of the theory established above, including the extension of pluricanonical forms, the jumping of cohomology locus property for rank-one local systems on quasi-compact Kähler manifolds, and the deformation theory of log Calabi--Yau manifolds. Finally, we would like to mention that in a forthcoming paper \cite{CDHP2}, we generalise some of the above constructions to higher-rank local systems over quasi-projective varieties.
	
	For most of the results in this survey we will not provide a complete proof, but rather refer to the original articles. Notable exceptions are 
Theorem \ref{ddbar01} and Theorem \ref{logddbar}, for which we supply new arguments, mainly because we feel that they are slightly simpler than the original ones.
	\bigskip

\textbf{Acknowledgements:} The author would like to thank the collaborators on the results presented in this survey---Ya Deng, Christopher D.~Hacon, and Mihai Paun---for many fruitful discussions on conic Hodge theory and the applications presented here. In particular, the author is grateful to Mihai Paun for extensive discussions and collaborations on the all papers and the project described in this survey, as well as for his numerous suggestions for this survey, which greatly improved the manuscript.

The author acknowledges the support of the French Agence Nationale de la Recherche (ANR) under reference ANR-21-CE40-0010 and thanks the Institut Universitaire de France for providing excellent working conditions.

\tableofcontents

\section*{Notations and conventions}
\begin{itemize}
	\item Throughout this paper, \( X \) denotes a compact K\"ahler manifold, \( D =\sum D_i \) is a simple normal crossing divisor on \( X \).
	
	\item $\omega_\cC$ is a K\"ahler metric on $X \setminus D$ of conic singularity along $D$, cf. \eqref{conicmetric}.
	
	\item Let $L$ be a holomorphic line bundle on $X$ with a metric $h_L$ satisfying \eqref{singmetrin}. 
	We denote by $\cC^\infty(X,L)$  the space of $L$-valued forms smooth in conic sense with respect to $(\omega_\cC, h_L)$, cf. Definition \ref{forms}.
	
	\item We denote by $\Omega^p _X(\log D)$ the sheaf of holomorphic forms with logarithmic poles along $D$. 
	
	\item We denote by $A^{p, q} (X, D, L)$ the space of smooth $L$-valued forms with logarithmic poles along $D$. 
	
	\item Let $D=\Delta_1 +\Delta_2$ be a simple normal crossing divisor on \( X \). In addition to \eqref{singmetrin}, we assume that the equality
		\begin{equation}
		i\Theta_{h_L}(L)=\sum_{D_i\subset \Delta_2} a_i [D_i]+\theta
	\end{equation}
holds, i.e. we only allow the metric $h_L$ to be singular along the components of $\Delta_2$.	
	We denote by $\cC^\infty (X, \Delta_1, L)$ the space of $L$-valued forms with logarithmic poles along $\Delta_1$ which are smooth in conic sense along $\Delta_2$, cf. Definition \ref{logconic}.
	
	\item A Hermitian metric on \( L|_{X\setminus D} \) is said to be \emph{harmonic} if it satisfies the conditions described in \ref{harmonicmetric}. Let \( D_K'' \), \( D_K' \), and \( D_K \) be the associated connections on \( L \) as defined in section \ref{logflatsubsection}.

	\item \( M_{\rm DR}(X/D) \) denotes the moduli space of pairs \( (L, \nabla) \), where \( L \) is a holomorphic line bundle on \( X \), and \( \nabla: L \to L \otimes \Omega_X(\log D) \) is a logarithmic integrable connection. \( M_{\rm B}(X/D) \) denotes the moduli space of rank one local system on $X\setminus D$.
	
	\item To each pair \( (L, \nabla)  \in M_{\rm DR}(X/D) \) we can associate a natural decomposition $$D=\Delta_1 +\Delta_2 +\Delta_3$$ of 
	the divisor $D$, cf. \eqref{decomp} (each of the components $\Delta_i$ have different geometric interpretations).
	
	\item The metric $\omega_D$ we construct in this setup will have mixed singularities: of Poincaré-type on $\Delta_2$ and of
	conic type on $\Delta_3$, cf. \eqref{m1}.
	
	\item  In the same setting, we denote by 
	\(\cC^\infty_{K}(X,L)\) and \(\cC^\infty_{K}(X,\Delta_1,L)\) for the space of "smooth forms" and "logarithmic smooth forms" on \(X\) introduced in Definition \ref{smoothK} and Defintion \ref{logKsmooth}. 
	
	\item $\mathcal{D} (X,L)$ is dual space of $\cC^\infty (X, L^\star)$ and \(\mathcal{D}_{K}(X,L)\) is the dual space of 	\(\cC^\infty_{K}(X,L^\star)\), cf. Definition \ref{currents} and Definition \ref{currentsK}.  

\end{itemize}

%%%%%%%%%%%%%%%%%%%%%%%%%%%%%%%%%%%%%%%%%%%%%%%%
%%%%%%%%%%%%%%%%%%%%%%%%%%%%%%%%%%%%%%%%%%%%%%%%
	
\section{Hodge theory for $L$-valued forms}\label{conicsection}

Let $(X,\omega_X)$ be a compact K\"ahler manifold, and let $L\to X$ be a holomorphic line bundle. Assume that $L$ admits a possibly singular metric $h_L$ such that
\begin{equation}\label{singmetr}
	i\Theta_{h_L}(L)=\sum a_i [D_i]+\theta,
\end{equation}
where $D:=\sum D_i$ is an snc divisor, $a_i\in \mathbb Q$, and $\theta$ is a smooth form on $X$.
In this section, we present a Hodge theory for $(X,L,h_L)$, following the constructions in \cite{JCMP}.

To begin with, we fix an integer $m\in \mathbb N$ which is sufficiently large and divisible; in particular we assume that $m a_i\in \mathbb Z$ holds for every $i$.
We will systematically use the $m$-ramified covers corresponding to the orbifold structure
\[
\Delta := \sum_i \Bigl(1-\frac{1}{m}\Bigr)D_i.
\]
More precisely, let $\{(V_i,z_i)\}_{i\in I}$ be a finite cover of $X$ by coordinate charts such that
\begin{equation}\label{deck2}
	z_i^1\cdots z_i^r=0
\end{equation}
is a local equation of the divisor $D$ on $V_i$.
We then consider the local $m$-ramified maps
\begin{equation}\label{deck3}
	\pi_i:U_i\to V_i,\qquad
	\pi_i(w_i^1,\dots,w_i^n):=
	\bigl((w_i^1)^m,\dots,(w_i^r)^m,w_i^{r+1},\dots,w_i^n\bigr),
\end{equation}
which define the orbifold structure associated with $(X,\Delta)$.
Let $\omega_{\cC}$ be a conic K\"ahler metric with respect to $(X,\Delta)$; namely, $\omega_{\cC}$ is a smooth K\"ahler metric on $X\setminus D$, and $\pi_i^\star \omega_{\cC}$ extends to a smooth K\"ahler metric on $U_i$ for every $i$.
Such a conic metric can be constructed as follows:
\begin{equation}\label{conicmetric}
	\omega_{\cC}:= C\,\omega_X+\sum_i \sqrt{-1}\,\ddbar |s_{D_i}|_{h_i}^{2/m},
\end{equation}
where $s_{D_i}$ is the canonical section of $\O_X(D_i)$ and $h_i$ is a smooth metric on $\O_X(D_i)$.
If the positive constant $C$ is sufficiently large, then $\omega_{\cC}$ is a conic K\"ahler metric.

\medskip

\noindent In the absence of the twisting bundle $L$, the meaningful objects on $(X,\Delta)$ are the so-called orbifold differential forms, cf. for exmaple \cite{Rai96}.
The corresponding notion in our setting here is as follows.

\begin{defn}\label{forms}
	Let $\phi$ be an $L$-valued smooth form on $X\setminus D$. We say that $\phi$ is \emph{smooth in the conic sense} if the local quotients
	\begin{equation}\label{deck4}
		\phi_i := \frac{1}{w_i^{am}}\,\pi_i^\star(\phi|_{V_i})
	\end{equation}
	extend smoothly to $U_i$.
	Here we use the shorthand
	\[
	w_i^{am}:=(w_i^1)^{a_1 m}\cdots (w_i^r)^{a_r m}.
	\]
	The space of $L$-valued forms smooth in the conic sense is denoted by $\cC^\infty(X,L)$. Note that the defintion  depends on the metrics $h_L$ and $\omega_\cC$.
\end{defn}

Notice that this agrees with the usual notion of \emph{orbifold differential forms} in case $(L, h_L)$ is trivial. The idea here is similar, except that we also take into account the singularities of the metric $h_L$ on $L$.

The following statement relates the intrinsic differential operators associated with $(X,\omega_{\cC})$ and $(L,h_L)$ to the corresponding local operators, defined via the local uniformisations $\pi_i$.

\begin{proposition}\label{orbderiv}\cite[Prop 2.17]{JCMP}
	Let $\phi$ be an $L$-valued $(p,q)$-form which is smooth in the orbifold sense.
	Then its ``natural'' derivatives $D'_{h_L}$, $D_{h_L}^\star$, etc.\ are also smooth in the orbifold sense. Moreover:
	\begin{enumerate}
		\smallskip
		\item[\rm (1)] $\sup_{X\setminus Y} |\phi|_{h_L,\omega_{\cC}}<\infty$; in particular, forms which are smooth in the conic sense are bounded.
		\smallskip
		\item[\rm (2)] The following identities hold:
		\[
		\pi_i^\star(D'_{h_L}\phi)= w_i^{am} D'\phi_i, \qquad
		\pi_i^\star(\dbar \phi)= w_i^{am} \dbar \phi_i,
		\]
		and
		\[
		\pi_i^\star\bigl({D'}_{h_L}^{\star}\phi\bigr)= w_i^{am} {D'}^\star \phi_i,\qquad
		\pi_i^\star(\dbar^\star \phi)= w_i^{am} \dbar^\star \phi_i,
		\]
		where $D'\psi_i:=\partial \psi_i - \partial(\varphi_{L,0}\circ \pi_i)\wedge \psi_i$ and
		$\varphi_L=\sum a_i \log|z_i|^2 + \varphi_{L,0}$.
		\item[\rm (3)] Let $\Delta'' := [\dbar,\dbar^\star]$ be the Laplacian with respect to $(\omega_{\cC},h_L)$. Then
		\[
		\pi_i^\star(\Delta'' \phi)= w_i^{am}\cdot \Delta''_{\mathrm{sm}}\,\phi_i,
		\]
		where $\Delta''_{\mathrm{sm}}$ is the Laplacian in the local nonsingular setting $(\pi_i^\star \omega_{\cC},\, \varphi_{L,0}\circ \pi_i)$.
	\end{enumerate}
\end{proposition}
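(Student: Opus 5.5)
The approach is to work locally on each chart $V_i$ and transport everything to the uniformisation $U_i$ via $\pi_i$. We trivialise $L$ on $V_i$ by a frame $e$ with $|e|^2_{h_L}=e^{-\varphi_L}$, where $\varphi_L=\sum a_j\log|z_i^j|^2+\varphi_{L,0}$ and $\varphi_{L,0}$ is smooth; this form of the weight is exactly \eqref{singmetr} restricted to $V_i$. The first observation is that $\pi_i^\star\varphi_L=\sum a_j m\log|w_i^j|^2+\varphi_{L,0}\circ\pi_i=\log|w_i^{am}|^2+\varphi_{L,0}\circ\pi_i$. Hence, writing $\pi_i^\star\phi=w_i^{am}\phi_i$, we get
\[
|\pi_i^\star\phi|^2_{\pi_i^\star h_L}=|\phi_i|^2 e^{-\varphi_{L,0}\circ\pi_i}.
\]
So the map $\phi\mapsto\phi_i$ is, on $U_i\setminus\pi_i^{-1}(D)$, an isometry from $(\pi_i^\star L,\pi_i^\star h_L)$ onto the trivial bundle with the \emph{smooth} weight $\varphi_{L,0}\circ\pi_i$. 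The underlying point is that the holomorphic multiplier $w_i^{am}$ is a holomorphic change of frame: the new frame $w_i^{am}e$ has smooth norm.

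Item (1) follows immediately from this isometry. The pointwise norm $|\phi|_{h_L,\omega_\cC}$ at $\pi_i(w)$ equals $|\phi_i|_{e^{-\varphi_{L,0}\circ\pi_i},\pi_i^\star\omega_\cC}(w)$, since $\pi_i$ is a local isometry off the branch locus when $U_i$ carries the metric $\pi_i^\star\omega_\cC$. That metric extends smoothly to $U_i$, and $\phi_i$ is smooth there by Definition~\ref{forms}, so the norm is bounded on compact subsets of $U_i$. Shrinking the charts and using finiteness of the cover gives the global bound.

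For (2), pullback commutes with $\dbar$ and $w_i^{am}$ is holomorphic, so $\pi_i^\star(\dbar\phi)=\dbar(w_i^{am}\phi_i)=w_i^{am}\dbar\phi_i$. For the $(1,0)$ part of the Chern connection we use $D'_{h_L}=\partial-\partial\varphi_L\wedge$ in the frame $e$. Then
\[
\partial(w_i^{am}\phi_i)-\partial\pi_i^\star\varphi_L\wedge w_i^{am}\phi_i
=w_i^{am}\bigl(\partial\phi_i+\tfrac{\partial w_i^{am}}{w_i^{am}}\wedge\phi_i-\tfrac{\partial w_i^{am}}{w_i^{am}}\wedge\phi_i-\partial(\varphi_{L,0}\circ\pi_i)\wedge\phi_i\bigr),
\]
and the logarithmic terms cancel, which gives $w_i^{am}D'\phi_i$. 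The adjoints are formal adjoints computed pointwise on $X\setminus D$, with respect to $\omega_\cC$ and $h_L$. By the isometry above, together with the fact that $\pi_i$ is a local isometry and a local biholomorphism off $\pi_i^{-1}(D)$, they correspond to the formal adjoints on $U_i$ for $(\pi_i^\star\omega_\cC,\varphi_{L,0}\circ\pi_i)$ after conjugation by multiplication with $w_i^{am}$. To check this, take test forms $u$ compactly supported in $U_i\setminus\pi_i^{-1}(D)$ and use invariance of the integral and the identities already proved for $\dbar$ and $D'$. This yields the formulas for $\dbar^\star$ and ${D'}^\star$ on the complement of the branch locus.

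The point requiring most care is then smoothness across $\pi_i^{-1}(D)$. The right-hand sides $D'\phi_i$, $\dbar^\star\phi_i$ and the rest are computed with a smooth metric and a smooth weight on all of $U_i$, so they extend smoothly; combined with the identities, this shows the derived forms are smooth in the conic sense. One should also verify that the definition is independent of the chart, that is, compatible on overlaps and under the deck group action. This holds because $w_i^{am}$ is a well-defined equivariant holomorphic function and $\phi_i$ is intrinsically defined. Item (3) then follows by composing the identities in (2):
\[
\pi_i^\star(\dbar\dbar^\star\phi)=w_i^{am}\dbar\dbar^\star\phi_i,
\]
and similarly for the other order, so that $\pi_i^\star(\Delta''\phi)=w_i^{am}\Delta''_{\mathrm{sm}}\phi_i$.
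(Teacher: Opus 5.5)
Your proposal is correct and is essentially the argument behind the cited \cite[Prop 2.17]{JCMP}; the survey itself gives no proof. The whole content is that $w_i^{am}\pi_i^\star e$ is a holomorphic frame of $\pi_i^\star L$ off $\pi_i^{-1}(D)$ with smooth weight $\varphi_{L,0}\circ\pi_i$, so $\dbar$, $D'_{h_L}$ and their formal adjoints are conjugated to the smooth local operators by multiplication with $w_i^{am}$, exactly as you compute. One small correction to your side remark on chart independence, which the statement does not actually require: $w_i^{am}$ is only meromorphic on $U_i$ when some $a_j<0$, not holomorphic. This does not affect the argument, since all your identities are verified on $U_i\setminus\pi_i^{-1}(D)$.
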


Note that, by our definition, given two forms $u$ and $v$ which are smooth in the conic sense,  
the equality
\[
\dbar u = v \qquad\text{ on } X
\]
holds in the conic sense means simply that $\dbar u = v$ on $X \setminus D$.
The following example/remark explains clearly an important difference between the usual $\dbar$ operator, and the one we introduce in the conic setting. 

\begin{exemple}

	Let
	\[
	A^{p,q}(X,D,L):= C^\infty_{(0,q)}\bigl(X,\Omega_X^p(\log D)\otimes L\bigr)
	\]
	be the space of $L$-valued smooth forms with logarithmic poles along $D$ in the classical sense, and let $s\in A^{p,q}(X,D,L)$.
	Assume that $a_i<0$ for every $i$ in \eqref{singmetr}. Then $s$ is smooth in the conic sense.
	
	For instance, suppose that $D_1$ is locally defined by $z_1=0$. Then $\displaystyle \frac{dz_1}{z_1}\big|_{X\setminus D}$ is smooth. Moreover, after the $m$-ramified cover $\pi$, we have
	\[
	w_1^{-a_1 m}\,\pi^\star\!\left(\frac{dz_1}{z_1}\right)
	= m\, w_1^{-a_1 m}\,\frac{dw_1}{w_1},
	\]
	and the right-hand side is smooth when $a_1<0$ (recall that $a_1 m\in \mathbb Z$). Hence $\displaystyle \frac{dz_1}{z_1}$ is smooth in conic sense.
	
	Moreover, we have $\displaystyle \dbar(\frac{dz_1}{z_1})=0$ in conic sense: indeed, the condition is that
	\[
	w_1\,\dbar\!\left(\frac{dw_1}{w_1}\Big|_{\DD^\star}\right) .
	\]
	extends across the disc, which is clearly the case. 
	 Notice that this is a bit confusing, since the form $\displaystyle \frac{dz_1}{z_1}$ is not $\dbar$-closed in the usual sense. Indeed, the equality
$$\dbar\!\left(\frac{dz_1}{z_1}\right) = -[z_1=0]$$
holds -up to some multiple.
\end{exemple}

\medskip

\begin{remark}
Let $u$ and $v$ be two $L$-valued forms, which are smooth in conic sense, and such that the equality $\dbar u= v$ holds pointwise in the complement of $D$. Then it is true that we have $\dbar u= v$ in conic sense, but as the previous example shows, this is not true in the usual sense.
\end{remark}

\medskip

We now list some basic properties of forms smooth in the conic sense. The following proposition shows that conic-smooth forms behave well with respect to integration by parts.

\begin{proposition}\cite[Cor 2.19]{JCMP}\label{int}
	Let $(L,h_L)$ be a holomorphic line bundle satisfying \eqref{singmetr}, and let $(L^\star,h_L^\star)$ be its dual endowed with the dual metric.
	Let $\alpha$ be an $L$-valued $(p,q)$-form and let $\beta$ be an $L^\star$-valued $(n-p-1,n-q)$-form. Assume that both $\alpha$ and $\beta$ are smooth in the conic sense.
	Then the usual integration by parts formula holds:
	\begin{equation}\label{van40}
		\int_X D'_{h_L}\alpha\wedge \beta
		= (-1)^{p+q+1}\int_X \alpha\wedge D'_{h_L^\star}\beta,
	\end{equation}
	where $D'_{h_L}$ and $D'_{h_L^\star}$ are understood in the conic sense.
\end{proposition}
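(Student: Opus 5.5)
The plan is to reduce \eqref{van40} to Stokes' theorem on the complement of small tubular neighbourhoods of $D$. The boundary terms are then computed on the local ramified covers $\pi_i$ of \eqref{deck3}. We first make both sides honest integrals. The wedge $\alpha\wedge\beta$ is scalar valued, since the $h_L$ and $h_L^\star$ factors cancel. Locally,
\[
\pi_i^\star(\alpha\wedge\beta)=w_i^{am}\alpha_i\wedge w_i^{-am}\beta_i=\alpha_i\wedge\beta_i ,
\]
because the quotient attached to $\beta$ by Definition \ref{forms} uses the weights $-a$ (the dual metric has curvature $-\sum a_i[D_i]-\theta$). Hence $\alpha\wedge\beta$ is an ordinary orbifold-smooth $(n-1,n)$-form, i.e.\ its pullbacks to the $U_i$ are smooth. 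By Proposition \ref{orbderiv}(2), the same holds for $D'_{h_L}\alpha\wedge\beta$ and $\alpha\wedge D'_{h_L^\star}\beta$. In particular all integrands are bounded with respect to $\omega_\cC$ (Proposition \ref{orbderiv}(1)), and $\omega_\cC$ has finite volume, so both integrals converge absolutely.

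Next we use the Leibniz identity on $X\setminus D$, where everything is smooth and $h_L$ is a genuine smooth metric. Since the Chern connections of $h_L$ and $h_L^\star$ are compatible with the duality pairing, we have
\[
d(\alpha\wedge\beta)=\partial(\alpha\wedge\beta)=D'_{h_L}\alpha\wedge\beta+(-1)^{p+q}\alpha\wedge D'_{h_L^\star}\beta
\]
pointwise on $X\setminus D$. The first equality holds because $\alpha\wedge\beta$ has bidegree $(n-1,n)$, so $\bar\partial(\alpha\wedge\beta)=0$. Thus \eqref{van40} is equivalent to $\int_{X\setminus D} d(\alpha\wedge\beta)=0$.

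Finally, take a partition of unity $\{\rho_i\}$ subordinate to $\{V_i\}$. It suffices to show $\int_{V_i\setminus D} d(\rho_i\alpha\wedge\beta)=0$. Pull back by $\pi_i$, which is a Galois cover of degree $m^r$ over $V_i\setminus D$. This gives
\[
\int_{V_i\setminus D} d(\rho_i\alpha\wedge\beta)=\frac1{m^r}\int_{U_i} d\bigl((\rho_i\circ\pi_i)\,\alpha_i\wedge\beta_i\bigr).
\]
Here $\pi_i^\star D$ has measure zero, and the form $(\rho_i\circ\pi_i)\,\alpha_i\wedge\beta_i$ is smooth and compactly supported on $U_i$. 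By Stokes the right-hand side vanishes. Summing over $i$ gives the claim.

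The main obstacle is the bookkeeping in the first step. We must check that the conic weight for $L^\star$ is exactly $w_i^{-am}$, and that the pulled-back Leibniz identity matches the local operators $D'$ of Proposition \ref{orbderiv}(2), with the two $\partial(\varphi_{L,0}\circ\pi_i)$ terms cancelling. This is what ensures no residual singular weight survives in $\pi_i^\star(\alpha\wedge\beta)$. Otherwise a boundary term along $D$ would appear, as in the Example with $dz_1/z_1$. Once $\alpha_i\wedge\beta_i$ is known to be smooth across $\{w=0\}$, the rest is routine.
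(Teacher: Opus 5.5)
Your proof is correct, but it takes a different route from the paper's.

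\textbf{The paper's route.} The paper never shows that $\alpha\wedge\beta$ is orbifold-smooth. It applies the ordinary integration by parts on $X\setminus D$ to $\mu_\ep\alpha$, where $\mu_\ep=\rho_\ep\bigl(\log\log\frac{1}{|s_D|_h^2}\bigr)$ is the log-log cut-off of \eqref{cutoff}. The extra term $\int_X\partial\mu_\ep\wedge\alpha\wedge\beta$ is then removed as $\ep\to0$. This uses only the boundedness of conic-smooth forms (Proposition \ref{orbderiv}(1)) and the integrability of $|\partial\mu_\ep|_{\omega_\cC}$, whose integral tends to $0$ as its support shrinks to $D$.

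\textbf{Your route.} You use the exact cancellation of the holomorphic monomials, $\pi_i^\star(\alpha\wedge\beta)=w_i^{am}\alpha_i\wedge w_i^{-am}\beta_i=\alpha_i\wedge\beta_i$, which rests on the dual weight for $(L^\star,h_L^\star)$ being $-a$. This makes $\alpha\wedge\beta$ a genuine orbifold form, so Stokes applies chart by chart after a partition of unity, with no limiting procedure. The verification you flag as the main obstacle, namely matching the $\partial(\varphi_{L,0}\circ\pi_i)$ terms, is not actually needed. You apply the Leibniz rule on $X\setminus D$, where $h_L$ is smooth, so the only bookkeeping required is the weight $w_i^{-am}$ for $L^\star$.

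\textbf{What each buys.} Your version is more elementary and exact, and it shows transparently why no boundary contribution can appear along $D$. However, it uses the full smooth-extension structure on the local uniformizations. The paper's cut-off argument uses much less: only pointwise bounds on $\alpha,\beta$ and $L^1$ control of $\partial\mu_\ep$. That is why it transfers essentially verbatim to settings without local uniformizations. Examples are the Poincaré-type singularities along $\Delta_2$ in Section \ref{logflat}, where $|\partial\mu_\ep|_{\omega_D}$ is uniformly bounded and the volume is finite, and $L^2$-type forms in the domains of the operators.
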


For the proof of Proposition \ref{int}, we consider  the standard cut-off functions $\mu_\ep$ for $X\setminus D$, namely
\begin{equation}\label{cutoff}
\mu_\ep = \rho_\ep\!\left(\log\log\frac{1}{|s_D|_h^2}\right),
\end{equation}
with $s_D$ the canonical section of $\O_X(D)$, $h$ a smooth metric, and $\rho_\ep$ a function equal to $1$ on $[1,\ep^{-1}]$ and equal to $0$ on $[1+\ep^{-1},+\infty[$, with $|\rho_\ep'|\le 2$.  As the smooth forms in conic sense are smooth on $X\setminus D$ in the classical sense, we have 
$$	\int_X D'_{h_L} (\mu_\ep \cdot \alpha)\wedge \beta
= (-1)^{p+q+1}\int_X \mu_\ep \cdot \alpha\wedge D'_{h_L^\star}\beta .$$
Together with the fact that $\int_X |\partial \mu_\ep|_{\omega_\cC} \omega_\cC ^n < +\infty$, by letting $\ep \to 0$, we obtain \eqref{van40}.

\medskip

The standard G{\aa}rding and Sobolev inequalities, together with the Rellich embedding theorem, still hold in the conic setting; they can be deduced from their classical counterparts via local uniformizations of $(X,\Delta)$.
As a consequence, the Hodge decomposition also holds in the conic setting.

\begin{thm}\label{CoHo1}\cite[Thm 2.28]{JCMP}
	Let $(L,h_L)\to X$ be a line bundle endowed with a metric $h_L$ satisfying \eqref{singmetr}, and let $\omega_{\cC}$ be a K\"ahler metric with conic singularities as in \eqref{conicmetric}.
	Then we have the Hodge decompositions
	\begin{equation}\label{pave29111}
		L^2_{p,q}(X,L)=\Ker \Delta''_{h_L}\oplus \Imm \dbar \oplus \Imm \dbar^\star
	\end{equation}
	and
	\begin{equation}\label{pave291}
		\cC^\infty_{p,q}\bigl(X,(L,h_L)\bigr)
		= \Ker \Delta''_{h_L}\oplus \Imm \Delta''_{h_L}.
	\end{equation}
	Here all differential operators are understood in the conic sense.
\end{thm}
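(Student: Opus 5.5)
The plan is to reduce the conic Hodge theory to the classical elliptic theory on a compact manifold by working on the local uniformizations $\pi_i:U_i\to V_i$, and then run the standard Hilbert-space argument. The basic device is the correspondence $\phi\mapsto \phi_i=w_i^{-am}\pi_i^\star\phi$ of \eqref{deck4}. By Proposition \ref{orbderiv}(3), under this correspondence $\Delta''_{h_L}$ becomes the ordinary elliptic Laplacian $\Delta''_{\rm sm}$ attached to the smooth data $(\pi_i^\star\omega_\cC,\varphi_{L,0}\circ\pi_i)$. We also need the pointwise norm relation $|\phi|^2_{h_L,\omega_\cC}\circ\pi_i=|\phi_i|^2_{e^{-\varphi_{L,0}\circ\pi_i},\pi_i^\star\omega_\cC}$: the weight $|w^{am}|^2$ exactly cancels $e^{-\sum a_i\log|z_i|^2}$. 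Integrating over the fundamental domain of the Galois group $(\mathbb Z/m)^r$, this shows that $L^2$-norms of conic forms agree, up to the factor $m^{r}$, with $L^2$-norms of the $(\mathbb Z/m)^r$-equivariant lifts. The equivariance here is twisted by the character given by $w^{am}$.

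First, I would define the conic Sobolev spaces $W^k$ via a partition of unity $\{\chi_i\}$ pulled back to the $U_i$, using the $W^k$-norms of $(\chi_i\phi)_i$. The key steps are as follows.

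\begin{enumerate}
\item \emph{G\aa rding inequality.} $\|\phi\|_{W^{k+2}}\le C(\|\Delta''\phi\|_{W^k}+\|\phi\|_{L^2})$ follows from the classical inequality for $\Delta''_{\rm sm}$ on each $U_i$. The commutators with $\chi_i$ are lower order.
\item \emph{Rellich compactness.} The embedding $W^{k+1}\hookrightarrow W^k$ is compact, because each $U_i$ is relatively compact.
\item \emph{Elliptic regularity.} If $\phi\in L^2$ and $\Delta''\phi$ is conic smooth weakly, then the lifts are weak solutions of an elliptic equation with smooth coefficients on $U_i$. So $\phi_i$ is smooth, and $\phi\in\cC^\infty(X,L)$.
\end{enumerate}

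Here the weak formulation must be tested against conic smooth forms, which is legitimate by Proposition \ref{int} and the cut-off argument preceding it. That argument also shows that $\dbar^\star$ (conic sense) is the Hilbert adjoint of $\dbar$, and that $\Delta''$ is formally self-adjoint on $\cC^\infty$.

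With these in hand, I will show that $\Delta''$, viewed as an unbounded self-adjoint operator on $L^2_{p,q}(X,L)$ with domain $W^2$, has compact resolvent. Then $\Ker\Delta''$ is finite dimensional, and $\Imm\Delta''$ is closed with orthogonal complement $\Ker\Delta''$. Combining this with regularity gives \eqref{pave291}: a conic smooth $\phi$ decomposes as $H\phi+\Delta''G\phi$ with $G\phi$ conic smooth. For \eqref{pave29111}, the identity $\langle\Delta''u,u\rangle=\|\dbar u\|^2+\|\dbar^\star u\|^2$ on conic smooth forms gives $\Ker\Delta''=\Ker\dbar\cap\Ker\dbar^\star$, and $\Imm\dbar\perp\Imm\dbar^\star$ because $\dbar^2=0$ in conic sense. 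Writing $\Delta''G=\dbar\dbar^\star G+\dbar^\star\dbar G$ then yields the three-term splitting on the dense subspace $\cC^\infty$, and closedness of the ranges passes it to $L^2$.

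The main obstacle is the compatibility step underlying the reduction: showing that the domain of the closure of $\dbar$ (respectively $\dbar^\star$) in $L^2$ coincides with the conic Sobolev domain. Concretely, we must show that no extra $L^2$ solutions or boundary contributions arise along $D$, where $\omega_\cC$ is incomplete. I would handle this by density. Conic smooth forms are dense in the graph norm, and any $L^2$ form corresponds to an equivariant $L^2$ form upstairs, where smoothing by equivariant mollification on $U_i$ is available. The cut-off functions $\mu_\ep$ of \eqref{cutoff}, with $\int|\partial\mu_\ep|\,\omega_\cC^n\to0$, then ensure that the minimal and maximal extensions agree. This is the point where the precise orbifold weights $w^{am}$ must be checked carefully.
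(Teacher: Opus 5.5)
Your proposal is correct and is essentially the paper's argument. The paper likewise deduces the conic G{\aa}rding and Sobolev inequalities and the Rellich embedding from their classical counterparts via the local uniformizations $\pi_i$ (cf.\ Proposition~\ref{orbderiv}), then concludes by the standard elliptic argument, referring to \cite{JCMP} for details. One small point to tighten in your density step: when you pair $\dbar\mu_\ep$ against a general $L^2$ form rather than a bounded conic-smooth one, you need $\|\dbar\mu_\ep\|_{L^2(\omega_\cC)}$ to stay bounded, not just the $L^1$ bound you quote (it in fact tends to $0$, by the Euclidean computation on the charts $U_i$).
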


\medskip

Recall that one important method for solving the $\dbar$-equation and studying the deformation space is the $\ddbar$-lemma. The key point is that, unlike the case of $L^2$ estimates, we do not need strict positivity in order to solve the $\dbar$-equation. See Sections~\ref{extenpluri}, ~\ref{jumping}  and~\ref{logCY}  for details on this idea. 
\smallskip
 
As a consequence of Theorem~\ref{CoHo1}, we have the following conic version of the $\ddbar$-lemma.

\begin{cor}\label{conicddabr}
	Let $(L,h_L)\to X$ be a holomorphic line bundle endowed with a metric $h_L$ satisfying \eqref{singmetr}, and let $u$ be an $L$-valued $(p,q)$-form which is smooth in conic sense.
	\begin{itemize}
		\item If $\theta=0$ and $u\in \ker \dbar \cap \im D'_{h_L}$, then the equation
		\begin{equation}\label{ddbarconic}
		u=\dbar\circ D'_{h_L} v
		\end{equation}
		admits a solution $v$ which is moreover smooth in conic sense. 
		\item In general, if $\theta\ge 0$, let $u$ be an $L$-valued $(n,q)$-form smooth in conic sense, where $n=\dim X$.
		If $\dbar u=0$ in the conic sense and
		\[
		u = D'_{h_L} v_1 + \theta\wedge v_2
		\]
		in conic sense for some $v_1,v_2\in \cC^\infty(X,L)$, then $u=\dbar v$ in the conic sense for some $v\in \cC^\infty(X,L)$ which is orthogonal to the harmonic part $\ker \Delta''_{h_L}$.
	\end{itemize}
\end{cor}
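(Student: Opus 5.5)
The plan is to run the classical Kähler-identity proof of the $\ddbar$-lemma inside the conic Hodge theory of Theorem~\ref{CoHo1}. Proposition~\ref{orbderiv} lets us compute locally on the uniformisations $U_i$, where $\pi_i^\star\omega_\cC$ and the weight $\varphi_{L,0}\circ\pi_i$ are smooth. There the usual Bochner--Kodaira--Nakano identity holds:
\[
\Delta''_{h_L}=\Delta'_{h_L}+[i\Theta_{h_L}(L),\Lambda].
\]
On $X\setminus D$ the curvature is $\theta$, so conic-smooth forms satisfy $\Delta''=\Delta'+[\theta,\Lambda]$, together with the commutation relations $[\Lambda,\dbar]=-i{D'}^\star$ and $[\Lambda,D']=i\dbar^\star$. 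The integration by parts of Proposition~\ref{int} makes all the adjoint relations legitimate on $\cC^\infty(X,L)$.

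\emph{First item ($\theta=0$).} Here $\Delta''=\Delta'$, so $\ker\Delta''=\ker\Delta'$. Also $\dbar$ and $D'$ anticommute, and each anticommutes with the other's adjoint, hence they commute with both Laplacians and with the Green operator $G$ of $\Delta''$, which preserves conic smoothness by \eqref{pave291}.

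Write $u=D'w$. A harmonic form $h$ satisfies $D'^\star h=0$, so $u\perp\ker\Delta''$ and \eqref{pave291} gives $u=\Delta'' Gu=\dbar\dbar^\star Gu$, using $\dbar Gu=G\dbar u=0$. Put $\gamma=\dbar^\star Gu$. Then $D'\gamma=-\dbar^\star D'Gu=-\dbar^\star G D'u=0$. Moreover $\gamma\perp\ker\Delta'$, since it lies in $\im\dbar^\star$. Hence
\[
\gamma=\Delta'G\gamma=D'D'^\star G\gamma,
\]
and $u=\dbar D'(D'^\star G\gamma)$. So $v=D'^\star G\gamma$ solves \eqref{ddbarconic}, and it is conic smooth.

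\emph{Second item.} Since $u$ has bidegree $(n,q)$ and $\dbar u=0$, it suffices to show that $u$ is orthogonal to $\ker\Delta''_{h_L}$. Then \eqref{pave291} gives $u=\Delta''Gu=\dbar(\dbar^\star Gu)+\dbar^\star\dbar Gu$, and the last term vanishes because $\dbar^\star\dbar Gu$ is both $\dbar$-closed and in $\im\dbar^\star$. Thus $v=\dbar^\star Gu$ works, and it lies in $\im\dbar^\star$, hence is orthogonal to harmonic forms.

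So let $h$ be harmonic. Integrating $\langle\Delta''h,h\rangle$ by parts (Proposition~\ref{int}) gives
\[
0=\|D'h\|^2+\|D'^\star h\|^2+\langle[\theta,\Lambda]h,h\rangle.
\]
For $(n,q)$-forms and $\theta\ge0$, the curvature term is nonnegative, so $D'^\star h=0$ and $\langle[\theta,\Lambda]h,h\rangle=0$. The latter is a pointwise nonnegative density, so it vanishes pointwise. By the positive semidefiniteness of $[\theta,\Lambda]$ in bidegree $(n,q)$, this gives $[\theta,\Lambda]h=0$, and since $\Lambda h=0$ for top holomorphic degree, $\Lambda\theta h=0$.

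Now $\langle D'v_1,h\rangle=\langle v_1,D'^\star h\rangle=0$. For the other term, $\langle\theta\wedge v_2,h\rangle=\langle v_2,\Lambda_\theta h\rangle$, where $\Lambda_\theta$ is the adjoint of $\theta\wedge\cdot$. I must check that $\Lambda_\theta h=0$, where $v_2$ is of type $(n-1,q-1)$. Diagonalising $\theta=i\sum\lambda_j dz_j\wedge d\bar z_j$ with $\lambda_j\ge0$, one has $\langle[\theta,\Lambda]h,h\rangle=\sum_{J}\big(\sum_{j\in J}\lambda_j\big)|h_J|^2$. Its vanishing forces $\lambda_jh_J=0$ for $j\in J$, and this is exactly $\Lambda_\theta h=0$. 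Hence $u\perp\ker\Delta''$.

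The main obstacle is justifying the Bochner identity and the integrations by parts for conic-smooth forms with the weights $w_i^{am}$. I would handle this through Proposition~\ref{orbderiv}(2)--(3): the twisting factor $w_i^{am}$ intertwines all the operators with their smooth local counterparts, and the cut-off argument used for Proposition~\ref{int} removes the boundary terms.
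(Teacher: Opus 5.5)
Your proposal is correct and follows the route the paper indicates. The first item is the classical Green-operator proof of the $\ddbar$-lemma, run inside the conic decomposition of Theorem~\ref{CoHo1} and using $\Delta''=\Delta'$ off $D$ when $\theta=0$. The second uses the conic Bochner identity to show that a harmonic $(n,q)$-form $h$ satisfies ${D'_{h_L}}^{\star}h=0$ and $\Lambda_\theta h=0$, hence $u\perp\ker\Delta''_{h_L}$, as in \cite[Prop.~2.21]{JCMP}.

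One slip does no harm: for $(n,q)$-forms it is $\theta\wedge h$ that vanishes, so $[\theta,\Lambda]h=\theta\wedge\Lambda h$, whereas $\Lambda h$ itself need not vanish. Your diagonalisation step yields $\Lambda_\theta h=0$ directly, so nothing depends on that sentence.
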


The first statement follows directly from Theorem~\ref{CoHo1} and the proof of the classical $\ddbar$-lemma. The second follows from Theorem~\ref{CoHo1} and the Bochner identity in the conic sense; see \cite[Prop.~2.21]{JCMP}. 

\begin{remark} In \eqref{ddbarconic}, the equality only holds in conic sense, and as we have seen, this is not necessarily equivalent to the classical sense. It is therefore important to know when $u$ is $\ddbar$-exact in the usual sense. We refer to Theorem~\ref{classsol} for a result in this direction.
	\end{remark}

\medskip

Motivated by the applications we will present at the end of this survey, we next introduce conic-smooth forms with logarithmic poles.

\begin{defn}\label{logconic}
	Let $(X,\omega_X)$ be a compact K\"ahler manifold and let $D=\Delta_1+\Delta_2$ be an snc divisor on $X$.
	Let $L\to X$ be a holomorphic line bundle. Assume that $L$ admits a possibly singular metric $h_L$ such that
	\[
	i\Theta_{h_L}(L)=\sum_{D_i\subset \Delta_2} a_i [D_i]+\theta,
	\]
	where $a_i\in \mathbb Q$ and $\theta$ is a smooth form on $X$.
	Let $\omega_{\cC}$ be a conic K\"ahler metric with respect to the conic structure $(1-\frac{1}{m})\Delta_2$, and let $\pi_i:U_i\to V_i$ be the $m$-ramified cover along $\Delta_2$.
	
	Let $\phi$ be an $L$-valued smooth form on $X\setminus D$. We say that $\phi$ has \emph{logarithmic poles along $\Delta_1$ and is smooth in the conic sense} if
	\[
	\frac{\pi_i^\star \phi}{w_i^{am}}
	\]
	extends to a smooth form on $U_i$ with at most logarithmic poles along the inverse image of $\Delta_1$.
	We denote by $\cC^\infty(X,\Delta_1,L)$ the space of $L$-valued smooth conic forms with logarithmic poles along $\Delta_1$.
\end{defn}

We can establish a version of Hodge theory for $\cC^\infty(X,\Delta_1,L)$ as follows.
On the logarithmic tangent bundle $T_X\langle \Delta_1\rangle = (\Omega^1 _X ( \log \Delta_1))^\star$, we consider the Hermitian metric
\begin{equation}\label{log2}
	g_D:= \omega_{\cC} + \sqrt{-1}\sum_{D_i\subset \Delta_1}
	\frac{D's_{D_i}\wedge \ol{D's_{D_i}}}{|s_{D_i}|^2} ,
\end{equation}
where $s_{D_i}$ is the canonical section of $\mathcal{O}_X (D_i)$.
Then $g_D$ has conic singularities along $\Delta_2$ and we can use the local generators $(\frac{ds_{D_i}}{s_{D_i}})^\star$ for $D_i\subset \Delta_1$ in order to define an almost orthonormal basis on $X\setminus \Delta_2$.
Notice that $g_D$ (twisted with the metric $h_L$ of $L$) can be interpreted as \emph{singular Hermitian metric} on the bundle
\[
E_p:=\Omega_X^p(\log \Delta_1)\otimes L.
\]

Let $\pi_i: U_i \to V_i$ be the local $m$-ramified cover in \eqref{deck3}. 
For an $E_p$-valued $(0,q)$-form $\phi$,  $\pi_i ^* \phi$ can be seen as $\pi^\star _i L$-valued forms on $U_i \setminus (\pi_i)^{-1} (D)$. 
We say that $\phi$ is smooth in the conic sense if the $\pi^\star _i L$-valuded form
\[
\frac{\pi_i^\star \phi}{w_i^{am}} \qquad\text{ on } U_i \setminus (\pi_i)^{-1} (D)
\]
extends to a smooth form on $U_i$ with at most logarithmic poles along the inverse image of $\Delta_1$.
We denote by $\cC^\infty_{(0,q)}(X,E_p)$ the space of such $E_p$-valued $(0,q)$-forms.
Then we have a natural identification
\begin{equation}\label{identifi}
	\mathcal{I}:\cC^\infty_{(p,q)}(X,\Delta_1,L)\longrightarrow \cC^\infty_{(0,q)}(X,E_p),
\end{equation}
and we see next that the analogue of Theorem~\ref{CoHo1} holds for $(E_p,g_D)$ (where the ambient metric on $X$ is the same $\omega_{\cC}$).

\begin{thm}\label{CoHo2}
	We have the Hodge decomposition
	\[
	\cC^\infty_{(0,q)}(X,E_p)
	= \Ker \Delta''_{g_D,\omega_{\cC}} \oplus \Imm \Delta''_{g_D,\omega_{\cC}}.
	\]
	Here all differential operators are understood in the conic sense.
\end{thm}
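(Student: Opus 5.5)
The plan is to reduce Theorem~\ref{CoHo2} to the same elliptic machinery that gives Theorem~\ref{CoHo1}: local uniformisation by the $m$-ramified covers $\pi_i$, then G{\aa}rding, Sobolev and Rellich on the covers, and finally a global functional-analytic argument for the self-adjoint operator $\Delta''_{g_D,\omega_\cC}$.

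The first step is to check that $(E_p,g_D)$ is, after pulling back by $\pi_i$ and dividing by $w_i^{am}$, a \emph{smooth} Hermitian bundle on $U_i$. For this, note that $\Delta_1$ is disjoint from the branching locus in the sense that $\pi_i$ ramifies only along $\Delta_2$. Near a point of $\Delta_1$, the logarithmic frame $\frac{dz_j}{z_j}$ ($D_j\subset\Delta_1$), $dz_k$ is a holomorphic frame of $\Omega^1_X(\log\Delta_1)$. By \eqref{log2}, the dual metric of $g_D$ in this frame is a smooth positive definite matrix, because the added term $\frac{D's_{D_i}\wedge\overline{D's_{D_i}}}{|s_{D_i}|^2}$ is exactly what makes $(ds_{D_i}/s_{D_i})^\star$ have bounded, nondegenerate length. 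Near $\Delta_2$, $g_D$ is just the conic $\omega_\cC$ (up to smooth terms from $\Delta_1$). Hence on $U_i$ the pullback $\pi_i^\star E_p$, with frame given by pulled-back log forms and metric $\pi_i^\star g_D\otimes e^{-\varphi_{L,0}\circ\pi_i}$, is a smooth Hermitian holomorphic bundle, and the ambient metric $\pi_i^\star\omega_\cC$ is smooth Kähler. Through the identification \eqref{identifi}, an element of $\cC^\infty_{(0,q)}(X,E_p)$ is precisely a form whose local quotients are smooth sections of this bundle. I then establish the analogue of Proposition~\ref{orbderiv}. The operators $\dbar$, $\dbar^\star$ and $\Delta''$ for $(E_p,g_D,\omega_\cC)$ commute with $\pi_i^\star$ up to the factor $w_i^{am}$, acting as the smooth local operators on $U_i$. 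The computation is the same as in \cite{JCMP}, since the factor $w_i^{am}$ is holomorphic and $\dbar$ only sees $\varphi_{L,0}$. Consequently $\Delta''_{g_D,\omega_\cC}$ is locally a smooth second-order elliptic operator on $U_i$, equivariant under the deck group.

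Next I prove integration by parts, i.e.\ that $\dbar^\star$ is the formal adjoint on conic-smooth sections. Following the proof of Proposition~\ref{int}, I insert the cut-off $\mu_\ep$ of \eqref{cutoff} with respect to $\Delta_2$ only and let $\ep\to0$. This uses that conic-smooth sections are bounded in $g_D$, which holds because of the first step, and that $\int|\dbar\mu_\ep|\,dV_{\omega_\cC}\to0$. With this in hand, the G{\aa}rding inequality, Sobolev embedding and Rellich compactness hold on the conic $L^2$ and Sobolev spaces, defined via the charts $U_i$ and invariant sections. They follow by summing the classical local estimates with a partition of unity.

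The final step is standard. Rellich makes $\Delta''$ have compact resolvent, which gives $L^2=\Ker\Delta''\oplus\overline{\Imm\Delta''}$ with closed range and a Green operator. Elliptic regularity, applied locally on $U_i$ to invariant sections, shows that the Green operator preserves $\cC^\infty_{(0,q)}(X,E_p)$, and this yields the decomposition. The main obstacle is the first step: verifying that $g_D$ really becomes smooth, and not merely quasi-isometric, after uniformisation. The delicate points are the corners where $\Delta_1$ and $\Delta_2$ meet, and the interaction of $D's_{D_i}$ with the conic factor. I would handle this by choosing $h_i$ flat in local charts, so that the added term in \eqref{log2} is exactly $|dz_j/z_j|^2$, which is smooth in the log frame. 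The general case then differs only by smooth bounded terms.
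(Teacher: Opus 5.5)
Your proposal follows the paper's proof. You uniformise by $\pi_i$, observe that $\pi_i^\star g_D$ becomes a smooth Hermitian metric (which is exactly why $g_D$ is built from the conic $\omega_{\cC}$), and then run G{\aa}rding, Sobolev and Rellich as in Theorem~\ref{CoHo1}; your integration-by-parts and elliptic-regularity steps only fill in details the paper leaves to the reader.

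One correction to your first step. The smooth bundle on $U_i$ is $\Omega^p_{U_i}(\log \pi_i^{-1}(\Delta_1))\otimes\pi_i^\star L$ with frame $dw_k$, $dw_j/w_j$, as the paper states. It is not the vector-bundle pullback $\pi_i^\star E_p$ in the pulled-back frame: since $\pi_i^\star dz_k=m\,w_k^{m-1}dw_k$ in the branched directions, that frame and its metric degenerate along $\pi_i^{-1}(\Delta_2)$. Also, the local quotients $\pi_i^\star\phi/w_i^{am}$ transform by a character of the deck group rather than being invariant, which does not affect the elliptic argument.
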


\begin{proof}
	The arguments are virtually the same as in Theorem \ref{CoHo1}. Via the local $m$-ramified cover the inverse image 
	$$\frac{\pi_i^\star \phi}{w_i^{am}}$$
	of a form $\phi$ becomes a smooth form on $U_i$ with at most logarithmic poles along the inverse image of $\Delta_1$, namely a section of 
	$$\frac{\pi_i^\star \phi}{w_i^{am}} \in C^\infty (U_i, \Omega^p _{U_i} (\log \pi_i ^{-1}(\Delta_1)) \otimes L).$$ Note that $(\pi_i)^\star g_D$ is a smooth hermitian metric on $ \Omega^p _{U_i} (\log \pi_i ^{-1}(\Delta_1))$: this is the main reason for which we impose the conic singularities in the expression of $g_D$.  Then the standard G{\aa}rding, Sobolev inequalities, and the Rellich embedding theorem still hold in the this setting. From this point on, it is fairly easy to complete the details for the proof of Theorem \ref{CoHo2}.
	
	\end{proof}

Note that in general the metric $g_D$ is not flat. Therefore we do not have at our disposal a $\ddbar$-lemma for $\cC^\infty_{(0,q)}(X,E_p)$ even when $\theta=0$. 
This is one of the main motivation for establishing a version of Hodge decomposition theorem for currents, discussed in Section \ref{currentsect}.

\medskip

Given a form $u$ which is $\dbar$-exact in the conic sense, it is important to understand when $u$ is also $\dbar$-exact in the classical sense. 
We have the following result in this direction.

\begin{thm}\label{classsol}
	Let $X$ be a compact K\"ahler manifold and let $D=\Delta_1+\Delta_2$ be an snc divisor on $X$.
	Let $L\to X$ be a holomorphic line bundle endowed with a metric $h_L$ such that
	\[
	i\Theta_{h_L}(L)=\sum_{D_i\subset \Delta_2} a_i [D_i]+\theta,
	\]
	where $a_i\in [-1,0[\cap \mathbb Q$, $D=\sum D_i$ is snc, and $\theta$ is a smooth form.
	Let $A^{p,q}(X,D,L)$  be space of $L$-valued smooth form with (at most) logarithmic poles along $D$ in the classical sense.
	
	Let $u\in A^{p,q}(X,D,L)$. Since $a_i<0$, we have $u\in \cC^\infty_{(p,q)}(X,\Delta_1,L)$.
	Assume that $u$ is $\dbar$-exact in the conic sense, i.e.\
	\[
	u=\dbar v_1 \qquad \text{on } X\setminus \Delta_1
	\]
	in the conic sense for some $v_1\in \cC^\infty_{(p,q-1)}(X,\Delta_1,L)$.
	Then there exists $v\in A^{p,q-1}(X,D,L)$ such that
	\[
	u=\dbar v \qquad \text{on } X\setminus D.
	\]
\end{thm}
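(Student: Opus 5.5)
The plan is to turn the question into a comparison of sheaf cohomologies. Both spaces of forms are fine resolutions: $A^{p,\bullet}(X,D,L)$ with the classical $\dbar$ (computed on $X\setminus D$) resolves $\Omega^p_X(\log D)\otimes L$, and the conic log forms $\cC^\infty_{(p,\bullet)}(X,\Delta_1,L)$ with the conic $\dbar$ resolve a sheaf $\mathcal S^p$. The claim then becomes the injectivity of the natural map $H^q(X,\Omega^p_X(\log D)\otimes L)\to H^q(X,\mathcal S^p)$.

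First I check that $u$ is $\dbar$-closed in $A^{p,\bullet}(X,D,L)$. Since $u=\dbar v_1$ on $X\setminus D$, we get $\dbar u=0$ there, and this is exactly closedness in the logarithmic complex, because its differential is the $\dbar$ computed off $D$. So $u$ defines a class $[u]\in H^q(X,\Omega^p_X(\log D)\otimes L)$ via the Dolbeault resolution by smooth log forms. This resolution is fine and exact by the local $\dbar$-Poincaré lemma for log forms, i.e. the usual Dolbeault lemma for the locally free sheaf $\Omega^p_X(\log D)\otimes L$.

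Next I identify $\mathcal S^p$, the kernel of the conic $\dbar$ in degree $(p,0)$, and prove the conic Dolbeault lemma. On a chart $V_i$, a section of $\mathcal S^p$ is a holomorphic $L$-valued $p$-form $\phi$ on $V_i\setminus D$ such that $\pi_i^\star\phi=w_i^{am}g$, with $g$ holomorphic with log poles along $\pi_i^{-1}(\Delta_1)$. For the conic Poincaré lemma I pass to $U_i$, write a $\dbar$-closed conic form as $w_i^{am}\psi$, and solve $\dbar\eta=\psi$ on $U_i$ using the smooth theory for $\Omega^p_{U_i}(\log\pi_i^{-1}\Delta_1)$. Averaging $w_i^{am}\eta$ over the Galois group $(\mathbb Z/m)^r$ then gives an invariant solution, since $\pi_i^\star v_1$ is invariant and $\dbar$ commutes with the action. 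Hence $\cC^\infty_{(p,\bullet)}(X,\Delta_1,L)$ computes $H^\bullet(X,\mathcal S^p)$. The hypothesis $a_i<0$, i.e. $-a_im\ge 1$, is exactly what makes $w^{-a_im}\,dw/w$ holomorphic, so the inclusion $\iota:\Omega^p_X(\log D)\otimes L\hookrightarrow \mathcal S^p$ of the statement is a morphism of resolved sheaves. This map is compatible with the resolutions, and the hypothesis says $\iota_\ast[u]=0$.

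The main step, and the one I expect to be the obstacle, is the injectivity of $\iota_\ast$ on $H^q$. I would compute $\mathcal S^p$ by Galois-invariant monomials. A monomial $z^\alpha$ times a log frame lies in $\mathcal S^p$ iff the resulting exponents $m\alpha_j-a_jm$ are $\ge 0$ along $\Delta_2$, i.e. $\alpha_j\ge a_j\ge -1$. For $a_j\in\,]-1,0[$ this forces $\alpha_j\ge 0$, so $\mathcal S^p=\Omega^p_X(\log D)\otimes L$ near such components. At components with $a_j=-1$, however, $\mathcal S^p$ contains extra terms such as $z_j^{-1}\cdot(\text{non-log frame})$. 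My plan is to exhibit a splitting $\mathcal S^p=\Omega^p_X(\log D)\otimes L\oplus\mathcal Q$, where $\mathcal Q$ is spanned by the terms that contain no $dz_j/z_j$ factor but carry the pole $z_j^{-1}$. I then need to check that this decomposition is preserved by the holomorphic changes of coordinates adapted to $D$ and by the transition functions of $L$; equivalently, that the quotient is canonically split by a residue-type map onto $D_j$. If the splitting glues, $\iota_\ast$ is split injective, so $[u]=0$. That gives $v\in A^{p,q-1}(X,D,L)$ with $u=\dbar v$ on $X\setminus D$.

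If the gluing fails, I would fall back on a direct construction. I would project the conic primitive $v_1$ chartwise onto its "classical" Galois-isotypic part, glue with a partition of unity, and correct the resulting error term by a further conic-to-classical $\dbar$-solving argument on the lower-degree pieces. This is essentially a descending induction on $q$ along the Čech–Dolbeault double complex.
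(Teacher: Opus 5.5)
\textbf{There is a genuine gap at the splitting step, and it cannot be filled, because the statement is false once some $a_i=-1$.}

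Your reduction is sound. $A^{p,\bullet}(X,D,L)$ and $\cC^\infty_{(p,\bullet)}(X,\Delta_1,L)$ are both fine resolutions; your Galois-averaging proof of the conic Dolbeault lemma works. The inclusion of complexes lies over $\iota:\Omega^p_X(\log D)\otimes L\hookrightarrow\mathcal S^p$, so the theorem is equivalent to injectivity of $\iota_*$ on $H^q$.

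\emph{Why no splitting exists.} Near a component $D_j$ with $a_j=-1$ (away from the other components) one has $\mathcal S^p=z_j^{-1}\Omega^p_X\otimes L$. Note a small correction to your monomial count: for a term $z_j^{\alpha}\frac{dz_j}{z_j}\wedge\cdots$ the condition is $\alpha\ge a_j+\frac1m$, not $\alpha\ge a_j$, since no log poles are allowed upstairs along $\pi^{-1}\Delta_2$. Hence
\[
\mathcal S^p\big/\bigl(\Omega^p_X(\log D)\otimes L\bigr)\cong\Omega^p_{D_j}\otimes\mathcal{O}_{D_j}(D_j)\otimes L|_{D_j}.
\]
For $p<n$ this is a nonzero torsion sheaf. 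It can never be a direct summand of the locally free sheaf $\mathcal S^p$, and the residue map is the projection onto it, not a section.

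\emph{Why injectivity itself fails.} $\ker\iota_*$ is the image of the connecting map out of $H^{q-1}$ of this quotient, and it can be nonzero. Take:
\begin{itemize}
\item $X=\mathbb{P}^1$, $\Delta_1=0$, $\Delta_2=\{0\}$;
\item $L=K_{\mathbb{P}^1}$ with a metric smooth off $0$ and $|dz|_{h_L}=|z|$ near $0$ (so $a=-1$ and $\theta$ is smooth);
\item $p=0$, $q=1$, and a cut-off $\chi$ equal to $1$ near $0$.
\end{itemize}
Then $v_1=\chi\,dz/z$ is conic smooth. The form $u:=\dbar v_1=\dbar\chi\wedge dz/z$ is a smooth $L$-valued $(0,1)$-form, so $u\in A^{0,1}(X,D,L)$ and $u$ is conic $\dbar$-exact. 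But $\int_X u\neq 0$ by Stokes, so $u\neq\dbar v$ for every $v\in A^{0,0}(X,D,L)=C^\infty(X,L)$. Neither your splitting nor your \v{C}ech fallback can succeed in this case.

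\textbf{The paper's proof fails at the same point.} It writes $\cI(u)=u_0+\dbar u_1$ with $u_0$ harmonic for a smooth metric on $\Omega^p_X(\log D)\otimes L$. It then needs
\[
\lim_{\ep\to 0}\int_X \dbar\mu_\ep\wedge v_1\wedge\iota(\sharp u_0)=0.
\]
The divisibility of $\iota(\sharp u_0)\wedge dz_i$ by $z_i$ only controls the components of $v_1$ that contain $dz_i$. If $a_i=-1$, $v_1$ may be $z_i^{-1}$ times a form without $dz_i$, as in the example. The integrand is then of size $|z_i|^{-2}/\log(1/|z_i|)$ on the support of $\rho'_\ep$, and the limit is a residue-type integral over $D_i$, which is nonzero in the example.

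\textbf{For $a_i\in\,]-1,0[$ the theorem holds, and your argument is already a complete proof by a different route.} In that range the dangerous term is $O(|z_i|^{a_i-1})$, which is integrable, so the paper's dominated-convergence argument goes through. Your route is different: your monomial count gives $\mathcal S^p=\Omega^p_X(\log D)\otimes L$ exactly. So the two complexes resolve the same sheaf, and the inclusion induces an isomorphism on cohomology. This is purely local and sheaf-theoretic, using no harmonic theory and no K\"ahler hypothesis. The paper's duality-plus-cut-off argument yields only the vanishing statement, but it is the same mechanism that drives the regularity lemma for conic currents, where no such sheaf identification is available.
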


\begin{proof}
	The proof follows the ideas of \cite{LRW} and \cite{CDHP}.
	Consider the holomorphic vector bundle
	\[
	E:= \Omega_X^p(\log D)\otimes L
	\]
	endowed with a smooth Hermitian metric $h_E$, and let $(E^\star,h_E^\star)$ be its dual.
	We have a natural identification
	\[
	\mathcal{I}: A^{p,q}(X,D,L)\longrightarrow C^\infty_{(0,q)}(X,E),
	\]
	and a natural morphism
	\[
	\iota: C^\infty_{(n,n-q)}(X,E^\star)\longrightarrow
	C^\infty_{(0,n-q)}\bigl(X,\Omega_X^{n-p}\otimes L^\star\bigr)
	\]
	induced by contraction.
	
	Since $\dbar u=0$ on $X\setminus D$, we have $\dbar \mathcal{I}(u)=0$ on $X$.
	Fix a K\"ahler metric $\omega_X$ on $X$ and consider the smooth Hodge decomposition with respect to $(X,\omega_X)$ and $(E,h_E)$.
	Since $\mathcal{I}(u)$ is a $\dbar$-closed smooth $E$-valued form, we may write
	\[
	\mathcal{I}(u)=u_0+\dbar u_1,
	\]
	where $u_0$ is harmonic and $u_1\in C^\infty_{(0,q-1)}(X,E)$.
	Thus it suffices to show that $u_0=0$.
	
	\medskip
	
	Let $\sharp: C^\infty_{(0,q)}(X,E)\to C^\infty_{(n,n-q)}(X,E^\star)$ be the Hodge star operator.
	Since $u_0$ is harmonic, $\sharp u_0$ is also harmonic, cf. \cite[Chapter VI, Thm 7.3]{bookJP}. Then
	\[
	\|u_0\|_{L^2}^2
	= \int_X u_0\wedge \sharp u_0
	= \int_X \mathcal{I}(u)\wedge \sharp u_0 = \int_X  u\wedge \iota(\sharp u_0)
	= \lim_{\ep\to 0}\int_X \mu_\ep\, u\wedge \iota(\sharp u_0)
	= -\lim_{\ep\to 0}\int_X \dbar \mu_\ep \wedge v_1 \wedge \iota(\sharp u_0),
	\]
	where $\mu_\ep$ is the standard cut-off functions for $X\setminus D$ defined in \eqref{cutoff}.
	
	We suppose that $z_i =0$ defines locally $D_i$ and  $w_i=0$ defines the inverse image of $D_i$.  
	As $v_1\in \cC^\infty_{(p,q-1)}(X,\Delta_1,L)$ and $-1 \leq a_i <0$, 
then near a generic point of $w_i=0$ the form
	$d\ol w_i \wedge \pi^\star v_1$ may have a pole of the form 
	$$\frac{d\ol w_i\wedge dw_i}{w_i} ,$$
	but without higher order poles: if not, as $\pi^\star v_1$ is $\pi$-invariant, $d\ol w_i \wedge \pi^\star v_1$ should have a pole of the form 
	$\frac{d\ol w_i\wedge dw_i}{w_i ^{1+ m}}$. This contradicts with the fact that $v_1$ is smooth in the conic sense, as the coefficience $a_i \geq -1$.  On the other hand, because of the contraction in $\iota$, the wedge product $\iota(\sharp u_0) \wedge d z_i$ is divisible by $z_i$. As a consequence, the form
	\[
	d\ol w_i \wedge \pi^\star v_1 \wedge \pi^\star \big( \iota(\sharp u_0)\big)
	\]
	is smooth.  Consequently, by Lebesgue dominated convergence theorem, we have
	\[
	\lim_{\ep\to 0}\int_X \dbar \mu_\ep \wedge v_1 \wedge \iota(\sharp u_0)=0.
	\]
	Hence $\|u_0\|_{L^2}=0$, so $u_0=0$, which concludes the proof.
\end{proof}

To end this section, we ask the following question.

\begin{quest}
	Can one establish a strong Hodge decomposition in the setting where $h_L$ has analytic singularities, or even when the weight of $h_L$ is the difference of two quasi-psh functions?
	\medskip
	
In the case where $h_L$ has analytic singularities, after a modification of $X$, we may assume that the singularities of the metric $h_L$ are divisorial. However, when the Lelong numbers of $h_L$ are not rational, it is unclear what the metric on $X$ should be.
\end{quest}

%%%%%%%%%%%%%%%%%%%%%%%%%%%%%%%%%%%%%%%%%%%%%%%%%%%%%%%%%
%%%%%%%%%%%%%%%%%%%%%%%%%%%%%%%%%%%%%%%%%%%%%%%%%%%%%%%%
%%%%%%%%%%%%%%%%%%%%%%%%%%%%%%%%%%%%%%%%%%%%%%%%%%%%%%%%%

\section{Hodge theory for logarithmic holomorphic flat line bundles}\label{logflat}

Let $(X, D)$ be a pair consisting of a compact K\"ahler manifold $X$ and an snc divisor $D$. We denote by $M_{\rm B}(X_0)$ the space of rank one local systems on the open manifold $X\setminus D$.
Then the following beautiful result is due to Budur-Wang.
\begin{thm}[\cite{BW20}]\label{introcomp}
	Consider the germ of a holomorphic disk $\gamma: (\CC, 0)\to M_{\rm B} (X_0)$, and fix $\gamma(0)= \tau$. Let $k$ be a positive integer so that we have $\displaystyle \dim  H^p (X\setminus D, \gamma ( t)) \geq k$ for every $|t| \ll 1$.  Then we have
	\begin{equation} 
		\dim H^p (X\setminus D, \tau + t\dot\alpha )\geq k
	\end{equation}
	for $ |t|\ll 1$, where $\dot\alpha$ is the infinitesimal deformation  $d\gamma(0)$. 
\end{thm}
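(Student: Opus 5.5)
The plan is to reduce the statement to a linear-algebra statement about the cohomology at $\tau$, using the Hodge theory of Section~\ref{logflat} (the smooth forms $\cC^\infty_K(X,L)$, the strong Hodge decomposition Theorem~\ref{smoothhodge}, and the resulting $\ddbar$-lemma). The whole argument rests on a formality statement, step (2) below, which I expect to be the main obstacle.

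\textbf{Step (1): parametrise the germ.} Since $M_{\rm B}(X_0)$ is an algebraic torus, a neighbourhood of $\tau$ is identified, via the exponential, with a neighbourhood of $0$ in $H^1(X\setminus D,\CC)$. So I write $\gamma(t)=\tau\cdot\exp(\beta(t))$, where $\beta(t)$ is a holomorphic germ in $H^1(X\setminus D,\CC)$ with $\beta(0)=0$ and $\beta'(0)=\dot\alpha$.

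Realise $\tau$ by a logarithmic flat bundle $(L,\nabla)$ with a harmonic metric $h_L$, and represent each class $\beta\in H^1(X\setminus D,\CC)$ by a closed form $\beta=\beta^{1,0}+\beta^{0,1}$. Here $\beta^{1,0}$ is a holomorphic log $1$-form and $\beta^{0,1}$ is the conjugate of a holomorphic log form. Then $H^p(X\setminus D,\tau\exp\beta)$ is computed by the fine resolution of Subsection~\ref{resolutionsection}, namely
\[
\bigl(\cC^\infty_K(X,L),\ D_K+\beta\wedge\bigr).
\]

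\textbf{Step (2): formality (main obstacle).} I would show that, as a family over the germ $(H^1,0)$, this complex is quasi-isomorphic to the finite-dimensional Aomoto-type complex
\[
\bigl(H^\bullet(X\setminus D,\tau),\ \beta\wedge\bigr).
\]
Following Deligne--Griffiths--Morgan--Sullivan and Goldman--Millson, I would do this with the zig-zag
\[
\bigl(\cC^\infty_K,\ D_K+\beta\bigr)\ \hookleftarrow\ \bigl(\ker D'_K,\ D''_K+\beta\bigr)\ \twoheadrightarrow\ \bigl(\ker D'_K/\Imm D'_K,\ \beta\bigr).
\]
On the last term the induced differential $D''_K$ vanishes, and that term is identified with the harmonic space, hence with $H^\bullet(\tau)$. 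Both maps should be quasi-isomorphisms by the $\ddbar$-lemma for $(D'_K,D''_K)$, which follows from Theorem~\ref{smoothhodge} exactly as Corollary~\ref{conicddabr} follows from Theorem~\ref{CoHo1}.

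The delicate point is that $\beta\wedge$ must preserve $\ker D'_K$ and $\Imm D'_K$. This requires $D'_K\beta=0$, i.e.\ $\beta$ must be $D'_K$-closed. The form $\beta^{1,0}$ is holomorphic, hence closed; but $\beta^{0,1}$ is only closed, not $D'_K$-closed. If this fails, I would first try to absorb $\beta^{0,1}$ by switching the roles of $D'_K$ and $D''_K$: use $\ker D''_K$ for the antiholomorphic directions, or equivalently twist $h_L$ by the unitary part of $\beta$, so that $\beta$ splits compatibly with the pair of operators. The other approach is to run the spectral-sequence argument of the filtration by powers of $\beta$ and use the $\ddbar$-lemma to show it degenerates.

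One further point needs care for the quasi-isomorphism to hold uniformly in $\beta$ near $0$. The mixed Poincar\'e/conic metric depends on the decomposition $D=\Delta_1+\Delta_2+\Delta_3$, and this decomposition can change as the residues move. Since $\beta$ is small, the residues change only slightly, so I expect to be able to work in a fixed model determined by $\tau$.

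\textbf{Step (3): the cone property.} Granting (2), the germ at $\tau$ of the jump locus $\{\dim H^p\ge k\}$ is identified, via $\exp$, with the germ at $0$ of the resonance-type variety
\[
R=\Bigl\{\beta\in H^1 : \dim H^p\bigl(H^\bullet(\tau),\beta\wedge\bigr)\ge k\Bigr\}.
\]
The condition defining $R$ is a rank condition on matrices depending linearly on $\beta$, so $R$ is invariant under $\beta\mapsto c\beta$; that is, $R$ is a cone.

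\textbf{Step (4): conclusion.} By hypothesis $\beta(t)\in R$ for all small $t$. Since $R$ is closed and conical, $\beta(t)/t\in R$ for $t\neq 0$, and letting $t\to 0$ gives $\dot\alpha\in R$. Hence $t\dot\alpha\in R$ for all $t$. Transporting back through step (2) gives $\dim H^p(X\setminus D,\tau+t\dot\alpha)\ge k$ for $|t|\ll 1$.
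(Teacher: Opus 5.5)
\textbf{There is a genuine gap in Steps (1)--(2): you implicitly assume $\Delta_1=0$.}

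The complex $(\cC^\infty_K(X,L),D_K)$ computes $H^\bullet(X\setminus D,\tau)$ only in that case (Theorem~\ref{L2cohoisom}). Forms in $\cC^\infty_K(X,L)$ are smooth at generic points of $\Delta_1$. So this complex does not see the punctures along the components where $\tau$ has trivial local monodromy and which carry no residue of a log $1$-form. The correct resolution is $\cC^\infty_{\bullet,K}(X,\Delta_1,L)$ (Theorem~\ref{quasiiso}), and on that space there is no $\ddbar$-lemma, since the induced metric on $\Omega^p_X(\log\Delta_1)\otimes L$ is not flat. This is not a marginal case: on the component of the trivial character one has $\Delta_1=D-\Delta_2$.

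Worse, your formality claim is false when $\Delta_1\neq 0$, so it cannot be patched. Take an elliptic curve $E$, a line bundle $N\to E$ of nonzero degree, $X=\mathbb P(N\oplus\mathcal O_E)$, and $D$ the two disjoint sections. Then $X\setminus D$ is homotopic to a Heisenberg-type nilmanifold.
\begin{itemize}
\item $H^1(X\setminus D)\cong H^1(E)$ is pure, so log $1$-forms carry no residues. Hence $\Delta_2=0$, and at $\tau=1$ one has $\Delta_1=D$.
\item The cup product $H^1\times H^1\to H^2(X\setminus D)$ vanishes (it factors through $\pi^\star H^2(E)=0$). So for $p=k=1$ your cone $R$ is all of $H^1\cong\CC^2$.
\item However, $H^1(X\setminus D,\rho)=0$ for every $\rho\neq 1$. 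So the germ of the jump locus at $1$ is a point.
\end{itemize}
Thus the identification of Step (3) fails. This is the familiar failure of the tangent cone theorem for non-$1$-formal quasi-projective varieties: the correct finite model must remember the weight filtration, and its differential is nonzero.

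\textbf{How the paper avoids this.} The paper never claims formality. Instead it proves, in the spirit of Green--Lazarsfeld (Theorem~\ref{solveequs}), that a $D_K$-closed class which extends to first order in the direction $(\xi,\beta)$ extends to all orders along the straight line, with convergence.
\begin{itemize}
\item When $\Delta_1=0$, this is the iterated $D_K D_K^c$-lemma (Corollary~\ref{ddbarK}).
\item When $\Delta_1\neq 0$, the paper embeds $\cC^\infty_K(X,\Delta_1,L)$ into the currents $\mathcal D_K(X,L)$. It applies the current lemma (Theorem~\ref{ddbarcurrentconicK}) by induction over the strata $Y_I$; this needs the extra first-order hypothesis (2) on the strata, supplied by genericity in Corollary~\ref{nearbygeneric}. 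It then returns to log-smooth forms via the regularity Lemma~\ref{regularprop}.
\item Deligne's Theorem~\ref{Deli} finally transfers the result to $M_{\rm B}$.
\end{itemize}

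\textbf{When $\Delta_1=0$.} Here your route is a legitimate alternative, and it yields the stronger conclusion that the germ is the Aomoto cone. You would still have to justify passing from a quasi-isomorphism at $\beta=0$ to one for the whole family, for instance by a filtration argument over the local ring of $(H^1,0)$.

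\textbf{A smaller point.} The worry you flag in Step (2) is unfounded. The tangent directions are $\mathcal H^{0,1}\oplus H^0(X,\Omega^1_X(\log D))$, so $\beta^{0,1}$ is the conjugate of a holomorphic form on $X$. Both pieces of $\beta$ are $\partial$- and $\dbar$-closed on $X\setminus D$ (Corollary~\ref{Noglemma}), which is exactly what the paper uses with the pair $(D_K,D_K^c)$.
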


\noindent In the statement above, we denote by \(H^\bullet(X\setminus D, \tau)\) the cohomology groups with coefficients in \(\tau\).  
One of the main objectives in the recent article \cite{CDHP} is to propose a new proof of this result,  
by developing a version of the Hodge theory for rank-one local systems on quasi-compact K\"ahler manifolds. 
The proof of \ref{introcomp} is self-contained and differs substantially from the original argument in \cite{BW20}, which relies on global considerations involving moduli spaces and the Riemann–Hilbert correspondence (with some ideas tracing back to \cite{Sim93}).  
We now briefly outline the main ideas in \cite{CDHP}, a few key statements/facts will be presented more in detail in the next subsections.

Let $M_{\rm DR}(X/D)$ be the space of logarithmic flat bundles, i.e. pairs $(L, \nabla)$ consisting of line bundles $L\to X$ together with a holomorphic connection   
\[\nabla: L\to L\otimes \Omega_X(\log D)\]
with log poles along $D$ such that $\nabla^2= 0$. We have a natural map
\begin{equation}\label{introsurjj}
\rho: M_{\rm DR}(X/D)\to M_{\rm B}(X\setminus D),
\end{equation}
which is surjective, by a well-known result due to Deligne, see \cite[II, 6.10]{Del70}. As shown in the same reference, the correspondence between the 
two spaces in \eqref{introsurjj} goes much further: if we consider the complex 
\begin{equation}\label{introderham}
	0 \to \mathcal{O} (L) \to\mathcal{O} (L) \otimes \Omega_X^{1}(\log D) \to  \mathcal{O} (L) \otimes \Omega_X^{2}(\log D) \to \cdots \mathcal{O} (L) \otimes \Omega_X^{n}(\log D) \to 0 
	\end{equation}
induced by the connection $\nabla$, then we have $$\mathbb H^p (X, L \otimes \Omega_X ^{\bullet }(\log D) )\simeq H^p (X_0, \tau),$$
provided that the residue of $(L, \nabla)$ on any component of $D$ is not a positive integer. Here $\tau$ is the image of $(L, \nabla)$ via the map $\rho$. 
\smallskip

The strategy adopted in \cite{CDHP} for the proof of Theorem \ref{introcomp} consists in developing the necessary tools in order to construct a resolution of  \eqref{introderham}. Afterwards ideas from Hodge theory are coming into the picture: in \emph{loc. cit.} one shows that the cohomology of the resulting complex is computed by the kernel of a Laplace-type operator corresponding to $\nabla + \dbar$. Finally, a version of $\ddbar$-lemma adapted to the current context is established, and it is one of the main technical tool in the proof.

In order to be able to select "the best" representative in a cohomology class, one first needs to endow the respective bundle/manifold ($L$ and $(X, D)$ in our context) with metrics which are 
geometrically significant. In case of the line bundle $L$, this will be a harmonic metric. Its definition, together with the natural decomposition of the divisor $D$ (induced by the component of $M_{\rm B}(X\setminus D)$ containing $\tau$) is surveyed in the first two subsections below. The said decomposition of $D$ will basically dictate the choice of the ambient metric $\omega_D$ on $(X, D)$ -- it will have mixed 
conic-Poincaré type singularities. Then everything is in place for stating and proving the Hodge decomposition theorem, see Section \ref{hodgeloc}. The resolution of the complex \eqref{introderham} is constructed in Section \ref{resolutionsection}, and -unsurprisingly- it involves differential forms with log poles, which are not necessarily $L^2$ with respect to the objects mentioned above. In particular, we cannot apply directly the aforementioned Hodge theory.
Hence the need for considering more general objects, namely $L$-valued currents. Their definition and relevance in our context is discussed in Section \ref{currentsect}.

\medskip

\subsection{Harmonic metrics and basic Hodge identities}\label{logflatsubsection}

To begin with, we recall the definition of harmonic metrics and the basic Hodge identities for $(L,\nabla)\in M_{DR}(X/D)$, as established in the foundational work \cite{Sim92}.

Set $D_K:=\dbar+\nabla$, where $\dbar$ defines the holomorphic structure of $L$. Then $D_K^2=0$ on $X\setminus D$.
Let $h$ be a metric on $L$, smooth on $X\setminus D$ and with possible analytic singularities along $D$.
Let $D'_h$ be the $(1,0)$-part of the Chern connection of $(L,h)$, and set
\[
\theta_0 := \frac{D'_h-\nabla}{2}.
\]
Then $\theta_0$ is a $(1,0)$-form with possible logarithmic poles along $D$.
By construction, the two connections $D'_h+\dbar$ and $\nabla+(\dbar-2\ol\theta_0)$ preserve the metric $h$.
We can therefore define $D''_K$ as follows.

\begin{defn}\label{def:connection}
	We set
	\begin{equation}\label{dbarop}
		D''_K
		:= \frac{\dbar+(\dbar-2\ol\theta_0)}{2}+\frac{\nabla-D'_h}{2}
		= \dbar + (\theta_0-\ol\theta_0),
	\end{equation}
	\begin{equation}\label{e1}
		D'_K := D'_h+\theta_0+\ol\theta_0.
	\end{equation}
	and 
		\begin{equation}
		D^c _K := D''_K -D'_K.
	\end{equation}
\end{defn}

Note that, by construction, the equality
\[
D''_K + D'_K = \dbar+\nabla = D_K
\]
holds. Moreover, it turns out that 
the crucial Hodge identities are still valid in this setup.

\begin{proposition}
	Let $\omega$ be a K\"ahler metric on $X\setminus D$, and let $(D'_K)^\star$ and $(D''_K)^\star$ be the formal adjoints of $D'_K$ and $D''_K$ with respect to $(h,\omega)$.
	Then
	\begin{equation}\label{kahleridentity}
		(D'_K)^\star = i[\Lambda_\omega,D''_K]
		\qquad\text{and}\qquad
		(D''_K)^\star = -i[\Lambda_\omega,D'_K].
	\end{equation}
\end{proposition}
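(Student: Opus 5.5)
The plan is to reduce both identities to the classical Kähler identities for the Chern connection of $(L,h)$ on $X\setminus D$, plus a pointwise identity for the zero-order terms built from $\theta_0$. Everything is local on $X\setminus D$, where $h$ and $\omega$ are smooth and $\omega$ is Kähler, so the singularities along $D$ play no role.

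\textbf{Step 1: split the operators.} Write
\[
D''_K=\dbar+(\theta_0-\ol\theta_0),\qquad D'_K=D'_h+(\theta_0+\ol\theta_0),
\]
where $\theta_0$ acts by wedge product, $\ol\theta_0$ is its conjugate, and both are $0$-th order operators. Both sides of each identity in \eqref{kahleridentity} are additive in the operator. It therefore suffices to prove two things: the Chern-connection identities
\[
(D'_h)^\star=i[\Lambda_\omega,\dbar],\qquad \dbar^\star=-i[\Lambda_\omega,D'_h],
\]
and the corresponding identities for the zero-order parts.

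\textbf{Step 2: the Chern part.} The Chern identities are the standard Akizuki–Nakano/Kähler identities for a Hermitian line bundle over a Kähler manifold (see \cite[Chapter VI]{bookJP}). They are valid on $X\setminus D$ because they are purely local.

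\textbf{Step 3: the zero-order part.} We must show
\[
(\theta_0+\ol\theta_0)^\star=i[\Lambda,\theta_0-\ol\theta_0],\qquad (\theta_0-\ol\theta_0)^\star=-i[\Lambda,\theta_0+\ol\theta_0].
\]
For a $(1,0)$-form $\alpha$, write $e(\alpha)$ for exterior multiplication by $\alpha$. The basic pointwise facts, verified in an orthonormal frame at a point, are
\[
[\Lambda,e(\alpha)]=-i\,e(\ol\alpha)^\star,\qquad [\Lambda,e(\ol\alpha)]=i\,e(\alpha)^\star.
\]
These are the same identities that underlie $[\Lambda,\dbar]=-i D'^\star$ at the symbol level. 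Since $L$ is a line bundle, $\theta_0$ is a scalar $(1,0)$-form and the adjoint of $e(\theta_0)$ is the contraction by the metric dual of $\ol\theta_0$, so no endomorphism part intervenes.

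Substituting these facts:
\[
i[\Lambda,\theta_0-\ol\theta_0]=i\bigl(-i\,e(\ol\theta_0)^\star-i\,e(\theta_0)^\star\bigr)=e(\ol\theta_0)^\star+e(\theta_0)^\star=(\theta_0+\ol\theta_0)^\star .
\]
Similarly,
\[
-i[\Lambda,\theta_0+\ol\theta_0]=-i\bigl(-i\,e(\ol\theta_0)^\star+i\,e(\theta_0)^\star\bigr)=e(\theta_0)^\star-e(\ol\theta_0)^\star=(\theta_0-\ol\theta_0)^\star .
\]
Adding these to Step 2 gives \eqref{kahleridentity}.

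\textbf{Main obstacle.} The only delicate point is getting the signs and conjugations of the pointwise commutator $[\Lambda,e(\alpha)]$ right, since they depend on the conventions for $\Lambda$. I would check these carefully in $\CC^n$ with the standard metric, taking $\alpha=dz_1$, against the known identity $[\Lambda,\dbar]=-iD'^\star$. This cross-check fixes the convention consistently and guarantees that the signs in \eqref{dbarop} and \eqref{e1} are exactly the ones making $D''_K$ and $D'_K$ the correct Simpson-type splitting.
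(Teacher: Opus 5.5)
Your proof is correct and follows the standard argument. The paper gives no proof of this proposition and implicitly defers to Simpson \cite{Sim92}, who obtains the identities the same way: the usual K\"ahler identities for a metric connection, plus pointwise algebraic identities for the zero-order terms. The only difference is that Simpson splits off the unitary part of $D_K$ rather than the Chern connection $D'_h+\dbar$, which just moves the skew-Hermitian term $\theta_0-\ol\theta_0$ from one piece to the other.

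Your signs are right. Note that the argument never uses how $\theta_0$ is related to $\nabla$, nor the harmonicity of $h$, so the identities hold for any smooth scalar $(1,0)$-form $\theta_0$ on $X\setminus D$. Consequently the last claim of your ``main obstacle'' paragraph, that the cross-check certifies the signs in \eqref{dbarop} and \eqref{e1}, is beside the point; the cross-check is only needed to fix the normalisation of $\Lambda$.
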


Given a local system $(L,\nabla)$, in general one cannot construct a Hermitian metric $h$ such that $D'_h=\nabla$. The substitute is the so-called harmonic metric, an important notion which we now recall.

\begin{defn}[Harmonic metric]\label{def:harmonic}
	We say that $h$ is a \emph{harmonic metric} if $(D''_K)^2=0$ on $X\setminus D$.
\end{defn}

The existence of harmonic metrics for higher-rank semisimple local systems is a deep issue; we refer to \cite{Sim92} for the compact K\"ahler case and to \cite{Moc06} for the quasi-projective case.
In our rank-one case, however, the situation is much simpler, thanks to the following caracterisation.

\begin{proposition}\label{harmonicmetric}
	A metric $h$ is harmonic if and only if $i\Theta_h(L)=0$ on $X\setminus D$.
\end{proposition}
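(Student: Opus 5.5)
We prove the equivalence by computing $(D''_K)^2$ directly from \eqref{dbarop} and splitting it by bidegree. On $X\setminus D$ everything is smooth, so the computation is purely local and pointwise. Write $D''_K=\dbar+\theta_0-\ol\theta_0$, where $\theta_0$ is a $(1,0)$-form acting by wedge product. Since $L$ has rank one, $\theta_0\wedge\theta_0=0$ and $\ol\theta_0\wedge\ol\theta_0=0$, and the wedge terms anticommute. Expanding gives
\[
(D''_K)^2=\dbar^2+\dbar\theta_0-\dbar\ol\theta_0-(\theta_0\wedge\ol\theta_0+\ol\theta_0\wedge\theta_0)=\dbar\theta_0-\dbar\ol\theta_0,
\]
where $\dbar\theta_0$ is a $(1,1)$-form and $\dbar\ol\theta_0$ is a $(0,2)$-form. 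Hence $(D''_K)^2=0$ if and only if $\dbar\theta_0=0$ and $\dbar\ol\theta_0=0$, the latter being equivalent to $\partial\theta_0=0$ by conjugation.

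Next we express $\theta_0$ concretely. Take a local holomorphic frame $e$ of $L$ on $X\setminus D$ and write $|e|_h^2=e^{-\varphi}$ and $\nabla e=\alpha\otimes e$. Here $\alpha$ is a holomorphic $1$-form, and $d\alpha=0$ because $\nabla^2=0$ (again using rank one). Then $D'_h e=-\partial\varphi\otimes e$, so
\[
\theta_0=-\tfrac12(\partial\varphi+\alpha).
\]
We now compute the two conditions. Since $\alpha$ is holomorphic, $\dbar\theta_0=-\tfrac12\dbar\partial\varphi=\tfrac12\ddbar\varphi$, which up to a nonzero constant is $i\Theta_h(L)$. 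Also $\partial\theta_0=-\tfrac12(\partial^2\varphi+\partial\alpha)=0$ automatically, because $d\alpha=0$ and $\alpha$ is of type $(1,0)$, which forces $\partial\alpha=0$. So the $(0,2)$ part of $(D''_K)^2$ always vanishes.

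It follows that $(D''_K)^2=\dbar\theta_0=\tfrac12\ddbar\varphi$, which is a constant multiple of $i\Theta_h(L)$ on $X\setminus D$. Therefore $h$ is harmonic if and only if $i\Theta_h(L)=0$ on $X\setminus D$.

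The only real care needed is bookkeeping of signs in the graded commutators. Terms like $\dbar\circ\theta_0+\theta_0\circ\dbar=\dbar\theta_0$ must be interpreted as operators, not as wedges. Signs do not affect the vanishing criterion, since each bidegree component must vanish separately. The essential inputs are the rank-one commutativity and the flatness $d\alpha=0$.
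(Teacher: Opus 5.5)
Your proof is correct and is essentially the direct local computation the paper points to: rank one kills $(\theta_0-\ol\theta_0)^2$, flatness ($\partial\alpha=0$) kills the $(0,2)$-component $\dbar\ol\theta_0$, and the remaining $(1,1)$-component $\dbar\theta_0$ is a nonzero multiple of $i\Theta_h(L)$. As you note, the argument does not depend on the sign convention for $\theta_0$, which matters here because the paper's displayed formulas for $\theta_0$ and $D''_K$ are actually off from each other by such a sign.
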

The proof of Proposition \ref{harmonicmetric} follows from a direct computation, cf. \cite[Prop 6.6]{CDHP}

\medskip

Before discussing the existence of harmonic metrics, we recall the definition of the residue of a rank-one local system $(L,\nabla)$.

\begin{defn}
	Let $(L,\nabla)\in M_{DR}(X/D)$, and let $e_L$ be a local holomorphic frame of $L$.
	Assume that $D_i$ is locally defined by $z_1=0$. By compactness of $D_i$, there exists a constant $a_i$ such that
	\[
	\nabla e_L = \frac{a_i\,dz_1}{z_1}\wedge e_L + C^\infty.
	\]
	We call $a_i$ the residue of $(L,\nabla)$ along $D_i$, denoted by $\Res_{D_i}(L,\nabla)$.
	Obviously, this definition is independent of the choice of the frame $e_L$.
\end{defn}

\begin{proposition}\label{firstchern}
	Let $(L,\nabla)\in M_{DR}(X/D)$. Then
	\[
	c_1(L) = -\sum_i \Rel\bigl(\Res_{D_i}(L,\nabla)\bigr)\,[D_i]\in H^2(X,\mathbb Q).
	\]
	In particular, there always exists a harmonic metric $h$ on $(L,\nabla)$.
\end{proposition}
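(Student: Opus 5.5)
The plan is to compute $c_1(L)$ with a smooth connection built from $\nabla$ and a partition of unity, and read off the residue contributions. Cover $X$ by charts $V_\alpha$ with local frames $e_\alpha$, and write $\nabla e_\alpha = \eta_\alpha\otimes e_\alpha$, where $\eta_\alpha$ is a holomorphic logarithmic $1$-form on $V_\alpha$. Integrability $\nabla^2=0$ means $d\eta_\alpha=0$ on $V_\alpha\setminus D$. If $g_{\alpha\beta}$ are the transition functions with $e_\beta=g_{\alpha\beta}e_\alpha$, then $\eta_\beta-\eta_\alpha=d\log g_{\alpha\beta}$.

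\textbf{Step 1: reduce to a smooth connection.} Near each $D_i$ I will write $\eta_\alpha = \Res_{D_i}(L,\nabla)\,\frac{dz_1}{z_1}+(\text{holomorphic})$. The coefficient is constant along $D_i$: closedness of $\eta_\alpha$ forces the residue function to be locally constant on $D_i$, and $D_i$ is connected. To remove the poles globally, take the canonical sections $s_{D_i}$ with smooth metrics $h_i$, and set
\[
\tilde\eta_\alpha := \eta_\alpha - \sum_i \Res_{D_i}(L,\nabla)\,\partial\log|s_{D_i}|^2_{h_i}.
\]
The correction term $\sum_i \Res_{D_i}(L,\nabla)\,\partial\log|s_{D_i}|^2_{h_i}$ is a global form, so the $\tilde\eta_\alpha$ still satisfy the cocycle relation $\tilde\eta_\beta-\tilde\eta_\alpha=d\log g_{\alpha\beta}$. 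They are also smooth on all of $X$, because $\partial\log|s_{D_i}|^2=\frac{dz_1}{z_1}+$smooth near $D_i$. Hence the $\tilde\eta_\alpha$ define a smooth connection on $L$, and its curvature $d\tilde\eta_\alpha$ represents $c_1(L)$, up to the normalisation $\frac{i}{2\pi}$ (or $\frac{1}{2\pi i}$), which I will fix consistently with the statement.

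\textbf{Step 2: compute the curvature.} On $X\setminus D$ we have
\[
d\tilde\eta = -\sum_i \Res_{D_i}(L,\nabla)\,\dbar\partial\log|s_{D_i}|^2_{h_i}.
\]
Since $\tilde\eta$ is smooth, $d\tilde\eta$ is a smooth global form and has no current part on $D$. Its cohomology class is therefore $\sum_i \Res_{D_i}(L,\nabla)$ times the class of $\frac{i}{2\pi}\partial\dbar\log|s_{D_i}|^2_{h_i}=-i\Theta_{h_i}(\O(D_i))/2\pi$ on $X\setminus D$, which is smooth and represents $-[D_i]$. With the normalisation this gives $c_1(L)=-\sum_i \Res_{D_i}(L,\nabla)[D_i]$ in complex cohomology.

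\textbf{Step 3: take real parts.} Since $c_1(L)$ is a real class, it equals its real part, so $c_1(L)=-\sum_i \Rel\bigl(\Res_{D_i}(L,\nabla)\bigr)[D_i]$. Rationality of these real parts is not needed for the formula; alternatively the sum may be read in $H^2(X,\mathbb R)$.

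\textbf{Step 4: existence of a harmonic metric.} Let $b_i:=\Rel\Res_{D_i}(L,\nabla)$. The class $c_1(L)+\sum_i b_i[D_i]$ is zero. So starting from any smooth metric $h_0$ on $L$, the real closed $(1,1)$-form
\[
i\Theta_{h_0}(L)+\sum_i b_i\, i\Theta_{h_i}(\O(D_i))
\]
is $d$-exact. By the $\ddbar$-lemma on the compact Kähler manifold $X$ it equals $i\ddbar f$ with $f$ smooth. Now set
\[
h := h_0\, e^{f}\prod_i |s_{D_i}|_{h_i}^{2b_i},
\]
choosing the sign of $f$ so that the terms cancel. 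By Poincaré–Lelong, $i\Theta_h(L)=\sum_i b_i[D_i]$ globally, so $i\Theta_h(L)=0$ on $X\setminus D$. Proposition \ref{harmonicmetric} then shows that $h$ is harmonic, and it has analytic singularities along $D$.

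The main obstacle is sign and normalisation bookkeeping in Step 2, namely matching the $2\pi i$ factors with the convention for $c_1$. The only conceptual point is the constancy of the residue along $D_i$, which follows from $d\eta=0$.
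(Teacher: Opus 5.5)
Your argument is correct, apart from a harmless sign slip in Step 4 noted below. It takes a somewhat different route from the paper.

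\textbf{Comparison with the paper.} The paper fixes a smooth metric $h_L$ and forms the global $(1,0)$-form $\alpha=D'_{h_L}-\nabla$, which has logarithmic poles. It then computes $\dbar\alpha$ as a current. The smooth part gives $i\Theta_{h_L}(L)$ and the poles give the currents $\Res_{D_i}(L,\nabla)[D_i]$, so $i\Theta_{h_L}(L)+\sum_i\Res_{D_i}(L,\nabla)[D_i]$ vanishes in cohomology. Taking real parts gives the formula, and the harmonic metric is then only asserted.

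You instead remove the poles by hand, subtracting the global forms $\Res_{D_i}(L,\nabla)\,\partial\log|s_{D_i}|^2_{h_i}$. This gives an honest smooth connection on $L$, compatible with $\dbar_L$ though not unitary. You then read off $c_1(L)$ by Chern--Weil from a smooth curvature form, with the divisors entering only through $c_1(\O(D_i))=[\tfrac{i}{2\pi}\Theta_{h_i}(\O(D_i))]$.

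The two computations are equivalent via Poincar\'e--Lelong. Yours avoids currents entirely. In particular it bypasses a step the paper leaves implicit: passing from $\dbar$-exactness of the current $\dbar\alpha$ to its vanishing in $H^2(X,\mathbb C)$, which needs $\partial\alpha=0$ as a current or the $\ddbar$-lemma for currents. The price is that you must check each residue is constant along $D_i$, including where other components cross, so that the poles cancel exactly. Your closedness argument, together with continuity from generic points of $D_i$, does this. You also make the construction of the harmonic metric explicit.

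\textbf{The sign slip in Step 4.} With $h=h_0e^{f}\prod_i|s_{D_i}|^{2b_i}_{h_i}$, Poincar\'e--Lelong gives $i\Theta_h(L)=-\sum_i b_i[D_i]$ (up to the factor $2\pi$), not $+\sum_i b_i[D_i]$. Positive exponents in the metric produce negative divisor coefficients in the curvature current. This is the only sign compatible with your Step 3, since the curvature current $\frac{i}{2\pi}\Theta_h(L)$ must represent $c_1(L)=-\sum_i b_i[D_i]$. It also agrees with the paper's $i\Theta_h(L)=-\sum_i\Rel\bigl(\Res_{D_i}(L,\nabla)\bigr)[D_i]$.

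The slip does not affect the conclusion, since $i\Theta_h(L)=0$ on $X\setminus D$ either way. However, this sign is what later yields $\nu_Y(h_L)=-\Res_Y(L,\nabla)$ in Lemma~\ref{choicemetric}.
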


\begin{proof}
	Fix a smooth metric $h_L$ on $L$. Then $\alpha:=D'_{h_L}-\nabla$ is a $(1,0)$-form with logarithmic poles along $D$.
	We compute
	\[
	\dbar \alpha
	= [\dbar,\alpha]
	= [\dbar, D'_{h_L}-\nabla]
	= i\Theta_{h_L}(L) + \sum_i \Res_{D_i}(L,\nabla)\,[D_i]
	=0 \in H^2(X,\mathbb C).
	\]
	Since $i\Theta_{h_L}(L)\in H^2(X,\mathbb R)$, taking real parts yields
	\[
	c_1(L) = -\sum_i \Rel\bigl(\Res_{D_i}(L,\nabla)\bigr)\,[D_i]\in H^2(X,\mathbb R).
	\]
	Thus we can find a metric $h$ on $L$ such that
	\[
	i\Theta_h(L) = -\sum_i \Rel\bigl(\Res_{D_i}(L,\nabla)\bigr)\,[D_i].
	\]
	In particular, $h$ is harmonic on $(L,\nabla)$.
\end{proof}

Note that the harmonic metric is not unique. Indeed, let $a_i\in \mathbb R$ satisfy
\[
\sum_i a_i\, c_1(D_i)=0\in H^2(X,\mathbb R).
\]
Then, by the familiar $\ddbar$-lemma (see however Theorem~\ref{ddbarcurrent}), there exists a function $\phi$ such that
\[
\ddc \phi = \sum_i a_i [D_i].
\]
Therefore, if $h$ is harmonic, then the metric $h\,e^{-\phi}$ is also harmonic.

\medskip

Given a harmonic metric $h$, we have the following important result.

\begin{proposition}\cite[Section 1]{Sim92}\label{hodgelap}
	Consider the Laplacians
	\[
	\Delta_K := [D_K, D_K^\star],\qquad
	\Delta'_K := [D'_K,(D'_K)^\star],\qquad
	\Delta''_K := [D''_K,(D''_K)^\star].
	\]
	If $h$ is harmonic, then
	\begin{equation}\label{comparelaplace}
		\Delta''_K = \Delta'_K = \frac{1}{2}\Delta_K
		\qquad \text{on } X\setminus D.
	\end{equation}
	Moreover, $D''_K$, $D'_K$, $(D''_K)^\star$, and $(D'_K)^\star$ commute with $\Delta''_K$ and $\Delta'_K$.
\end{proposition}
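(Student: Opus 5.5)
The plan follows Simpson's argument for the Kähler identities of a harmonic bundle. Everything is computed pointwise on $X\setminus D$, where $h$ and $\omega$ are smooth. The only inputs are the identities \eqref{kahleridentity}, the flatness $D_K^2=0$, and the harmonicity $(D''_K)^2=0$ from Definition \ref{def:harmonic}.

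\textbf{Step 1: all the relevant squares vanish.} I first show that $(D'_K)^2=0$ and $D'_KD''_K+D''_KD'_K=0$ on $X\setminus D$.
\begin{itemize}
\item Since $D_K^2=0$ and $D_K=D'_K+D''_K$, we have $(D'_K)^2+[D'_K,D''_K]+(D''_K)^2=0$, where $[\cdot,\cdot]$ is the graded commutator.
\item The operator $D^c_K=D''_K-D'_K$ is the conjugate of $D_K$ under the change of signs $\theta_0\mapsto-\theta_0$, $\bar\theta_0\mapsto -\bar\theta_0$, combined with the passage $D'_h\mapsto -D'_h$. More concretely, I would argue that $(D''_K)^2=0$ is equivalent to $(D'_K)^2=0$. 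Both are equivalent to the pair of conditions $\Theta_h=0$, $\dbar\theta_0=0$, $D'_h\theta_0=0$, $\theta_0\wedge\theta_0=0$ together with the conjugate conditions. These conditions are read off by expanding $(\dbar+\theta_0-\bar\theta_0)^2$ and $(D'_h+\theta_0+\bar\theta_0)^2$ by type. This is the computation of \cite[Prop 6.6]{CDHP}, behind Proposition \ref{harmonicmetric}.
\item With both squares zero, Step 1 follows from the expansion in the first item.
\end{itemize}

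\textbf{Step 2: the mixed terms vanish.} I next show that $[D'_K,(D''_K)^\star]=0$ and $[D''_K,(D'_K)^\star]=0$. By \eqref{kahleridentity}, $(D''_K)^\star=-i[\Lambda_\omega,D'_K]$. The graded Jacobi identity then gives
\[
[D'_K,[\Lambda_\omega,D'_K]]=[[D'_K,\Lambda_\omega],D'_K]+[\Lambda_\omega,[D'_K,D'_K]].
\]
Here $[D'_K,D'_K]=2(D'_K)^2=0$, and $[[D'_K,\Lambda],D'_K]=-[D'_K,[\Lambda,D'_K]]$. Hence $[D'_K,[\Lambda,D'_K]]=0$, which is the first vanishing. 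The second follows symmetrically from $(D'_K)^\star=i[\Lambda_\omega,D''_K]$ and $(D''_K)^2=0$.

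\textbf{Step 3: $\Delta'_K=\Delta''_K$.} I use Jacobi again, this time with $[D'_K,D''_K]=0$ from Step 1:
\[
\Delta'_K=[D'_K,i[\Lambda,D''_K]]=i[[D'_K,\Lambda],D''_K]+i[\Lambda,[D'_K,D''_K]]=[D''_K,-i[\Lambda,D'_K]]=\Delta''_K .
\]
The signs must be checked carefully, using that all operators involved are odd. This sign bookkeeping is the only delicate point.

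\textbf{Step 4: $\Delta_K=2\Delta''_K$.} Expanding, and writing $D_K^\star=(D'_K)^\star+(D''_K)^\star$,
\[
\Delta_K=\Delta'_K+\Delta''_K+[D'_K,(D''_K)^\star]+[D''_K,(D'_K)^\star].
\]
The last two terms vanish by Step 2, so $\Delta_K=\Delta'_K+\Delta''_K=2\Delta''_K$ by Step 3.

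\textbf{Step 5: commutation.} Since $(D''_K)^2=0$, we get $[D''_K,\Delta''_K]=D''_KD''_K{}^\star D''_K-D''_KD''_K{}^\star D''_K=0$. By adjunction $(D''_K)^\star$ also commutes with $\Delta''_K$. Likewise, $D'_K$ and $(D'_K)^\star$ commute with $\Delta'_K=\Delta''_K$. This gives all the stated commutations.

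The main obstacle I expect is Step 1, namely the equivalence $(D''_K)^2=0\iff(D'_K)^2=0$. It needs the type-decomposition computation, and in particular the identity $D'_h\theta_0=0$, which I would derive from $\nabla^2=0$ combined with $\Theta_h=0$.
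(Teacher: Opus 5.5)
Your proposal is correct and follows Simpson's argument, which the paper cites without reproducing: the type-by-type expansion shows that $(D''_K)^2=0$ forces $(D'_K)^2=0$, flatness of $D_K$ then gives $[D'_K,D''_K]=0$, and the K\"ahler identities \eqref{kahleridentity} with the graded Jacobi identity yield the vanishing of the mixed terms, $\Delta'_K=\Delta''_K=\frac12\Delta_K$, and the commutation relations, with your signs checking out. Drop the heuristic aside describing $D^c_K$ as a sign-conjugate of $D_K$: it is inaccurate, since the paper's formulas give $D^c_K=\dbar-D'_h-2\bar\theta_0$ while your substitution produces $\dbar-D'_h-2\theta_0$, and the argument does not use it.
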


In the compact K\"ahler case, as $\Delta_K$ is a smooth elliptic operator, the Hodge decomposition theorem for $\Delta_K$ holds true \cite[Section 2]{Sim92}. Together with Proposition~\ref{hodgelap}, this implies the $D'_K\circ D''_K$-lemma ($= D_K \circ D^c _K$) \cite[Lemma 2.1]{Sim92}, which is crucial for many applications.
In the quasi-compact K\"ahler case, one would like to establish an analogous Hodge decomposition theorem for $\Delta_K$, and hence a $D_K \circ D^c _K$-lemma. This is the aim of the next subsection.

\subsection{Hodge theory for logarithmic holomorphic flat line bundles}\label{hodgeloc}

To establish a Hodge decomposition theorem in the quasi-compact, twisted Kähler case, we start by choosing a harmonic metric $h_L$ on $L$.
Corresponding to the connected component of $M_B(X\setminus D)$ containing the image $\rho(L,\nabla) \in M_{DR}(X/D)$ we can associate a decomposition of the divisor $D$ 
\begin{equation}\label{decomp}
	D = \Delta_1 + \Delta_2 + \Delta_3 ,
\end{equation}
such that the following hold:
\begin{itemize}
	\item $\Delta_2 \subset D$ is the union of components $D_i$ such that there exists a form
	$s \in H^0(X, \Omega_X^1(D))$ with $\Res_{D_i}(s) \neq 0$.
	
	\item $\Delta_1 \subset D \setminus \Delta_2$ consists of components $D_i$ such that
	$\Res_{D_i}(L,\nabla) = 0$.
	
	\item $\Delta_3 := D \setminus (\Delta_1 \cup \Delta_2)$.
\end{itemize}

We will not provide a detailed account about this important fact, but refer to \cite{CDHP}, Propositions 6.11 and 6.12. However, for the comfort of the reader, we list next a few basic properties. There exist a harmonic metric $h_L$ on $L$ such that the following are verified.

\begin{enumerate}
\smallskip

\item[\rm (1)] The equality 
\[
\Res_{Y}(L,\nabla) = -\nu_{Y}(h_L)
\]
holds for any component $Y$ of $\Delta_1 + \Delta_3$, and the common value above is a rational number.
\smallskip

\item[\rm (2)] For any component $W$ of $\Delta_2$, we have $\nu(h_L) \neq -\Res(L,\nabla)$ along $W$. 
\smallskip

\item[\rm (3)] The equality \[H^0(X, \Omega_X^1(\log D))= H^0(X, \Omega_X^1(\log \Delta_2))\] 
holds, and as a consequence, we have 
\[\Res_{Y}(L,\nabla) = \Res_{Y}(L',\nabla')\]
for any component $Y$ of $\Delta_1 + \Delta_3$, provided that $\rho(L,\nabla)$ and $\rho(L',\nabla')$ belong to the same connected component of  $M_{DR}(X/D)$.
\smallskip

\item[\rm (4)] We can assume that $\Rel  \Res_{D_i}(L,\nabla) \in [0,1[$ after replacing $(L,\nabla)$ by 
$(L \otimes \textstyle\prod \mathcal{I}_{D_i}^{a_i}, \nabla' )$, where $\nabla' $ is the natural flat connection induced by $\nabla$. This comes from the fact that 
the map \eqref{introsurjj} has the following property 
\[
	\rho(L,\nabla)
	=
	\rho\bigl(L \otimes \textstyle\prod \mathcal{I}_{D_i}^{a_i}, \nabla'\bigr).
\]
%and thus we have
%\begin{equation}\label{rel2}
%	\Res_{D_i}(L,\nabla)
%	=
%	\Res_{D_i}\bigl(L \otimes \textstyle\prod \mathcal{I}_{D_i}^{a_i}, \nabla'\bigr)
%	+ a_i.
%\end{equation}
%It follows that we can 
\end{enumerate}

\medskip

The above discussion can be summarised as follows.
\begin{lemme}\label{choicemetric} We can construct a harmonic metric $h_L$ on $L$ such that the next assertions hold.
	\begin{itemize}
		\item $\nu_{Y}(h_L) = -\Res_{Y}(L,\nabla) = 0$ for any component $Y$ of $\Delta_1$;
		\item $\nu_{Y}(h_L) \neq -\Res_{Y}(L,\nabla)$ and $\nu_{Y}(h_L) < 0$ for any component $Y$ of $\Delta_2$;
		\item $\nu_Y(h_L) = -\Res_Y(L,\nabla) \in ]-1,0[ \cap \mathbb Q$ for any component $Y$ of $\Delta_3$.
	\end{itemize}
\end{lemme}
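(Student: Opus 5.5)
The plan is to start from the harmonic metric produced in Proposition \ref{firstchern} and then correct it by a twist $h_L e^{-\phi}$, where $\ddc\phi=\sum_i b_i[D_i]$ and the real coefficients satisfy $\sum_i b_i c_1(D_i)=0$. Any such twist keeps $i\Theta=0$ on $X\setminus D$, so it remains harmonic. Independently of the metric, we may also replace $(L,\nabla)$ by $(L\otimes\prod\mathcal I_{D_i}^{a_i},\nabla')$ as in item (4). This changes the residues by integers without changing $\rho(L,\nabla)$, and it lets us normalise $\Rel\Res_{D_i}(L,\nabla)\in[0,1[$.

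The first step is to treat $\Delta_1\cup\Delta_3$. Item (1) gives a harmonic metric with $\nu_Y(h_L)=-\Res_Y(L,\nabla)\in\mathbb Q$ for every $Y\subset\Delta_1+\Delta_3$, so these residues are real. After the normalisation they lie in $[0,1[$, and the Lelong numbers then lie in $]-1,0]$. On $\Delta_1$ the residue vanishes by definition, so $\nu_Y=0$ there. On $\Delta_3$ the residue is nonzero, so $\nu_Y\in]-1,0[\cap\mathbb Q$. I would check that the normalising twist is compatible with item (1). Tensoring by $\mathcal I_{D_i}^{a_i}$ shifts both the residue and the Lelong number of the induced metric by the same integer, with opposite signs, so the identity $\nu=-\Res$ is preserved.

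The second step is to treat $\Delta_2$, and this is where the real work lies. By definition of $\Delta_2$, for each component $W\subset\Delta_2$ there is a logarithmic form $s\in H^0(X,\Omega^1_X(\log D))$ with $\Res_W(s)\ne0$. Taking $\Rel$ of residues of such forms, via the same $\dbar$-computation as in Proposition \ref{firstchern} applied to the trivial bundle, gives real relations $\sum_{W\subset\Delta_2} r_W c_1(W)=0$. Item (3) says these forms have poles only along $\Delta_2$, so the relations involve only components of $\Delta_2$. I would choose a generic combination of the available $s$ so that $r_W\ne0$ for every $W\subset\Delta_2$; this is possible by genericity, since the bad set is a finite union of hyperplanes. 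I would then set $b_W=t\,r_W$ and twist by the corresponding $\phi$. This moves only the Lelong numbers along $\Delta_2$, by $t r_W$, and leaves $\Delta_1$ and $\Delta_3$ unchanged.

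The remaining obstacle is the sign. We need $\nu_W<0$ and $\nu_W\ne-\Res_W$ for all $W\subset\Delta_2$ simultaneously, which is slightly delicate because the $r_W$ may have mixed signs. Avoiding the finitely many values of $t$ where $\nu_W=-\Res_W$ is harmless. For negativity I would additionally tensor $L$ by $\mathcal I_W^{k}$ for large integers $k$ on the components of $\Delta_2$ only, shifting each $\nu_W$ down by integers; as above, this does not change $\rho$ (item (4)). The price is that $\Rel\Res_W$ is no longer in $[0,1[$, but the statement does not require that for $\Delta_2$. Finally, I would arrange rationality of $\nu_W$ if needed by choosing $t$ suitably, although the lemma as stated only requires rationality on $\Delta_3$.
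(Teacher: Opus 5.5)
Your argument is correct. It uses the same ingredients as the paper, which does not prove the lemma itself but summarises facts (1)–(4) from \cite{CDHP}:
\begin{itemize}
\item a harmonic metric from Proposition \ref{firstchern};
\item twists $h_L e^{-\phi}$ by $\ddc$-potentials of real relations $\sum b_i c_1(D_i)=0$, which are supported on $\Delta_2$;
\item the shifts $L\otimes\prod\mathcal I_{D_i}^{a_i}$, which preserve $\rho$ and $\nu+\Res$.
\end{itemize}

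Where you go further is on $\Delta_2$. First, you make (2) explicit: take a generic relation with all $r_W\neq0$, then avoid finitely many values of $t$. For the genericity, note that $s\mapsto \Rel\Res_W(s)$ is a nonzero real functional on the complex space $H^0(X,\Omega^1_X(\log D))$, since $s$ may be replaced by $is$. Second, you obtain $\nu_W<0$ by an extra integer shift $L\otimes\mathcal I_W^{k_W}$, and thereby give up $\Rel\Res_W\in[0,1[$ on $\Delta_2$. The paper's summary instead keeps the normalisation (4) on all components and does not say how negativity along $\Delta_2$ is achieved. Also, $\Delta_1$ and $\Delta_3$ must be read off from the normalised pair, since the third bullet needs $\Res_Y\in\,]0,1[$; your proof implicitly does this.

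Your extra shift is genuinely needed. Take a fibration $f:X\to\mathbb P^1$, $D=f^{-1}(0)+f^{-1}(\infty)$ (smooth fibres), $L=\O_X$ and $\nabla=d+i\mu f^\star(dz/z)$ with $\mu\in\mathbb R$.
\begin{itemize}
\item Both fibres lie in $\Delta_2$, via $f^\star(dz/z)$.
\item The residues $\pm i\mu$ already have real part $0\in[0,1[$.
\item Every harmonic metric on $\O_X$ satisfies $\nu_{D_1}+\nu_{D_2}=0$, because $c_1(D_1)=c_1(D_2)\neq0$ and $c_1(\O_X)=0$.
\end{itemize}
So without changing $L$ along $\Delta_2$, the second bullet of the lemma cannot hold, and your route is what makes the statement true.

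The price is that residues along $\Delta_2$ may become positive integers. In the same example with $\mu=0$, any admissible shift makes some residue lie in $\mathbb N^\star$. This does not affect the lemma as stated, but it conflicts with the hypothesis of Theorem \ref{Deli} that the paper later combines with this metric.
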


\medskip

We now define a metric on the base manifold~$X$.
Let $\omega_D$ be a Kähler metric on $X \setminus (\Delta_2 + \Delta_3)$ with
Poincaré singularities along the support of~$\Delta_2$ and conic singularities
of angle $2\pi(1-1/k)$ along each component of~$\Delta_3$, where
$k \geq 3$ is a sufficiently large and divisible integer.
In particular, $k \cdot \Res_{\Delta_3}(L,\nabla) \in \mathbb{Z}$.

Such a metric can be constructed explicitly.
Write $\Delta_2 = \sum_{i=1}^m D_i$ and $\Delta_3 = \sum_{i=m+1}^N D_i$.
We define
\begin{equation}\label{m1}
	\omega_D := \omega_X
	+ \sum_{i=m+1}^N \sqrt{-1}\,\ddbar |s_i|^{2/k}
	- \sum_{i=1}^m \sqrt{-1}\,\ddbar \log \log \frac{1}{|s_i|^2},
\end{equation}
where $\omega_X$ is a fixed Kähler metric on $X$, chosen sufficiently positive so that
$\omega_D \geq \frac{1}{2}\omega_X$.
Here $s_i$ denotes the canonical section of $\mathcal{O}_X(D_i)$ with respect to a smooth metric.

\begin{remark}
At a first sight, the choice of singularities of $\omega_D$ may seem artificial/arbitrary. However, in view of the next results we will discuss, this seems to be the right choice.
\end{remark}

\medskip

We now apply the techniques developed in Section~1 on the quasi-compact manifold
$X \setminus \Delta_2$, with respect to the metric $(L,h_L,\omega_D)$.
In particular, all differential operators
$D_K$, $D_K^\star$, $\Delta_K$, etc.\ are defined
with respect to the conic structure $(1-\frac{1}{k})\Delta_3$.
\medskip

The main estimate obtained in~\cite{CDHP} is the following. 
For similar considerations (in a simplified context), we refer to the elegant article \cite{Biq97}.

\begin{thm}\label{keyinequ}\cite[Thm 7.9]{CDHP}
Let $h_L$ be the harmonic metric in Lemma~\ref{choicemetric}.  There exists a constant $C>0$ such that for any
	$L$-valued form $u$ with compact support in $X\setminus \Delta_2$,
	smooth in conic sense, the inequality
	\begin{equation}\label{mainineq}
		\|u\|_{L^2}^2
		+ \int_X \langle \Delta_K u, u \rangle \, dV_{\omega_D}
		\geq \frac{1}{C}
		\int_X \Bigl(
		|u|^2 |\theta_0|^2
		+ \langle [D_{h_L}, D_{h_L}^\star] u, u \rangle
		\Bigr)\, dV_{\omega_D}
	\end{equation}
	holds.
\end{thm}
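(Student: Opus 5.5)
We first rewrite the left-hand side with the Hodge identities for the harmonic metric. By Proposition~\ref{hodgelap}, $\Delta_K = 2\Delta''_K$ on $X\setminus D$. Since $u$ has compact support in $X\setminus\Delta_2$ and is smooth in the conic sense along $\Delta_3$, integration by parts is justified (Proposition~\ref{int}, using the orbifold charts of Proposition~\ref{orbderiv} near $\Delta_3$). Hence
\[
\int_X\langle\Delta_K u,u\rangle\,dV_{\omega_D}=2\|D''_K u\|^2+2\|(D''_K)^\star u\|^2 .
\]
We then expand $D''_K=\dbar+(\theta_0-\ol\theta_0)$ and its adjoint. The natural comparison is with the Bochner--Kodaira--Nakano identity for the Chern connection of $(L,h_L)$. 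Using the Kähler identities \eqref{kahleridentity}, we write $\Delta_K$ as the Chern Laplacian $[D_{h_L},D_{h_L}^\star]$ (or its $\dbar$-part), plus a zeroth-order term of the form $[\theta_0,\theta_0^\star]+[\ol\theta_0,\ol\theta_0^\star]$, plus cross terms. The cross terms are first order in $u$ and linear in $\nabla\theta_0$ or $\theta_0$. In the rank-one case $\theta_0$ is a scalar $(1,0)$-form, so the zeroth-order term is pointwise $\gtrsim|\theta_0|^2|u|^2$ up to the usual contraction constants. Summing the $(p,q)$ contributions gives this lower bound.

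The heart of the matter is the analysis of $\theta_0$ near the boundary. Away from $D$ everything is smooth and bounded, so the $\|u\|^2$ term absorbs errors. Near $\Delta_1$ and $\Delta_3$, Lemma~\ref{choicemetric} gives $\nu_Y(h_L)=-\Res_Y(L,\nabla)$. The logarithmic parts of $D'_{h_L}$ and $\nabla$ therefore cancel, $\theta_0$ is smooth (in conic charts), and again every term is controlled by $C\|u\|^2$.

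Near $\Delta_2$ we have $\theta_0\sim c\,dz_1/z_1+$ smooth with $c=\tfrac12(\nu+\Res)\neq0$. With respect to the Poincaré metric, $|dz_1/z_1|_{\omega_D}\sim 1/\log(1/|z_1|)$, which is bounded. The cross terms involve $\nabla\theta_0$ and curvature-type quantities $\dbar\theta_0$, $\partial\ol\theta_0$, which vanish off $D$ for this model. Since the model form $dz_1/z_1$ is parallel up to lower order for the Poincaré metric, they are $O(|\theta_0|)$ times $|\nabla u|$ or $|u|$. We absorb them via Cauchy--Schwarz, $2ab\le\varepsilon a^2+\varepsilon^{-1}b^2$. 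This splits off a small multiple of $\|D''_Ku\|^2+\|(D''_K)^\star u\|^2$ and a multiple of $\|u\|^2$, leaving $\tfrac1C\int|\theta_0|^2|u|^2$.

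Similarly, the term $\langle[D_{h_L},D_{h_L}^\star]u,u\rangle$ is compared to $\|D''_Ku\|^2+\|(D''_K)^\star u\|^2$. The difference between $D''_K$ and $\dbar$ (resp.\ adjoints) is multiplication by $\theta_0-\ol\theta_0$, which is bounded in $\omega_D$-norm. So
\[
\|\dbar u\|^2+\|\dbar^\star u\|^2\le 2\bigl(\|D''_Ku\|^2+\|(D''_K)^\star u\|^2\bigr)+C\int|\theta_0|^2|u|^2 .
\]
Combining this with the previous step, and choosing constants suitably (by adding a small multiple of it), yields \eqref{mainineq}.

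The main obstacle is the precise bookkeeping near $\Delta_2$. There the metric $\omega_D$ is complete, its torsion-type terms $\nabla\omega_D$ enter the commutators, and one must verify that all cross terms decay at least like $|\theta_0|\cdot O(1)$ in $\omega_D$-norms. I would check this first in the model $\DD^\star\times\DD^{n-1}$ with the exact Poincaré metric and $\theta_0=c\,dz_1/z_1$ in a frame adapted to Poincaré coordinates, where the computation is explicit. The general case then follows by a partition of unity, since the quasi-isometry to the model has bounded geometry.
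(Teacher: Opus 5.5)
Your reduction $\int_X\langle\Delta_K u,u\rangle\,dV_{\omega_D}=2\|D''_Ku\|^2+2\|(D''_K)^\star u\|^2$ is fine, and so is the pointwise identity $[\theta_0,\theta_0^\star]=|\theta_0|^2$. The analysis near $\Delta_2$, however, rests on a false computation.

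For the Poincar\'e factor $\frac{i\,dz_1\wedge d\ol z_1}{|z_1|^2\log^2|z_1|^2}$ of $\omega_D$ one has $|dz_1|^2_{\omega_D}=|z_1|^2\log^2|z_1|^2$. Hence $|dz_1/z_1|_{\omega_D}\sim\log(1/|z_1|^2)\to\infty$. It is the vector field $z_1\partial_{z_1}$, not the form $dz_1/z_1$, whose norm is $\sim 1/\log(1/|z_1|)$. Since the residue of $\theta_0$ along $\Delta_2$ is nonzero, $|\theta_0|_{\omega_D}$ blows up like $\log(1/|s_{D_i}|)$.

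This growth is the entire content of the theorem. If $\theta_0$ were bounded, $\int|\theta_0|^2|u|^2\le C\|u\|^2$ would be trivial, and the consequence drawn right after the statement ($\int\log^2|s_{D_i}|\,|u|^2<\infty$ whenever $u,\Delta_Ku\in L^2$) would say nothing. So your claims that $\theta_0-\ol\theta_0$ is bounded in $\omega_D$-norm, and that errors are absorbed by $\|u\|^2$, fail exactly where the work is.

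With the correct growth, your $\varepsilon$-absorption of the first-order cross term is circular. That term has size $\int|\theta_0|\,|\nabla u|\,|u|$, the same order as both main terms. Splitting it produces one of two things:
\begin{itemize}
\item $\varepsilon^{-1}\int|\theta_0|^2|u|^2$, which cannot be absorbed by $2\int|\theta_0|^2|u|^2$;
\item $\varepsilon^{-1}\|\nabla u\|^2$, but the left-hand side controls $\|\dbar u\|^2+\|\dbar^\star u\|^2$ only through $\|D''_Ku\|^2+\|(D''_K)^\star u\|^2$, which costs $C\int|\theta_0|^2|u|^2$ again.
\end{itemize}

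What is needed is a comparison with sharp constants, which the paper's sketch supplies.
\begin{itemize}
\item Because $h_L$ is harmonic, $\dbar\theta_0=0$ on $X\setminus D$. Since $\omega_D$ is K\"ahler, $\nabla^{0,1}\theta_0=0$. This gives the exact identity $[D'_{h_L},\theta_0^\star]u=\theta_0^\star\lrcorner\nabla^{1,0}_{\mathrm{cov}}u$ with no zeroth-order part, hence a pointwise bound by $|\theta_0|\,|\nabla^{1,0}_{\mathrm{cov}}u|$ with constant one.
\item Siu's identity gives $\|\nabla^{1,0}_{\mathrm{cov}}u\|^2=\|\dbar u\|^2+\|\dbar^\star u\|^2+O(\|u\|^2)$, again with constant one. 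This uses that $\omega_D$ has bounded curvature and $h_L$ is flat off $D$.
\item Set $A=\int\langle[D_{h_L},D_{h_L}^\star]u,u\rangle=2(\|\dbar u\|^2+\|\dbar^\star u\|^2)$ and $B=\int|\theta_0|^2|u|^2$. The expansion of $[D_K,D_K^\star]$ then gives $\|D_Ku\|^2+\|D_K^\star u\|^2\ge A+4B-2\sqrt2\sqrt{AB}-C\|u\|^2$, up to the term coming from $[\dbar,\theta_0^\star]$.
\item Since $A+4B-2\sqrt2\sqrt{AB}=(\sqrt A-\sqrt2\sqrt B)^2+2B$, this form is positive definite.
\end{itemize}
The only genuinely lower-order piece is $[\dbar,\theta_0^\star]$. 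It involves $\nabla'\theta_0=O(1+|\theta_0|)=o(|\theta_0|^2)$ near $\Delta_2$, and there your Cauchy--Schwarz absorption does work.

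Your $D''_K$ route can be repaired the same way. There the first-order cross term is $-2\mathrm{Re}\int\langle\nabla^{0,1}_W u,u\rangle$ with $|W|=|\theta_0|$, and $\|\nabla^{0,1}u\|^2=\|\dbar u\|^2+\|\dbar^\star u\|^2+O(\|u\|^2)$. With $a=\|\dbar u\|^2+\|\dbar^\star u\|^2$ and $b=\int|\theta_0|^2|u|^2$, this yields $a+2b-2\sqrt{ab}=(\sqrt a-\sqrt b)^2+b$. That form is positive definite only because the constants come out exactly right, not because of any small-$\varepsilon$ absorption.
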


\begin{proof}[Idea of the proof]
	We briefly explain the idea of the proof.
	Using the identity $D_K = D_{h_L} + 2\theta_0$, we obtain
	\begin{equation}\label{form40}
		[D_K, D_K^\star]
		= [D_{h_L}, D_{h_L}^\star]
		+ 4[\theta_0, \theta^\star_0]
		+ 2[D_{h_L}, \theta^\star _0]
		+ 2[\theta_0, D_{h_L}^\star].
	\end{equation}
	We can show that
	\[
	[D_{h_L}', \theta^\star _0] u
	= \theta^\star_0 \,\lrcorner\, \nabla^{1,0}_{\mathrm{cov}} u,
	\]
	where $\nabla^{1,0}_{\mathrm{cov}} u$ denotes the $(1,0)$-covariant derivative.
	Moreover, using \cite{Siu82}, we have
	\[
	\|\nabla^{1,0}_{\mathrm{cov}} u\|^2
	\simeq
	\|\dbar u\|^2 + \|\dbar^\star u\|^2.
	\]
	Combining these three ingredients yields \eqref{mainineq}.
\end{proof}

Now we explain several consequences of Theorem \ref{keyinequ}.

\begin{itemize}
	\item If $u \in L^2$ and $\Delta_K u \in L^2$, then it follows from \eqref{mainineq} that
	\[
	\int_X \log^2 |s_{D_i}| \, |u|^2 \, \omega_D^n < +\infty
	\]
	for any $D_i \subset \Delta_2$. Indeed, this is due to the fact that the residues of $\theta_0$ on 
	the components of $\Delta_2$ are non-zero. 
	
	\item More generally, we consider the weighted measure
	\[
	d\mu_a := \prod_{D_i \subset \Delta_2} \log^{2a_i}(|s_{D_i}|^2)\, dV_{\omega_D},
	\]
	where $a_i \in \mathbb N$.
	Then there exist positive constants $C_i$ such that the inequality
	\begin{align}
		C_0 \int_X |u|^2 \, d\mu_a
		+ \int_X \langle \Delta_K u, u \rangle \, d\mu_a
		\geq {} &
		C_1 \int_X |\theta_0|_{\omega_D}^2 |u|^2 \, d\mu_a  \cr
		& {} + C_2 \int_X \langle [D_{h_L}, D^\star_{h_L}] u, u \rangle \, d\mu_a
		\nonumber
	\end{align}
	holds.
	
	\item Using the two properties above, we can show that for a harmonic form $u$, namely $u \in L^2$ and $\Delta_K u = 0$ on $X \setminus \Delta_2$, we have
	\begin{equation}\label{devrgood}
		\int_X |\nabla^k_{\mathrm{cov}} u|^2 \, d\mu_a < +\infty
	\end{equation}
	for any $a_i \in \mathbb N$ and $k \in \mathbb N$.
	Here $\nabla_{\mathrm{cov}}$ denotes the covariant derivative with respect to $(\omega_D, h_L)$.
\end{itemize}

\medskip

Note that since $\omega_D$ is complete along $\Delta_2$, there are several ways to define smooth forms on a complete metric space.
Motivated by the definition of the Schwartz space on $\mathbb R^n$ and by \eqref{devrgood}, we introduce the following definition.

\begin{defn}\label{smoothK}\cite[Definition 8.9]{CDHP}
		In the setting of this subsection, we denote by $\cC^\infty_K(X, L)$ the space of $L$-valued forms $u$ which are smooth in conic sense on $X \setminus \Delta_2$ and satisfy
	\[
	\int_X |\nabla^k_{\mathrm{cov}} u|^2 \, d\mu_a < +\infty
	\]
	for all $k \in \mathbb N$ and for all multi-indices $a = (a_i)_{i \in I} \in \mathbb N^I$.
\end{defn}

Here is another important property of the space $\cC^\infty_K(X, L)$.
\begin{remark}\label{realsmooth}
	Let $\beta \in H^0\bigl(X, \Omega_X^1(\log D)\bigr)$. By the definition of $\Delta_2$, the form $\beta$ has only logarithmic poles along $\Delta_2$. Then, by Definition~\ref{smoothK}, if $f \in \mathcal{C}^\infty_K(X, L)$, we have
	\[
	\beta \wedge f \in \mathcal{C}^\infty_K(X, L).
	\]
	
\end{remark}

Thanks to Theorem \ref{keyinequ} and the key conditions in Lemma \ref{choicemetric}, we obtain an elliptic regularity result for the operator $\Delta_K$.
In particular, we have the following statement.

\begin{proposition}
	Let $g \in L^2$ and $f \in \mathrm{Dom}(\Delta_K)$ satisfy $\Delta_K f = g$.
	If $g \in \cC^\infty_K(X, L)$, then
	\[
	f \in \cC^\infty_K(X, L), \qquad
	D_K f \in \cC^\infty_K(X, L), \qquad
	D_K^\star f \in \cC^\infty_K(X, L).
	\]
\end{proposition}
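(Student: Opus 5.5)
The plan is to prove elliptic regularity separately in the two regimes of the singular set. Near $\Delta_3$ the metric $\omega_D$ is conic and $h_L$ has rational Lelong numbers. Near $\Delta_2$ the metric is of Poincaré type, so it is complete, and the weighted integrability is what matters. We then bootstrap the weighted $L^2$ control of all covariant derivatives using Theorem~\ref{keyinequ} together with its weighted version stated above.

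\emph{Step 1: local smoothness in the conic sense on $X\setminus\Delta_2$.} Away from $D$, $\Delta_K$ is a second order elliptic operator with smooth coefficients; by Proposition~\ref{hodgelap} it equals $2\Delta''_K$, and its principal part is that of the usual Laplacian. Near $\Delta_3$ we pass to the local $k$-ramified covers $\pi_i$ and the quotients $f_i=\pi_i^\star f/w_i^{ak}$. The analogue of Proposition~\ref{orbderiv}(3) should give $\pi_i^\star(\Delta_K f)=w_i^{ak}\,\Delta_{K,\mathrm{sm}} f_i$, where $\Delta_{K,\mathrm{sm}}$ is elliptic with smooth coefficients on $U_i$. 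This relies on Lemma~\ref{choicemetric}: on $\Delta_3$ we have $\nu_Y(h_L)=-\Res_Y(L,\nabla)$, so the log pole of $\theta_0$ cancels after ramification. Since $g_i$ is smooth and $f_i\in L^2_{\mathrm{loc}}$ solves the equation weakly (the conic singular set has capacity zero for $L^2$ forms, using the cut-offs \eqref{cutoff}), classical elliptic regularity gives that $f_i$ is smooth. Hence $f$ is smooth in the conic sense on $X\setminus\Delta_2$.

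\emph{Step 2: weighted estimates near $\Delta_2$.} Fix a multi-index $a$ and cut-offs $\mu_\ep$ adapted to $\Delta_2$. Apply the weighted inequality to $\mu_\ep f$. Because $\omega_D$ is Poincaré along $\Delta_2$, we have $|d\mu_\ep|_{\omega_D}$ bounded and $\to 0$, and the derivatives of the weight $\log^{2a_i}|s_{D_i}|^2$ are bounded by $\log^{2a_i-1}$ times a bounded factor. Induction on $|a|$ then gives
\[
\int_X|\theta_0|^2|f|^2\,d\mu_a<\infty .
\]
Since $|\theta_0|\gtrsim|\log|s_{D_i}||^{-1}\cdot|\log|s_{D_i}||$ (the residue of $\theta_0$ on $\Delta_2$ is nonzero by Lemma~\ref{choicemetric}), this upgrades integrability from $d\mu_a$ to $d\mu_{a+1}$; this is the mechanism behind \eqref{devrgood}. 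The same inequality controls $\langle[D_{h_L},D_{h_L}^\star]f,f\rangle$, hence $\|\nabla_{\mathrm{cov}}f\|^2_{d\mu_a}$ via the Bochner/Siu comparison. For higher derivatives, we commute $\nabla^j_{\mathrm{cov}}$ with $\Delta_K$. The commutators involve only curvature of $(\omega_D,h_L)$ and covariant derivatives of $\theta_0$, which are bounded in Poincaré quasi-coordinates. Applying the estimate to $\nabla^j_{\mathrm{cov}}f$, with the right-hand side built from $\nabla^j g$ and lower order terms, induction on $j$ yields $f\in\cC^\infty_K(X,L)$.

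\emph{Step 3: derivatives.} $D_K$ and $D_K^\star$ are first order operators whose coefficients involve $\theta_0$, which is bounded with bounded covariant derivatives for $\omega_D$ along $\Delta_2$ and smooth in the conic sense along $\Delta_3$. So $D_Kf$ and $D_K^\star f$ inherit all the weighted bounds of $f$, and they are conic-smooth by Step 1.

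The main obstacle is Step 2: justifying the cut-off and induction arguments. Specifically, one must show that $\mu_\ep f$ lies in the domain where \eqref{mainineq} applies. One must also check that the error terms from $d\mu_\ep$ and from differentiating the weights are absorbed, which requires $f\in\mathrm{Dom}(\Delta_K)$ together with completeness of $\omega_D$. I would handle this with Poincaré quasi-coordinate charts (bounded geometry) and uniform interior estimates in each chart.
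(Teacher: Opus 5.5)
Your outline follows the route the paper indicates. That route uses Theorem~\ref{keyinequ} and its weighted version, the non-vanishing of the residue of $\theta_0$ along $\Delta_2$ (Lemma~\ref{choicemetric}) to gain powers of $\log|s_{D_i}|$, and ramified covers along $\Delta_3$. However, a claim you rely on in Steps~2 and~3 is false: $\theta_0$ is \emph{not} bounded for $\omega_D$ near $\Delta_2$. Near a generic point of $D_i\subset\Delta_2$ one has $\theta_0=c_i\,\frac{dz_1}{z_1}+(\text{smooth})$ with $c_i\neq 0$. For the Poincaré metric $|dz_1/z_1|_{\omega_D}\simeq|\log|z_1|^2|$, so $|\theta_0|_{\omega_D}\simeq|\log|s_{D_i}|^2|$, and its covariant derivatives are of the same order. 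This growth is exactly what Step~2 needs to pass from $d\mu_a$ to $d\mu_{a+1}$. Your displayed lower bound simplifies to $|\theta_0|\gtrsim 1$, which gives no gain; it should read $|\theta_0|_{\omega_D}\gtrsim|\log|s_{D_i}||$. That is incompatible with the boundedness you assert afterwards.

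This has three consequences. First, the commutators $[\nabla^j_{\mathrm{cov}},\Delta_K]$ contain zeroth-order terms of size $\log^2|s_{D_i}|$ (from $4[\theta_0,\theta_0^\star]$ in \eqref{form40}) and first-order terms of size $|\log|s_{D_i}||$. Second, interior estimates in Poincaré quasi-coordinates are not uniform: their constants grow polynomially in $|\log|s_{D_i}||$. Third, in Step~3, $\theta_0\wedge f\in L^2(d\mu_a)$ follows only from $f\in L^2(d\mu_{a+1})$, not from $f\in L^2(d\mu_a)$. Similarly, $|d(\log^{2a_i}|s_{D_i}|^2)|_{\omega_D}\simeq\log^{2a_i}|s_{D_i}|^2$. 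So weight derivatives are of the same order as the weight (with bounded relative size), not one power of $\log$ lower.

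The argument can be repaired within your framework. Run the bootstrap jointly in the derivative order and the weight: deduce $\nabla^j_{\mathrm{cov}}f\in L^2(d\mu_a)$ from bounds on lower-order derivatives against larger weights, and from bounds on $g$. Each occurrence of $\theta_0$ or its derivatives costs one factor $|\log|s_{D_i}||$. This is harmless precisely because Definition~\ref{smoothK} quantifies over \emph{all} $a\in\mathbb N^I$; this is the paper's Schwartz-space analogy, and also the reason Remark~\ref{realsmooth} holds.

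A second point: \eqref{mainineq} is an estimate for $L$-valued forms, so it does not apply verbatim to the tensor $\nabla^j_{\mathrm{cov}}f$. It is cleaner to use that $D_K$ and $D_K^\star$ commute with $\Delta_K$ (Proposition~\ref{hodgelap}). Then $\Delta_K(D_Kf)=D_Kg$ and $\Delta_K(D_K^\star f)=D_K^\star g$ are again equations for $L$-valued forms with right-hand sides in $\cC^\infty_K(X,L)$, so the weighted inequality applies to them. You can then recover covariant derivatives from the Siu-type comparison in the proof of Theorem~\ref{keyinequ}, paying one $\log$-weight each time $\dbar$ is separated from $D''_K=\dbar+\theta_0-\ol\theta_0$.
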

\medskip

Finally, the following strong Hodge decomposition theorem is established.

\begin{thm}\label{smoothhodge}\cite[Thm 8.20]{CDHP}
We have the strong Hodge decomposition
	\[
	\cC^\infty_K(X, L)
	= \ker \Delta_K \oplus \im \Delta_K
	= \ker \Delta_K \oplus \im D_K \oplus \im D_K^\star
	\]
	in conic sense.
	Moreover, $\ker \Delta_K$ is finite-dimensional.
\end{thm}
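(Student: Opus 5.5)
The plan is the standard route through a weak $L^2$ decomposition, upgraded to the smooth category by the elliptic regularity proposition just stated. First I would set up $\Delta_K$ as a self-adjoint operator on $L^2(X\setminus\Delta_2, L)$, computed with respect to $(\omega_D,h_L)$ and in the conic sense along $\Delta_3$. Since $\omega_D$ is complete near $\Delta_2$, and the conic part is handled by the local $k$-ramified uniformisations exactly as in Theorem \ref{CoHo1}, the Andreotti--Vesentini density argument shows that forms smooth in conic sense with compact support in $X\setminus\Delta_2$ are dense in $\mathrm{Dom}(D_K)\cap\mathrm{Dom}(D_K^\star)$ for the graph norm. In particular the maximal and minimal extensions agree, $D_K^\star$ is the Hilbert adjoint of $D_K$, and $\Delta_K=D_KD_K^\star+D_K^\star D_K$ (Gaffney extension) is self-adjoint and nonnegative.

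The key step is to show that $\Delta_K$ has closed range and finite-dimensional kernel. I would get this by proving that the unit ball of the quadratic form $Q(u)=\|u\|^2+\|D_Ku\|^2+\|D_K^\star u\|^2$ is precompact in $L^2$. Here I would use Theorem \ref{keyinequ}, extended by density to $\mathrm{Dom}(Q)$. Since the residues of $\theta_0$ along the components of $\Delta_2$ are nonzero (Lemma \ref{choicemetric}), we have $|\theta_0|^2_{\omega_D}\gtrsim \log^2|s_{D_i}|^2$ near $\Delta_2$. Hence \eqref{mainineq} controls $\int |u|^2\log^2|s_{\Delta_2}|\,dV$, and this weight tends to infinity at $\Delta_2$. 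Its second term, combined with Bochner--Kodaira and Siu's identity (as in the proof sketch of Theorem \ref{keyinequ}), controls first covariant derivatives. So the $L^2$ mass of a $Q$-bounded sequence cannot escape to $\Delta_2$. On any compact subset of $X\setminus\Delta_2$, Rellich's theorem in the conic setting gives convergence of a subsequence. A diagonal argument then yields precompactness. Consequently $\Delta_K$ has compact resolvent, hence discrete spectrum, finite-dimensional $\ker\Delta_K$, and closed range, and we obtain
\[
L^2=\ker\Delta_K\oplus\im\Delta_K=\ker\Delta_K\oplus\overline{\im D_K}\oplus\overline{\im D_K^\star},
\]
where the last two summands are in fact closed. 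I expect this compactness step to be the main obstacle: one must check that \eqref{mainineq} really holds for all of $\mathrm{Dom}(Q)$, not only for compactly supported forms, and that the curvature term $\langle[D_{h_L},D_{h_L}^\star]u,u\rangle$ is bounded below by $-C|u|^2$ for the Poincaré-type metric. The latter holds because the Poincaré metric has bounded geometry.

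Finally I would pass to $\cC^\infty_K(X,L)$. Let $g\in\cC^\infty_K(X,L)\subset L^2$. Write $g=g_0+g_1$ with $g_0$ the orthogonal projection onto $\ker\Delta_K$. Every harmonic form lies in $\cC^\infty_K$ by \eqref{devrgood}, so $g_0\in\cC^\infty_K$ and therefore $g_1\in\cC^\infty_K$. By closed range, $g_1=\Delta_Kf$ for some $f\in\mathrm{Dom}(\Delta_K)$ orthogonal to the kernel. The elliptic regularity proposition above gives $f$, $D_Kf$ and $D_K^\star f$ in $\cC^\infty_K$. This gives $\cC^\infty_K=\ker\Delta_K\oplus\im\Delta_K$ with images taken inside $\cC^\infty_K$. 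Writing $\Delta_Kf=D_K(D_K^\star f)+D_K^\star(D_Kf)$ gives the three-term decomposition. Orthogonality of the three pieces follows from integration by parts. This is justified as in Proposition \ref{int}, using cut-offs $\mu_\ep$ near $\Delta_2\cup\Delta_3$: the weighted integrability built into Definition \ref{smoothK} absorbs $|\partial\mu_\ep|_{\omega_D}$, which stays bounded for the Poincaré metric. The same integration by parts shows that harmonic forms satisfy $D_Ku=D_K^\star u=0$.
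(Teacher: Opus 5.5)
Your proposal is correct and is essentially the paper's argument. The paper's contradiction sketch takes a sequence with $\|f_m\|_{H^1}=1$, $\Delta_K f_m\to 0$, $f_m\perp\ker\Delta_K$; its $L^2$-mass cannot concentrate near $\Delta_2$ by Theorem~\ref{keyinequ} and Lemma~\ref{choicemetric}, and it converges strongly on compacta of $X\setminus\Delta_2$ by elliptic estimates. That is your precompactness of the $Q$-unit ball in another form, and the passage to $\cC^\infty_K(X,L)$ via \eqref{devrgood} and the elliptic regularity proposition is the same in both.

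One small correction: $[D_{h_L},D_{h_L}^\star]$ is the second-order Laplacian of the Chern connection (which is flat on $X\setminus D$), not a zeroth-order curvature term. So no pointwise lower bound or bounded-geometry argument is needed. Its integral equals $\|D_{h_L}u\|^2+\|D_{h_L}^\star u\|^2\ge 0$ and can simply be dropped in the compactness step. The local $H^1$ control on compacta comes from G{\aa}rding for $D_K+D_K^\star$.
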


\begin{proof}[Sketch of the proof]
	The main idea is as follows.
	Since the metric $\omega_D$ is complete along $\Delta_2$, $L^2$-Hodge theory \cite{bookJP} yields the decomposition
	\[
	L^2_\bullet(X, L)
	= \ker \Delta_K \oplus \overline{\im \Delta_K}
	\]
	in conic sense.
	Thus the key point is to show that $\overline{\im \Delta_K} = \im \Delta_K$.
	
	Assume by contradiction that $\overline{\im \Delta_K} \neq \im \Delta_K$.
	Then there exists a sequence $f_m$ such that $\|f_m\|_{H^1} = 1$,
	$\Delta_K f_m \to 0$, and $f_m \perp \ker \Delta_K$.
	Passing to a subsequence, we have $f_m \rightharpoonup f_0$ weakly in $H^1$,
	with $\Delta_K f_0 = 0$ and $f_0 \perp \ker \Delta_K$, hence $f_0 = 0$.
	
	On the other hand, on any compact subset $K \Subset X \setminus \Delta_2$,
	elliptic regularity implies that $f_m \to f_0$ strongly in $H^1$.
	Near $\Delta_2$, Theorem \ref{keyinequ} together with the relations in Lemma \ref{choicemetric} shows that
	the $L^2$-norm of $f_m$ is arbitrarily small.
	Consequently, $\|f_0\|_{L^2} \geq c$ for some constant $c > 0$, which is a contradiction.
	
	The finiteness of $\dim \ker \Delta_K$ follows from a similar argument.
\end{proof}

One of the main consequences of Theorem \ref{smoothhodge} is the following $\ddbar$-lemma.

\begin{cor}\label{ddbarK}
	Let $u \in \cC^\infty_K(X, L)$.
	If $u \in \im D_K \cap \ker D_K^c$ on $X \setminus \Delta_2$ in conic sense, then
	\[
	u = D_K \circ D_K^c v \qquad \text{on } X \setminus \Delta_2
	\]
	in conic sense, for some $v \in \cC^\infty_K(X, L)$.
\end{cor}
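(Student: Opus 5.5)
The plan is to copy the classical proof of the $\ddbar$-lemma, with the strong Hodge decomposition of Theorem \ref{smoothhodge} in place of compact Hodge theory, and the Kähler identities \eqref{kahleridentity} together with Proposition \ref{hodgelap} as the algebraic input. Since $D_K = D'_K + D''_K$ and $D^c_K = D''_K - D'_K$, the Laplacian of $D^c_K$ equals $\Delta'_K + \Delta''_K$ plus cross terms $[D'_K,(D''_K)^\star]$ and $[D''_K,(D'_K)^\star]$. By \eqref{kahleridentity} these cross terms vanish, exactly as in Simpson's compact case. Hence
\[
\Delta^c_K := [D^c_K,(D^c_K)^\star] = \Delta_K = 2\Delta''_K .
\]
Moreover, $D_K$ and $D^c_K$ anticommute, because $D_K D^c_K = 2 D'_K D''_K$ up to sign when $h_L$ is harmonic. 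Also $D_K, D^c_K$ and their adjoints commute with $\Delta_K$. All of these identities hold pointwise on $X\setminus(\Delta_2+\Delta_3)$, and therefore in conic sense.

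The argument then runs as follows. Write $u = D_K w$ with $w\in\cC^\infty_K(X,L)$, which is the meaning of $u\in\im D_K$ in this space. Decompose $w = H w + D^c_K a + (D^c_K)^\star b$ using Theorem \ref{smoothhodge}, where $\ker\Delta^c_K = \ker\Delta_K$ and $\im\Delta_K = \im\Delta^c_K$ give the $D^c_K$-version of the decomposition. Concretely, $w - Hw = \Delta_K G w$ with $Gw\in\cC^\infty_K$ by the elliptic regularity proposition, so we may take $a = (D^c_K)^\star G w$ and $b = D^c_K G w$, both in $\cC^\infty_K$. Harmonic forms are $D_K$-closed, so $u = D_K D^c_K a + D_K (D^c_K)^\star b$.

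It remains to kill the last term $z := D_K (D^c_K)^\star b$. Since $D_K$ and $(D^c_K)^\star$ anticommute (a consequence of the Kähler identities), $z = -(D^c_K)^\star D_K b$. The form $z$ is $D^c_K$-closed, since $D^c_K u = 0$ and $D^c_K D_K D^c_K a = 0$. Hence
\[
\|z\|^2 = -\langle D^c_K D_K b, z\rangle \quad\text{hmm}, \qquad \|z\|^2 = -\langle (D^c_K)^\star D_K b, z\rangle = -\langle D_K b, D^c_K z\rangle = 0 .
\]
Thus $u = D_K D^c_K v$ with $v = a\in\cC^\infty_K(X,L)$.

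The main obstacle is justifying the integrations by parts on the complete-plus-conic space, namely $\langle (D^c_K)^\star \alpha, \beta\rangle = \langle\alpha, D^c_K\beta\rangle$ for $\alpha,\beta\in\cC^\infty_K$. Near $\Delta_3$ this is Proposition \ref{int} via the ramified covers. Near $\Delta_2$ I would use cut-offs $\mu_\ep$ as in \eqref{cutoff}. Their differentials are uniformly bounded in $\omega_D$ (Poincaré completeness), and $D_K$ involves $\theta_0$, which has log poles there. The weighted $L^2$ bounds with the measures $d\mu_a$ in Definition \ref{smoothK} absorb the $\log|s_i|$ growth of $\theta_0$, so the boundary terms tend to zero by dominated convergence. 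A further point to check is that $\ker D^c_K$ and harmonicity are compatible with these conic-sense identities, so that $D^c_K z = 0$ really holds in the $L^2$ pairing.
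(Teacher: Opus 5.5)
Your proposal is correct and follows the paper's route: the paper gives no separate argument, treating the corollary as a direct consequence of the strong Hodge decomposition (Theorem \ref{smoothhodge}) combined with the Simpson identities \eqref{kahleridentity} and Proposition \ref{hodgelap}, run through the classical $\ddbar$-lemma proof exactly as you do (the same pattern as Corollary \ref{conicddabr}). Two small repairs: delete the false start ``$-\langle D^c_K D_K b, z\rangle$ hmm'' from your display, since only the corrected $(D^c_K)^\star D_K b$ version is valid; and justify that $a=(D^c_K)^\star Gw$ and $b=D^c_K Gw$ lie in $\cC^\infty_K(X,L)$ by noting that this space is stable under $D'_K$, $D''_K$ and their adjoints (the $\log$-growth of $\theta_0$ along $\Delta_2$ is absorbed by the weights $d\mu_a$), which is slightly more than the regularity proposition literally states.
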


We conclude with the following question.

\begin{quest}
	Can the above Hodge theory be generalized to the higher-rank case?
	This question is already very interesting when $\dim X = 1$.
\end{quest}

\medskip

%%%%%%%%%%%%%%%%%%%%%%%%%%%%%%%%%%%%%%%%%%%%%%%%%%%%%%
%%%%%%%%%%%%%%%%%%%%%%%%%%%%%%%%%%%%%%%%%%%%%%%%%%%%%%%%%
%%%%%%%%%%%%%%%%%%%%%%%%%%%%%%%%%%%%%%%%%%%

\subsection{Resolution of the Hypercohomology}\label{resolutionsection}

Given a logarithmic holomorphic flat line bundle $(L,\nabla) \in M_{DR}(X/D)$, it induces two natural objects.

First, as we have already mentioned at the beginning of the current section, the logarithmic flat connection $\nabla$ induces the complex
\begin{equation}\label{derham1}
	0 \to \mathcal{O}_X(L)
	\to \mathcal{O}_X(L) \otimes \Omega_X^{1}(\log D)
	\to \mathcal{O}_X(L) \otimes \Omega_X^{2}(\log D)
	\to \cdots
	\to \mathcal{O}_X(L) \otimes \Omega_X^{n}(\log D)
	\to 0 .
\end{equation}
Let $\mathbb H^\bullet(X, L \otimes \Omega_X^{\bullet}(\log D))$ denote the hypercohomology of \eqref{derham1}.

Second, the parallel transport of $\nabla$ induces a natural rank-one local system on $X \setminus D$.
Let $M_{\mathrm B}(X \setminus D)$ be the space of rank-one local systems on $X \setminus D$.
The parallel transport induces a natural map
\begin{equation}\label{surjj}
	\rho : M_{DR}(X/D) \longrightarrow M_{\mathrm B}(X \setminus D).
\end{equation}

By \cite[II, 6.10]{Del70}, this map is surjective: given a flat bundle on $X \setminus D$, it can be extended to a holomorphic line bundle on $X$, and the flat connection may acquire simple poles along $D$.
Moreover, the kernel of $\rho$ is isomorphic to $\mathbb Z^m$.
Indeed, given a logarithmic flat bundle $(L,\nabla)$, for any $a_i \in \mathbb Z$ we can construct new logarithmic flat bundles
$L \otimes \prod \mathcal{I}_{D_i}^{a_i}$, and $\nabla$ induces a flat connection $\nabla'$ on
$L \otimes \prod \mathcal{I}_{D_i}^{a_i}$ with logarithmic poles along $D$.

At the level of cohomology, we have the following fundamental result due to Deligne.

\begin{thm}\cite[II, 6.10]{Del70}\label{Deli}
	Let $(L,\nabla) \in M_{DR}(X/D)$.
	If $\Res_{D_i}(L,\nabla) \notin \mathbb N^\star$ for every $D_i$, then for all $p$,
	\[
	\mathbb H^p\bigl(X, L \otimes \Omega_X^{\bullet}(\log D)\bigr)
	\simeq
	H^p\bigl(X \setminus D, \rho(L,\nabla)\bigr).
	\]
\end{thm}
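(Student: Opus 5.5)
The plan is to compare both sides with the direct image $Rj_\ast \rho(L,\nabla)$, where $j: X\setminus D \hookrightarrow X$ is the inclusion. Write $\mathbb{L}:=\rho(L,\nabla)$ for the local system of flat sections of $\nabla$ on $X\setminus D$. Its cohomology satisfies $H^p(X\setminus D, \mathbb{L}) = \mathbb H^p(X, Rj_\ast \mathbb{L})$. On $X\setminus D$, the holomorphic Poincaré lemma for the flat connection $\nabla$ shows that $L\otimes\Omega^\bullet_{X\setminus D}$ is a resolution of $\mathbb{L}$. Hence there is a natural morphism of complexes of sheaves on $X$,
\[
L \otimes \Omega_X^{\bullet}(\log D) \longrightarrow j_\ast\bigl(L\otimes \Omega^\bullet_{X\setminus D}\bigr) \simeq Rj_\ast \mathbb{L},
\]
where the last identification uses that $X\setminus D \hookrightarrow X$ is Stein-locally affine, so $j_\ast$ of coherent sheaves computes $Rj_\ast$. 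The same can be arranged by working with the complex of forms with moderate growth, or via a Dolbeault resolution. It suffices to show that this map is a quasi-isomorphism, which is a local statement at points of $D$.

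Near a point of $D$, take a polydisc $\Delta^n$ with coordinates in which $D=\{z_1\cdots z_r=0\}$. Then $\Delta^n\setminus D \simeq (\Delta^\ast)^r\times \Delta^{n-r}$ is homotopic to a torus $T^r$, and $\mathbb{L}$ is given by commuting monodromies $\lambda_i = \exp(-2\pi i\, a_i)$, where $a_i=\Res_{D_i}(L,\nabla)$. In the rank-one case one can choose a frame $e$ of $L$ on the polydisc such that
\[
\nabla e = \sum_{i\le r} a_i \frac{dz_i}{z_i}\otimes e ,
\]
since the holomorphic part of the connection form is closed and hence exact on the polydisc, so it can be absorbed by a gauge change $e\mapsto e^{g}e$. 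Both complexes then factor, by a Künneth argument, as tensor products of one-variable complexes, together with the trivial de Rham complex in the remaining variables, which is acyclic in positive degrees by the holomorphic Poincaré lemma. This reduces the claim to the one-variable statement on $\Delta$ with $\nabla = d + a\,dz/z$.

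The one-variable case is the main step, and it is where the hypothesis $\Res_{D_i}(L,\nabla)\notin \mathbb N^\star$ enters. Expand a germ $f=\sum_{k\ge0} f_k z^k$. The differential maps it to
\[
\nabla f=\sum_k (k+a) f_k z^k\, \frac{dz}{z}.
\]
The complex $\O\to\Omega^1(\log 0)$ therefore has kernel and cokernel concentrated in the single monomial with $k=-a$, and these are nonzero only when $-a\in\mathbb N$; convergence of the term-by-term inverse is clear since $|k+a|$ grows linearly. On the other side, the local cohomology of $Rj_\ast\mathbb{L}$ at $0$ is $H^\bullet(S^1,\lambda)$, which is $\mathbb C$ in degrees $0$ and $1$ if $\lambda=1$ and $0$ otherwise. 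When $a\notin\mathbb Z$, both sides vanish. When $a\in\mathbb Z_{\le 0}$, we have $\lambda=1$ and $k=-a\ge0$ contributes exactly one class in each degree, represented by $z^{-a}e$ and $z^{-a}e\,dz/z$; these map to the generators of $H^0(S^1)$ and $H^1(S^1)$. When $a\in\mathbb N^\star$, the log complex is acyclic while $H^\bullet(S^1,1)\neq0$, and the statement fails; this is exactly the excluded case.

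The main obstacle I expect is checking carefully that the comparison map induces the isomorphism on these local cohomology classes in the integral case $a\in\mathbb Z_{\le0}$, including at the crossing points, rather than just that the dimensions agree. I would handle it by computing the image of $z^{-a}e$ and $z^{-a}e\,dz/z$ directly: the first is a flat section, and the second integrates to a nonzero multiple of $2\pi i$ around the loop. Compatibility with the Künneth decomposition then gives the general SNC case, and taking hypercohomology on $X$ yields the theorem.
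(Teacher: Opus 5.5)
The paper gives no proof of this statement; it is quoted from Deligne. Your argument therefore has to stand on its own, and it does: it is a correct, self-contained proof in the rank-one case.

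\textbf{Comparison with the general theorem.} Deligne's theorem holds in arbitrary rank, and the usual general route passes through the meromorphic complex $L\otimes\Omega^\bullet_X(*D)$. Along the pole-order filtration, the graded pieces are governed by $\Res_{D_i}(L,\nabla)-k$ with $k\ge 1$. This is invertible exactly under the hypothesis $\Res_{D_i}(L,\nabla)\notin\mathbb N^\star$. The map $L\otimes\Omega^\bullet_X(*D)\to Rj_\ast\mathbb L$ is then a quasi-isomorphism by the comparison theorem for regular singular connections.

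You avoid both the meromorphic complex and the regularity theorem. You work with $j_\ast(L\otimes\Omega^\bullet_{X\setminus D})$, allowing essential singularities; this computes $Rj_\ast\mathbb L$ because a small polydisc minus $D$ is Stein. You then compute the topological stalks directly on $T^r$. The price is that rank one is used essentially: you need the gauge to $d+\sum a_i\,dz_i/z_i$ with constant $a_i$, and K\"unneth for a character of $\mathbb Z^r$.

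\textbf{Two steps deserve a sentence more than you gave them.}

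First, the constancy of the residues along each $D_i$ follows from $d\omega=0$ (or from the paper's definition of the residue). You need it so that $\omega-\sum a_i\,dz_i/z_i$ is holomorphic at crossing points.

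Second, $\mathbb C\{z_1,\dots,z_n\}$ is not a tensor product of one-variable rings, so the holomorphic K\"unneth step should be phrased differently. The stalk of the log complex is the Koszul complex of the commuting operators $z_i\partial_i+a_i$ for $i\le r$ and $\partial_i$ for $i>r$, acting on $\mathbb C\{z\}$. Split it by $z_i$-degree. Away from degree $k=-a_i$, the inverse $f_k\mapsto f_k/(k+a_i)$ is bounded uniformly in $k$, so it preserves convergence on a fixed polydisc. Each $\partial_i$ is surjective, with kernel the functions independent of $z_i$. Induction on $n$ then gives the exterior algebra on the classes $z^{-a}e\,dz_I/z_I$ when all $a_i\in\mathbb Z_{\le 0}$, and zero otherwise. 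Your generator check identifies this with $H^\bullet(T^r,\mathbb L)$.
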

\medskip

In \cite{CDHP}, we constructed a natural resolution of the complex \eqref{derham1} using the space of smooth forms $\cC^\infty_K(X,L)$ defined in Definition \ref{smoothK}.
Combined with the Hodge decomposition theorem \ref{smoothhodge}, this allows us to compute
$H^\bullet(X \setminus D, \rho(L,\nabla))$ using $\cC^\infty_K(X,L)$.
We briefly explain these constructions below.

\medskip

Let $\omega_D$ be the Poincaré–conic metric defined in \eqref{m1}, and let
$\cC^\infty_K(X,L)$ be the space defined in Definition \ref{smoothK} with respect to $(h_L,\omega_D)$.

\medskip

Note that $h_L$ and $\omega_D$ are smooth near a generic point of $\Delta_1$.
Thus, for $u \in \cC^\infty_K(X,L)$, the form $u$ is smooth in the classical sense near a generic point of $\Delta_1$.
To study the sheaf $\mathcal O_X(L) \otimes \Omega_X^\bullet(\log D)$, we introduce the following logarithmic conic smooth space, as an analogue of Definition \ref{logconic}.

\begin{defn}\label{logKsmooth}
	Let $\cC^\infty_K(X,\Delta_1,L)$ denote the space of forms locally of the form
	\[
	\sum_{I \subset J} \frac{dz_I}{z_I} \wedge f_I,
	\]
	where $f_I \in \cC^\infty_K(X,L)$ and $\prod_{i \in J} z_i = 0$ locally defines $\Delta_1$.
	
	We denote by $\cC^\infty_K(\Delta_1,L)$ the sheaf generated by the local sections of
	$\cC^\infty_K(X,\Delta_1,L)$.
\end{defn}

We then have the following local solvability result.

\begin{thm}\label{localsolution}\cite[Cor 9.9]{CDHP}
	Let $h_L$ be the harmonic in Lemma \ref{choicemetric}.
	Let $U \subset X$ be a small Stein open set, and let
	$f \in \cC^\infty_{s,K}(U,\Delta_1,L)$ satisfy $D_K f = 0$ and $f \in L^2(U)$.
	\begin{enumerate}
		\item If $s > n$, there exists $g \in \cC^\infty_{s-1,K}(U,\Delta_1,L)$ such that
		$g \in  L^2(U)$ and $D_K g = f$ on $U \setminus D$.
		
		\item If $s \leq n$, there exists $g \in \cC^\infty_{s-1,K}(U,\Delta_1,L)$ such that
		$g \in L^2(U)$ and
		\[
		f - D_K g \in H^0\bigl(U, \Omega_X^s(\log D) \otimes L\bigr),
		\qquad
		\nabla(f - D_K g) = 0.
		\]
	\end{enumerate}
\end{thm}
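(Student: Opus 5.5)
Since $U$ is Stein and small, we may assume $U$ is a polydisc centred at a point of $D$ with coordinates in which $\Delta_1,\Delta_2,\Delta_3$ are coordinate hyperplanes. We may also trivialise $L$ by a frame $e_L$ with $\nabla e_L = \sum_i a_i \frac{dz_i}{z_i}\otimes e_L$, the $a_i$ being the residues; in rank one with flat log connection this normal form holds after shrinking $U$. Under this trivialisation $D_K$ is conjugate, via the weight $e^{-\varphi}$ of $h_L$ (which by Lemma~\ref{choicemetric} is locally $\prod|z_i|^{2\nu_i}$ up to a smooth pluriharmonic factor), to the twisted de Rham operator $d + \sum_i a_i\frac{dz_i}{z_i}\wedge$. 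The plan is a Poincaré lemma for this twisted operator, done one variable at a time by a Koszul/homotopy argument, so that the variables split as a product situation.

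\textbf{Step 1: smooth directions and $\Delta_1$.} In the directions transverse to $D$, and along $\Delta_1$ where the residue is $0$ and $h_L,\omega_D$ are smooth, the operator is the untwisted $d$. On the logarithmic complex $\frac{dz_I}{z_I}\wedge f_I$ we use the usual homotopy operator of the log de Rham complex: integration in the radial variable for the smooth part, and extraction of the coefficient of $\frac{dz_i}{z_i}$ for the log part. This kills cohomology except for holomorphic log forms with $\nabla$-flat, constant-residue parts. These produce the term in $H^0(U,\Omega^s_X(\log D)\otimes L)$ in case (2). In top degree $s>n$ nothing survives, giving case (1).

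\textbf{Step 2: $\Delta_3$ directions.} In the direction $z_i$ with $i$ in $\Delta_3$, the residue is non-integral, $a_i\in\,]0,1[$ modulo integers after normalisation. We pass to the $k$-ramified cover $w_i^k=z_i$, where the conic forms become smooth after dividing by $w^{am}$. The twisted operator $d+a_i\frac{dz_i}{z_i}$ becomes, after multiplication by $w_i^{ka_i}$, the untwisted $d$ acting on forms of a fixed character under the deck group $\mathbb Z/k$. We apply the standard Poincaré lemma on the cover, average over the group to stay in the correct isotypic component, and descend. The non-integrality of $a_i$ means the character is nontrivial, so there are no invariant constants and hence no cohomology contribution, consistent with Theorem~\ref{Deli}.

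\textbf{Step 3: $\Delta_2$ directions, the main obstacle.} In the direction $z_i$ with $i$ in $\Delta_2$, $\omega_D$ is Poincaré-type and the Higgs field $\theta_0$ has nonzero residue $\lambda_i=\Res+\nu\neq 0$. I would use the key inequality Theorem~\ref{keyinequ} locally, with cut-offs. For forms supported near $\Delta_2$ it gives $\|u\|^2\lesssim\langle\Delta_K u,u\rangle$, because $|\theta_0|^2\gtrsim 1$ in the Poincaré metric there. This is a spectral gap, so one can solve $\Delta_K w=f$ (up to a compactly supported error) in the weighted spaces $d\mu_a$ and set $g=D_K^\star w$. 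Elliptic regularity as in the proposition before Theorem~\ref{smoothhodge} then gives $g\in\cC^\infty_K$ with all weighted derivatives bounded.

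Concretely, I would glue the pieces. Write $f=\chi f+(1-\chi)f$ with $\chi$ supported near $\Delta_2$. Solve the $\Delta_2$ part by the $L^2$ method; $D_K$-closedness passes through since $D_K$ commutes with $\Delta_K$, and the harmonic part vanishes by the gap. Then apply Steps 1–2 to the remainder, which is now $D_K$-closed and supported away from $\Delta_2$ modulo corrections.

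The delicate point is compatibility: the $L^2$ solution near $\Delta_2$ must retain log poles along $\Delta_1$ and conic smoothness along $\Delta_3$ at the intersections. I expect to handle this by doing the $\Delta_2$ step on the product structure $U=U'\times\DD^\star$, working fibrewise with Fourier expansion in $\arg z_i$, so that regularity in the other variables is preserved by differentiation under the integral.
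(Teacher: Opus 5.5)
\paragraph{Comparison with the paper.}
Your route differs from the paper's. You propose a product (Koszul) homotopy for the twisted de Rham operator; the paper uses an $L^2$ Dolbeault lemma. Steps~1--2 are essentially sound:
\begin{itemize}
\item In a flat log frame, $D_K$ \emph{is} $d+\sum_i \Res_{D_i}(L,\nabla)\,\frac{dz_i}{z_i}\wedge$. No conjugation by the weight is involved; $h_L$ enters only through the function spaces.
\item The log homotopy handles $\Delta_1$.
\item Along $\Delta_3$, the factor $w_i^{k\Res}$ on the $k$-ramified cover is exactly the normalisation of Definition~\ref{forms}, so the radial homotopy on the cover works.
\end{itemize}
You still have to check that each one-variable homotopy preserves the $\cC^\infty_K$-conditions in the remaining variables.

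\paragraph{The gap: Step~3.}
This step carries the whole analytic content of the statement, and the cut-off gluing does not work.
\begin{itemize}
\item The form $\chi f$ is not $D_K$-closed: $D_K(\chi f)=d\chi\wedge f$. If you solve $\Delta_K w=\chi f$ and set $g_1=D_K^\star w$, then, even granting that there is no harmonic part, $D_Kg_1=\chi f-D_K^\star D_K w$. The remainder $f-D_Kg_1=(1-\chi)f+D_K^\star D_K w$ is $D_K$-closed but \emph{not} supported away from $\Delta_2$. The term $D_K^\star D_K w$ is non-local and reaches $\Delta_2$, so you are back to the original problem.
\item The estimate \eqref{mainineq} holds on the compact $X$, for forms compactly supported in $X\setminus\Delta_2$. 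On $U$, the metric $\omega_D$ is incomplete at $\partial U$, so $\Delta_K$ needs boundary conditions.
\item The bound $|\theta_0|^2_{\omega_D}\gtrsim\log^2|z_i|$ gives a gap only for forms supported in a thin neighbourhood of $\Delta_2$. Dirichlet conditions there break the commutation of $D_K$ with $\Delta_K^{-1}$ that you invoke.
\item An $L^2$ inverse of $\Delta_K$ does not respect the product structure on which your Koszul gluing in the other variables relies.
\end{itemize}

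\paragraph{What is missing.}
You need a genuinely local solution operator in the $\Delta_2$-direction that is bounded on the weighted spaces defining $\cC^\infty_K$. These spaces involve all covariant derivatives for the Poincaré metric and all weights $\log^{2a}|s_{D_i}|^2$.

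The paper obtains this from Fujiki's local $L^2$ Dolbeault-type estimate for Poincaré-type metrics. That estimate applies to the $\dbar$-part of $D_K=\dbar+\nabla$. A zig-zag through the double complex then yields the shape of (1) and (2):
\begin{itemize}
\item for $s>n$, every component has $q\ge1$;
\item for $s\le n$, only a $\dbar$-closed, hence holomorphic and $\nabla$-flat, $(s,0)$-piece survives.
\end{itemize}
For $\dbar$ on a Stein $U$ the boundary can be handled by psh weights, which has no analogue for $D_K$. The paper then uses \eqref{mainineq} and elliptic regularity to upgrade the $L^2$ solution to $\cC^\infty_K$.

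\paragraph{How to repair your scheme.}
If you want to keep the product homotopy, the fibrewise Fourier expansion in $\arg z_i$ that you mention at the end must \emph{replace} the $L^2$/cut-off argument, and it must be carried out.
\begin{itemize}
\item On modes with $n+\Res_{D_i}(L,\nabla)\neq 0$, invert $\partial_{\arg z_i}$; this preserves the radial weights.
\item The zero mode occurs only when the residue is an integer. For it you must prove a weighted Hardy inequality for the radial primitive, with power weight $|z_i|^{-2(\nu_{D_i}(h_L)+\Rel\Res_{D_i}(L,\nabla))}$.
\item You must then check that the surviving local classes are the flat holomorphic log forms. These lie in $\cC^\infty_K$ precisely because $\nu_{D_i}(h_L)<0$ along $\Delta_2$ (Lemma~\ref{choicemetric}).
\end{itemize}
Your proposal never uses these $\Delta_2$ hypotheses except through $|\theta_0|$.
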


\begin{proof}
	The proof relies on an estimate due to A.~Fujiki~\cite{Fuj92} together with the elliptic estimate \eqref{mainineq}.
	We refer to \cite[Cor 9.9]{CDHP} for details.
\end{proof}

By the choice of the harmonic metric in Lemma \ref{choicemetric}, we have
$\nu(h_L) < 0$ on $\Delta_2 + \Delta_3$ and $\nu(h_L) = 0$ on $\Delta_1$.
Therefore, there is a natural embedding of sheaves
\begin{equation}\label{embed}
i: \Omega_X^\bullet(\log D) \otimes L
\subset
\cC^\infty_{\bullet,K}(\Delta_1,L).
\end{equation}

As a consequence of Theorem \ref{localsolution}, we obtain the following result.

\begin{thm}\label{quasiiso}
		The natural inclusion morphism $i$ \eqref{embed}  between the following complexes of sheaves is a quasi-isomorphism:
	\[
i:	(\Omega_X^\bullet(\log D) \otimes L, \nabla)
	\longrightarrow
 (	\cC^\infty_{\bullet,K}(\Delta_1,L), D_K)
	\]
	In particular, we have an isomorphism
\[ 
\frac{\ker D_K : \cC^\infty_{\bullet,K}(X, \Delta_1, L) \to \cC^\infty_{\bullet+1,K}(X, \Delta_1, L)}
{\im D_K : \cC^\infty_{\bullet-1,K}(X, \Delta_1, L) \to \cC^\infty_{\bullet,K}(X, \Delta_1, L)}
\simeq \mathbb{H}^\bullet (X, L \otimes \Omega_X^\bullet(\log D)).
\]
\end{thm}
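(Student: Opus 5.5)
The argument has two parts. First we show that $i$ is a quasi-isomorphism of complexes of sheaves, which is a stalkwise statement. Then we pass to global sections using a fineness property of the sheaves $\cC^\infty_{\bullet,K}(\Delta_1,L)$. Before either step we check that $i$ is a morphism of complexes. If $s$ is a local section of $\Omega_X^p(\log D)\otimes L$, then $D''_K s=\dbar s+(\theta_0-\ol\theta_0)\wedge s$ and $D'_K s=D'_{h_L}s+(\theta_0+\ol\theta_0)\wedge s$. Since $D''_K+D'_K=\dbar+\nabla$ and $\dbar s=0$ away from $D$, we get $D_K(i(s))=i(\nabla s)$ on $X\setminus D$. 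By the convention of the conic setting, this equality in the complement is exactly what is needed. That $i$ lands in $\cC^\infty_{\bullet,K}(\Delta_1,L)$ is the statement \eqref{embed}. It uses Lemma \ref{choicemetric}: we have $\nu(h_L)<0$ on $\Delta_2+\Delta_3$, which makes the log poles along $\Delta_2+\Delta_3$ conic-smooth, and the weighted integrability near $\Delta_2$ holds because $|dz/z|_{\omega_D}$ is bounded for the Poincaré metric. Along $\Delta_1$ we have $\nu=0$, so the log poles there are allowed by Definition \ref{logKsmooth}.

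\textbf{Stalkwise quasi-isomorphism.} Fix $x\in X$ and a small Stein neighbourhood $U$, say a polydisc adapted to $D$. We must show that $\mathcal H^s(\cC^\infty_{\bullet,K}(\Delta_1,L))_x\simeq \mathcal H^s(\Omega^\bullet_X(\log D)\otimes L,\nabla)_x$ via $i$.

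For surjectivity, take a germ $f$ with $D_Kf=0$. After shrinking $U$ we may assume $f\in L^2(U)$, since sections in $\cC^\infty_K$ are locally $L^2$ by definition. Theorem \ref{localsolution} then gives $g$ with $D_Kg=f$ when $s>n$. When $s\le n$ it gives $f-D_Kg$ holomorphic and $\nabla$-closed, so the class of $f$ comes from $i$.

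For injectivity, let $\alpha$ be holomorphic, $\nabla$-closed, with $i(\alpha)=D_K g$ for some $g\in\cC^\infty_{s-1,K}$. Since $D_Kg$ is then holomorphic, we may assume $g$ is $D_K$-closed in degree $s-1$ up to this; more precisely, we apply Theorem \ref{localsolution} inductively to $g$. We want to correct $g$ into a holomorphic primitive. The cleanest route is to compare with the known local structure of the holomorphic log de Rham complex. Theorem \ref{localsolution} applied in all degrees shows that the cohomology of the $\cC^\infty_K$-complex on $U$ in degree $s$ is a quotient of $\ker\nabla/\nabla(\cdot)$ on holomorphic forms. I would then run the standard double-complex/zig-zag argument: write $g$ and use Theorem \ref{localsolution} in degree $s-1$ for the closed part, so as to replace $g$ by a holomorphic form plus a $D_K$-exact term, whence $\alpha\in\nabla(\Omega^{s-1}(\log D)\otimes L)$.

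I expect this injectivity step to be the main obstacle. One has to check that the holomorphic primitive can genuinely be produced, i.e. that the $\cC^\infty_K$-complex does not kill holomorphic classes. For this I would use the explicit local computation of holomorphic log cohomology on a polydisc. It is generated by the monomials $dz_I/z_I$ on components with zero residue ($\Delta_1$, and the trivial-monodromy part), and it vanishes when some residue along a component through $x$ is non-integral ($\Delta_3$). Along $\Delta_2$ it is likewise killed by the non-vanishing residue of $\theta_0$ in \eqref{mainineq}. One then verifies directly that the surviving classes $dz_I/z_I$, $I\subset J$, are not $D_K$-exact within $\cC^\infty_K$, by integrating over small tori around $\Delta_1$: $g$ is conic smooth with only log poles along $\Delta_1$, so the torus integral of $D_Kg$ vanishes in the limit.

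\textbf{Global isomorphism.} The sheaves $\cC^\infty_{s,K}(\Delta_1,L)$ are modules over smooth functions on $X$. Here one checks that multiplying by a smooth partition of unity preserves the weighted bounds in Definition \ref{smoothK}, since smooth functions have bounded $\omega_D$-covariant derivatives, Poincaré metric included. Hence these sheaves are fine and therefore acyclic. The hypercohomology of $\Omega_X^\bullet(\log D)\otimes L$ is consequently computed by the global sections of the resolution, which are exactly $\cC^\infty_{\bullet,K}(X,\Delta_1,L)$ with differential $D_K$. This gives the displayed isomorphism.
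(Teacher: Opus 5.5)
Your architecture is the paper's. The paper obtains the statement from the local solvability Theorem \ref{localsolution} stalk by stalk, and the global isomorphism then follows because the sheaves $\cC^\infty_{\bullet,K}(\Delta_1,L)$ are $C^\infty_X$-modules, hence fine. Surjectivity on stalk cohomology is exactly Theorem \ref{localsolution}, and your fineness step is correct.

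\textbf{The gap is injectivity.} Theorem \ref{localsolution} only treats $D_K$-closed forms, so it cannot be ``applied inductively to $g$'' when $D_Kg=i(\alpha)\neq 0$. Surjectivity on cohomology never implies injectivity by itself.

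Your fallback rests on a false claim. The stalk cohomology of $(\Omega^\bullet_X(\log D)\otimes L,\nabla)$ is determined by the residues of $\nabla$ alone: after a holomorphic change of frame, $\nabla e=\sum a_i\frac{dz_i}{z_i}e$. It does not see $h_L$, $\omega_D$ or $\theta_0$, so it is not ``killed along $\Delta_2$ by the residue of $\theta_0$''. Lemma \ref{choicemetric} only imposes $\nu_Y(h_L)\neq-\Res_Y(L,\nabla)$ and $\nu_Y(h_L)<0$ on $\Delta_2$, so for instance $\Res_Y(L,\nabla)=0$ is allowed. At a point of such a $Y$, with flat local frame $e$, the class of $\frac{dz_Y}{z_Y}\otimes e$ survives. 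More generally, residues in $-\mathbb N$ produce classes $z^k\frac{dz_I}{z_I}\otimes e$. Your torus argument is run only around $\Delta_1$, so it says nothing about these classes, and injectivity is not established at points of $\Delta_2$.

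\textbf{A clean repair avoids case analysis.} On $U\setminus D$, the operator $D_K=\dbar+\nabla$ is just the flat connection acting on smooth forms, and every element of $\cC^\infty_{\bullet,K}(U,\Delta_1,L)$ is smooth there. Hence $i(\alpha)=D_Kg$ implies $[\alpha]=0$ in $H^s(U\setminus D,\mathbb L)$, where $\mathbb L$ is the local system of flat sections. For a polydisc $U$ adapted to $D$ at $x$, the natural map from the stalk $\mathcal H^s(\Omega^\bullet_X(\log D)\otimes L,\nabla)_x$ to $H^s(U\setminus D,\mathbb L)$ is injective. This is Deligne's local Koszul computation: the stalk vanishes unless the residues of all components through $x$ lie in $-\mathbb N$. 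In that case it is the exterior algebra spanned by the $z^k\frac{dz_I}{z_I}\otimes e$, which maps isomorphically onto $H^\bullet((\Delta^*)^r\times\Delta^{n-r},\mathbb C)$. So $\alpha$ is $\nabla$-exact near $x$.

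Your torus integrals are exactly this pairing, but they must be taken around every component through $x$ with trivial local monodromy, $\Delta_2$ included. Equivalently, a bidegree-wise $\dbar$-Poincar\'e lemma in the $\cC^\infty_K$ class plus the usual zig-zag gives surjectivity and injectivity at once.

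\textbf{Two smaller corrections.}
First, $|dz/z|_{\omega_D}\sim\bigl|\log|z|^2\bigr|$ is unbounded for the Poincar\'e metric. The embedding \eqref{embed} holds because $\nu(h_L)<0$ makes $|e|_{h_L}$ decay like a positive power of $|z|$, which beats every power of the logarithm.
Second, a germ with a genuine log pole along $\Delta_1$ is not literally $L^2$ near $\Delta_1$, where $\omega_D$ and $h_L$ are smooth. So ``locally $L^2$ by definition'' does not verify the hypothesis of Theorem \ref{localsolution} there; you must read that hypothesis in a sense compatible with log poles along $\Delta_1$.
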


In the special case $\Delta_1 = 0$, we obtain the following refinement.

\begin{thm}\label{L2cohoisom}
If $\Delta_1 = 0$, then the following three spaces are canonically isomorphic:
\[
\ker \Delta_K \simeq 
\frac{\ker D_K: \cC^\infty_{\bullet,K}(X, \Delta_1, L) \to \cC^\infty_{\bullet+1,K}(X, \Delta_1, L)}
{\im D_K: \cC^\infty_{\bullet-1,K}(X, \Delta_1, L) \to \cC^\infty_{\bullet,K}(X, \Delta_1, L)}
\simeq \mathbb{H}^\bullet (X, L \otimes \Omega_X^\bullet(\log D)).
\]
\end{thm}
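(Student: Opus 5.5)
The second isomorphism is exactly Theorem \ref{quasiiso} with $\Delta_1=0$. In that case $\cC^\infty_{\bullet,K}(X,\Delta_1,L)=\cC^\infty_K(X,L)$, so the only new content is the first isomorphism, between $\ker\Delta_K$ and the $D_K$-cohomology of $\cC^\infty_K(X,L)$. I would define the map $\Phi:\ker\Delta_K\to H^\bullet(\cC^\infty_K(X,L),D_K)$ by $u\mapsto[u]$ and show it is well defined, injective and surjective, using Theorem \ref{smoothhodge}.

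\emph{Well-definedness.} First, a harmonic $u$ lies in $\cC^\infty_K(X,L)$; this is the elliptic regularity proposition applied with $g=0$, or equivalently \eqref{devrgood}. Next I need $D_Ku=0$. Since $\omega_D$ is complete along $\Delta_2$ and conic along $\Delta_3$, one has
\[
0=\langle\Delta_Ku,u\rangle=\|D_Ku\|^2+\|D_K^\star u\|^2 .
\]
The integration by parts behind this is justified in two ways. Near $\Delta_3$ one uses the cut-off argument of Proposition \ref{int}. Near $\Delta_2$ one uses the standard completeness cut-off $\mu_\ep$ from \eqref{cutoff}, whose gradient is bounded in $\omega_D$-norm, together with the fact that $u,D_Ku,D_K^\star u\in L^2$. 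Hence $u$ is $D_K$-closed and $[u]$ makes sense.

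\emph{Injectivity.} Suppose $u=D_Kv$ with $v\in\cC^\infty_K$ and $u$ harmonic. Then
\[
\|u\|^2=\langle D_Kv,u\rangle=\langle v,D_K^\star u\rangle=0 .
\]
This integration by parts is again legitimate because $v$ and $u$ decay like Schwartz-type forms with respect to $d\mu_a$ (Definition \ref{smoothK}), so the boundary terms from $\mu_\ep$ vanish in the limit.

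\emph{Surjectivity.} Take $f\in\cC^\infty_K$ with $D_Kf=0$. By Theorem \ref{smoothhodge} we can write
\[
f=h+D_Ka+D_K^\star b ,\qquad h\in\ker\Delta_K,\quad a,b\in\cC^\infty_K .
\]
Applying $D_K$ gives $D_KD_K^\star b=0$. Pairing with $b$ then gives $\|D_K^\star b\|^2=0$, by the same justified integration by parts. So $f=h+D_Ka$, and $[f]=\Phi(h)$. All the resulting maps are canonical, which is what the statement asks for.

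\emph{Main obstacle.} The hardest step is justifying the integrations by parts near $\Delta_2$ inside the class $\cC^\infty_K$. These are products of forms whose derivatives lie in the weighted $L^2$ spaces of Definition \ref{smoothK}, so one must check that the terms $\int \partial\mu_\ep\wedge\cdots$ tend to zero. I would bound these terms by Cauchy--Schwarz, using $|\partial\mu_\ep|_{\omega_D}\le C$ supported where $\mu_\ep\neq1$, and conclude by dominated convergence, since the integrands are in $L^1$.
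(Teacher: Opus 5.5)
Your proposal is correct and follows the same route as the paper. The paper obtains the statement directly from Theorem \ref{smoothhodge}, which identifies $\ker\Delta_K$ with the $D_K$-cohomology of $\cC^\infty_K(X,L)$, and from Theorem \ref{quasiiso}, which identifies that cohomology with the hypercohomology when $\Delta_1=0$. You have simply written out the harmonic-representative argument and the cut-off justifications for the integrations by parts, which the paper leaves implicit.
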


\begin{proof}
	This follows directly from Theorem \ref{smoothhodge} and Theorem \ref{quasiiso}.
\end{proof}

%%%%%%%%%%%%%%%
%%%%%%%%%%%%%%%%%%%%%%%%%%%%
%%%%%%%%%%%%%%%%%%%%%%%%%%%%%%%%%%%%%%%%%%%%%%%%%
%%%%%%%%%%%%%%%%%%%%%%%%%%%%%%%%%%%%%%%%%%%%%%%

\section{Hodge decomposition for currents and a regularity lemma}\label{currentsect}

In the previous two sections, we established a Hodge theory for the conic smooth forms $\mathcal{C}^\infty(X,L)$ and $\mathcal{C}^\infty_K(X,L)$. In particular, we obtained the $\ddbar$-lemma (Theorems~\ref{conicddabr} and \ref{ddbarK}) for forms in $\mathcal{C}^\infty(X,L)$ and $\mathcal{C}^\infty_K(X,L)$. However, as we already observed in Theorem~\ref{quasiiso}, in order to study the hypercohomology
\[
\mathbb{H}^\bullet\bigl(X, L \otimes \Omega_X^\bullet(\log D)\bigr),
\]
we need to consider a larger space, namely $\mathcal{C}^\infty_K(X,\Delta_1,L)$.

Thanks to a nice idea due to Noguchi~\cite{Nog95}, one can study $\mathcal{C}^\infty_K(X,\Delta_1,L)$ by embedding it into the dual of $\mathcal{C}^\infty_K(X,L)$, i.e.\ into the space of currents. There are two key ingredients in this approach. The first is a Hodge decomposition for currents, namely the so-called de~Rham--Kodaira decomposition. The second is a regularity lemma, which says that if a logarithmic form is $\dbar$-exact modulo an order-zero current supported on $\Delta_1$, then one can find a logarithmic solution of the $\dbar$-equation. 
In this section we recall part of these ideas, and explain how they are implemented in our general setting.
\bigskip

\subsection{De~Rham--Kodaira decomposition and the $\ddbar$-lemma for currents}

We first briefly recall the classical de~Rham--Kodaira decomposition theorem~\cite{dRKo}.
Let $X$ be a compact K\"ahler manifold endowed with a K\"ahler metric $\omega_X$.
Let $T$ be a $(p,q)$-current on $X$, and let $(\xi_i)_i$ be an orthonormal basis of harmonic forms of type $(p,q)$.
We define the \emph{harmonic projection} of $T$ by
\begin{equation}\label{projH}
	\cH(T):=\sum_i \langle T,\xi_i\rangle\, \xi_i,
\end{equation}
where the pairing $\langle T,\xi_i\rangle$ is given by
\begin{equation}\label{pave72}
	\langle T,\xi\rangle := \int_X T\wedge \sharp \xi.
\end{equation}
Here $\sharp$ denotes the Hodge star operator. In particular, $\xi$ is harmonic if and only if $\sharp \xi$ is harmonic \cite[Chapter VI, (3.18 )]{bookJP}

\bigskip

We define, by duality, the Green operator on currents by
\begin{equation}\label{pave17}
	\int_X \cG(T)\wedge \phi := \int_X T\wedge G(\phi),
\end{equation}
for any smooth form $\phi$, where $G$ is the (smooth) Green operator. Then we have the de~Rham--Kodaira decomposition.

\begin{thm}[de~Rham--Kodaira]\label{decompcurrent}
	We have the decomposition
	\begin{equation}\label{pave18}
		T = \cH(T) + \Delta''\bigl(\cG(T)\bigr).
	\end{equation}
	If moreover $T$ is $\dbar$-closed, then
	\begin{equation}\label{pave19}
		T = \cH(T) + \dbar T_1,
	\end{equation}
	where $T_1 := \dbar^\star\bigl(\cG(T)\bigr)$.
\end{thm}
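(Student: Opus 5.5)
The plan is to prove the de~Rham--Kodaira decomposition by duality from the smooth Hodge theory on $(X,\omega_X)$. For a smooth form $\phi$ we have the classical decomposition $\phi = H(\phi) + \Delta'' G(\phi)$. Here $H$ is the orthogonal projection onto harmonic forms and $G$ is the Green operator. Both are self-adjoint and commute with $\dbar$, $\dbar^\star$ and $\Delta''$.

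The first step is to fix the conventions for the pairing. We identify a current $T$ of type $(p,q)$ with the functional $\phi\mapsto \int_X T\wedge \phi$ on smooth $(n-p,n-q)$-forms. Differential operators act on currents by transposition: for instance $\int_X \Delta''(S)\wedge \phi := \int_X S\wedge {}^t\Delta''\phi$. Via the Hodge star $\sharp$, the transpose ${}^t\Delta''$ is conjugate to the Laplacian $\Delta''$ on the complementary bidegree (a conjugate-linear identification, depending on conventions). Consequently ${}^tG$ and the transposed harmonic projection are again the Green operator and the harmonic projection on complementary forms. Under these conventions, definitions \eqref{projH} and \eqref{pave17} say precisely that $\cH$ and $\cG$ are the transposes of $H$ and $G$. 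One has to check that $\langle T,\xi_i\rangle\xi_i$ summed over the orthonormal basis matches the transpose of $H$. This follows because $\sharp$ maps harmonic forms to harmonic forms, as already recalled. This bookkeeping is the step I expect to be the only real obstacle: one must keep track of signs and conjugations so that the transposes of $H$ and $G$ are exactly $\cH$ and $\cG$.

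Granting this, \eqref{pave18} follows by pairing $T$ with an arbitrary test form $\phi$. We compute
\[
\int_X \bigl(\cH(T)+\Delta''\cG(T)\bigr)\wedge\phi=\int_X T\wedge\bigl(H\phi+ G\Delta''\phi\bigr)=\int_X T\wedge \phi .
\]
The last equality uses that $G$ commutes with $\Delta''$, so that $G\Delta''\phi=\Delta'' G\phi$, together with the smooth Hodge decomposition. Since this holds for every test form, $T=\cH(T)+\Delta''\cG(T)$ as currents.

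For \eqref{pave19}, write $\Delta''=\dbar\dbar^\star+\dbar^\star\dbar$. Transposing the relation $G\dbar=\dbar G$ gives $\dbar\cG(T)=\cG(\dbar T)$. So if $\dbar T=0$, then $\dbar\cG(T)=0$ and $\Delta''\cG(T)=\dbar\dbar^\star\cG(T)$. This yields $T=\cH(T)+\dbar T_1$ with $T_1=\dbar^\star\cG(T)$.
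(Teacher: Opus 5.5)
Your proposal is correct, and it follows the paper's route for \eqref{pave18} but a different one for \eqref{pave19}.

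For \eqref{pave18} you do what the paper does. The paper says this identity follows directly from the smooth Hodge decomposition and the definitions of $\cH$ and $\cG$. You have written out the duality bookkeeping it leaves implicit: the transpose of $\Delta''_L$ is $\Delta''_{L^\star}$ on the complementary bidegree, and the transpose of $H$ is again a harmonic projection.

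For \eqref{pave19} you transpose the smooth identity $G\dbar=\dbar G$, taken on complementary-degree $L^\star$-valued forms. This gives $\dbar\cG(T)=\cG(\dbar T)=0$, and hence $\Delta''\cG(T)=\dbar\dbar^\star\cG(T)$ at once.

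The paper proceeds differently:
\begin{itemize}
\item it applies $\dbar$ to \eqref{pave18} and uses $\dbar\cH(T)=0$ to get $\Delta''\dbar\cG(T)=0$;
\item it invokes elliptic regularity to conclude that $\dbar\cG(T)$ is a smooth harmonic form;
\item it then integrates by parts to get $\dbar^\star\dbar\cG(T)=0$.
\end{itemize}

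What each route buys:
\begin{itemize}
\item Your argument is purely formal. It needs no hypoellipticity for currents, and it gives the sharper conclusion $\dbar\cG(T)=0$ rather than only $\dbar^\star\dbar\cG(T)=0$.
\item Its only input is that the smooth Green operator commutes with $\dbar$. That follows from the smooth strong Hodge decomposition together with integration by parts.
\item For this reason it transfers more readily to the conic and $\mathcal{D}_K$ versions, Theorems \ref{decompcurrentconic} and \ref{decompcurrentconicK}. There, commutation of the Green operator with $\dbar$ (resp.\ $D_K$) on the smooth spaces is a formal consequence of Theorems \ref{CoHo1} and \ref{smoothhodge}. Regularity of harmonic currents near the singular locus would instead have to be re-established.
\item The paper's route, in turn, needs only the decomposition \eqref{pave18} itself plus standard elliptic regularity on a compact manifold.
\end{itemize}
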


\begin{proof}
	The first statement follows directly from the smooth Hodge decomposition and the definitions of $\cH$ and $\cG$.
	
	For the second statement, note that $\cH(T)$ is $\dbar$-closed by \eqref{projH}. Applying $\dbar$ to \eqref{pave18} and using $\dbar T=0$, we obtain
	\[
	\Delta''\, \dbar\bigl(\cG(T)\bigr)=0.
	\]
	It follows that $\dbar\bigl(\cG(T)\bigr)$ is smooth, and then an integration-by-parts argument yields
	\[
	\dbar^\star \dbar\bigl(\cG(T)\bigr)=0.
	\]
	This implies \eqref{pave19}.
\end{proof}

As an application of Theorem~\ref{decompcurrent}, we obtain the $\ddbar$-lemma for currents.

\begin{thm}\label{ddbarcurrent}
	Let $T$ be a current which is $\partial$-closed and $\dbar$-exact (in the sense of currents). Then there exists a current $S$ such that
	\[
	T = \ddbar S.
	\]
	In particular, if moreover $T$ has total degree $1$, then $T=0$.
\end{thm}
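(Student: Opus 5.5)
We prove the $\ddbar$-lemma for currents by mimicking the classical smooth argument, using the de~Rham--Kodaira decomposition (Theorem \ref{decompcurrent}) as the substitute for the smooth Hodge decomposition. Here $X$ is compact Kähler, $L$ is trivial, and $\Delta''=\frac12\Delta$ commutes with $\partial,\dbar,\partial^\star,\dbar^\star$. By duality (formula \eqref{pave17}) the operator $\cG$ on currents then commutes with $\partial,\dbar$ and their adjoints. Indeed, the smooth Green operator $G$ commutes with all of these, so for a smooth test form $\phi$ we have
\[
\int_X \cG(\dbar T)\wedge\phi=\pm\int_X T\wedge \dbar G\phi=\pm\int_X T\wedge G\dbar\phi=\int_X \dbar\cG(T)\wedge\phi .
\]
Likewise $\cH$ kills $\partial$- and $\dbar$-exact currents. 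For example, $\langle \dbar S,\xi\rangle=\pm\langle S,\dbar^\star\xi\rangle$ up to the Hodge star conventions, and this vanishes because harmonic forms are $\dbar^\star$-closed.

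The plan has four steps.

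\textbf{Step 1.} Write $T=\dbar R$. Then $\cH(T)=0$, so by \eqref{pave18} $T=\Delta''\cG(T)$. Using the Kähler identity $\Delta''=\Delta'$, we also get $T=\Delta'\cG(T)=\partial\partial^\star\cG(T)+\partial^\star\partial\cG(T)$.

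\textbf{Step 2.} Since $\partial T=0$ and $\partial$ commutes with $\cG$, we have $\partial\cG(T)=\cG(\partial T)=0$. Hence $T=\partial\partial^\star\cG(T)$.

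\textbf{Step 3.} Set $U:=\partial^\star\cG(T)$ and apply the same trick with $\dbar$ to $U$. We have $\dbar U=\dbar\partial^\star\cG(T)=-\partial^\star\dbar\cG(T)$, using $\dbar\partial^\star=-\partial^\star\dbar$ (again a Kähler identity). This vanishes because $\dbar T=\dbar\dbar R=0$ gives $\dbar\cG(T)=\cG(\dbar T)=0$. Also $\cH(U)=0$, since $U$ is $\partial^\star$-exact and $\partial^\star$-exact currents pair to zero with harmonic forms, which are $\partial$-closed. So $U=\Delta''\cG(U)=\dbar\dbar^\star\cG(U)+\dbar^\star\dbar\cG(U)$, and $\dbar\cG(U)=\cG(\dbar U)=0$. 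Therefore $U=\dbar\dbar^\star\cG(U)$.

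\textbf{Step 4.} Combining, $T=\partial U=\partial\dbar\,\dbar^\star\cG(U)$. Taking $S:=-\dbar^\star\cG(U)$ gives $T=\ddbar S$, with the sign fixed by $\partial\dbar=-\dbar\partial$.

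For the final claim, suppose $T$ has total degree $1$. Then $S$ would have total degree $-1$, so $S=0$ and hence $T=0$.

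The main point requiring care is the justification of the commutation relations for currents: that $\cG$ commutes with $\partial,\dbar,\partial^\star,\dbar^\star$, that $\cH$ vanishes on exact and coexact currents, and that the Kähler identities ($\Delta'=\Delta''$ and the anticommutation of $\dbar$ with $\partial^\star$) hold for currents. All of these are obtained by transposing the corresponding smooth identities through the pairing \eqref{pave72} and the definition \eqref{pave17}. The only delicate part is keeping track of the signs and the Hodge star/conjugation conventions in the transposes; since we only need vanishing statements and an identity up to sign, this is routine once set up.
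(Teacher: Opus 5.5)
Your proposal is correct and follows the route the paper intends. The paper gives no detailed proof: it presents the statement as a direct application of the de~Rham--Kodaira decomposition (Theorem~\ref{decompcurrent}), transposing the classical $\ddbar$-lemma argument by commuting $\cG$ and $\cH$ with $\partial,\dbar$ and their adjoints, which is exactly what you do.

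One cosmetic slip: since $T=\partial U=\partial\dbar\,\dbar^\star\cG(U)$ and the paper's $\ddbar$ means $\partial\dbar$, you should take $S:=\dbar^\star\cG(U)$ without the minus sign.
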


As a nice application of Theorem~\ref{ddbarcurrent}, we have the following result due to Noguchi \cite{Nog95}. The projective case is proved in \cite{Del71}.

\begin{cor}\cite{Nog95}\label{Noglemma}
	Let $X$ be a compact K\"ahler manifold. Let $\alpha \in H^0\bigl(X,\Omega_X^1(\log D)\bigr)$. Then $\partial \alpha = 0$.
\end{cor}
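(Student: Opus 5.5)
We apply Theorem~\ref{ddbarcurrent} to the current $T:=\partial\alpha$, which is a $(2,0)$-current on $X$. Two things must be checked: that $T$ is $\partial$-closed and that it is $\dbar$-exact in the sense of currents. Both are statements about currents on all of $X$, so we first note that $\alpha$, having only logarithmic poles along the snc divisor $D$, has locally integrable coefficients. Hence $\alpha$ defines a $(1,0)$-current $[\alpha]$ on $X$, and $\partial\alpha$, $\dbar\alpha$ are understood as $\partial[\alpha]$ and $\dbar[\alpha]$.

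The first condition is immediate: $\partial T=\partial^2[\alpha]=0$.

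For $\dbar$-exactness we use $\partial\dbar=-\dbar\partial$ on currents, which gives
\[
T=\partial[\alpha]=\dbar\bigl(-\,\cdot\,\bigr)?
\]
More directly, we avoid needing $\dbar\alpha$ at all. Since $\partial$ and $\dbar$ anticommute, we have $\partial[\alpha]= -\dbar\,\cdot$ only after we know something about $\dbar[\alpha]$, so the cleaner route is to compute that first. Locally, with $\alpha=\sum_i \frac{f_i\,dz_i}{z_i}+\beta$, where the $f_i$ and $\beta$ are holomorphic, the Poincaré--Lelong formula gives that $\dbar[\alpha]$ is (up to a constant) $\sum_i \Res_{D_i}(\alpha)\,[D_i]$, with $\Res_{D_i}(\alpha)=f_i|_{D_i}$.

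Applying $\partial$ to this: since $\dbar\partial[\alpha]=-\partial\dbar[\alpha]$, we need to understand $\partial$ of the residue currents. This is where the work lies, because $f_i|_{D_i}$ need not be constant a priori.

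The alternative is to apply Theorem~\ref{ddbarcurrent} directly to the $(1,1)$-current $S:=\dbar[\alpha]$. This current is $\dbar$-exact by definition. It is also $\partial$-closed: it is supported on $D$ and equals $\sum_i f_i|_{D_i}[D_i]$, whose $\partial$ is $\sum_i \partial(f_i|_{D_i})\wedge[D_i]$. That is not obviously zero, so this route also needs the residues to be constant.

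We therefore argue with $T=\partial[\alpha]$. It is $\partial$-closed. Its $\dbar$ is $\dbar\partial[\alpha]=-\partial\dbar[\alpha]$.

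Work on $X\setminus D$, where $\alpha$ is holomorphic, so $\partial\alpha$ there is a holomorphic $2$-form with log poles. The integration by parts with the cut-offs $\mu_\ep$ of \eqref{cutoff} should show that the current $[\partial\alpha]$ (the trivial extension) coincides with $\partial[\alpha]$. The boundary term is $\int\partial\mu_\ep\wedge\alpha\wedge\phi$, and it tends to $0$ because $|\partial\mu_\ep|\,|\alpha|$ is dominated by $\frac{1}{|z|\log(1/|z|)}\cdot\frac1{|z|}$ on a shrinking annular region. Verifying this is the main obstacle; I would try the $\rho_\ep(\log\log)$ estimate carefully, and if it fails I would pass to $\partial$-closedness on a resolution.

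Granting it, $\partial[\alpha]=[\partial\alpha]$ is a $d$-closed positive-type holomorphic $(2,0)$ form in $L^1$. We then show $\partial\alpha=0$ by the classical argument: $\int_X \partial\alpha\wedge\ol{\partial\alpha}\wedge\omega^{n-2}=\lim_{\ep\to 0}\int_X \mu_\ep\, d(\alpha\wedge\ol{\partial\alpha}\wedge\omega^{n-2})$. The limit is again controlled by the same cut-off estimate, which yields zero, and positivity gives $\partial\alpha=0$.
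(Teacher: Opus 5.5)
\textbf{There is a genuine gap.} It sits exactly where you stopped. You write that $f_i|_{D_i}$ ``need not be constant a priori'', but it must be constant. Since $D$ is snc, each $D_i$ is a smooth, compact, irreducible (hence connected) submanifold. Moreover, $\Res_{D_i}\alpha=f_i|_{D_i}$ is a globally well-defined holomorphic function on $D_i$: it is the image of $\alpha$ under the residue map $\Omega^1_X(\log D)\to\mathcal{O}_{D_i}$. Hence it is a constant $c_i$.

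This is the one input of the paper's proof, and with it the route you abandoned closes immediately. Up to a universal constant, $\dbar[\alpha]$ equals $\sum_i c_i[D_i]$. This current is $\partial$-closed, because the $[D_i]$ are closed and the $c_i$ are constants, and it is $\dbar$-exact by construction. Theorem~\ref{ddbarcurrent} then gives a current $R$ with $\dbar[\alpha]=\ddbar R=-\dbar\partial R$. So $[\alpha]+\partial R$ is a $\dbar$-closed $(1,0)$-current, hence a holomorphic $1$-form by ellipticity, hence $d$-closed because $X$ is compact K\"ahler. Therefore $\partial[\alpha]=\partial([\alpha]+\partial R)=0$. Restricting to $X\setminus D$, where $[\alpha]$ is just $\alpha$, gives $\partial\alpha=0$. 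No comparison between $\partial[\alpha]$ and the trivial extension of $\partial\alpha$ is needed, so the cut-off analysis you flagged as the main obstacle can be dropped.

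Your replacement argument does not avoid the constancy of the residues; it only hides it. Four points fail as written:
\begin{enumerate}
\item[(a)] The route through $T=\partial[\alpha]$ is circular: a $(2,0)$-current is $\dbar$-exact only if it vanishes.
\item[(b)] The claim that $[\partial\alpha]$ is $d$-closed means $\dbar\partial[\alpha]=-\partial\dbar[\alpha]=0$, i.e.\ (up to a constant) $\sum_i\partial(f_i|_{D_i})\wedge[D_i]=0$. That is exactly the term you said was not obviously zero.
\item[(c)] The positivity argument needs $\partial\alpha\in L^2$, not merely $L^1$. Locally $\partial\alpha=-\frac{dz_1}{z_1}\wedge df_1+(\text{holomorphic})$ has residue $-d(f_1|_{D_1})$ along $D_1$. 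Where this residue is non-zero, $|\partial\alpha|\ge c\,|z_1|^{-1}$, which is not square-integrable, so the left-hand side of your identity could be $+\infty$.
\item[(d)] Your bound $\frac{1}{|z|^2\log(1/|z|)}$ for the boundary terms integrates over the support of $\rho_\ep'$ (a band of width one in the variable $\log\log(1/|z|^2)$) to something of order $1$, not $o(1)$.
\end{enumerate}
For $\partial[\alpha]$ the boundary term does tend to $0$, but for a different reason: $dz_1\wedge dz_1=0$ kills the leading part of $\partial\mu_\ep\wedge\alpha$, leaving the integrable bound $\frac{1}{|z|\log(1/|z|)}$. In the $L^2$ step the same reasoning gives only $O(1)$ unless $\partial\alpha$ is already known to be bounded near $D$. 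Once the residues are known to be constant, $\partial\alpha$ is residue-free, hence holomorphic on all of $X$, and your positivity argument does go through. By then, however, the two-line current argument above has already finished the proof.
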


\begin{proof}
	Since $\alpha$ is a holomorphic one form, 
	$\dbar \alpha$ is given by a collection of constant functions on the components of $D$. In particular, this current is $\partial$-closed
and obviously, $\dbar$-exact.
	
By the $\ddbar$-lemma (Theorem~\ref{ddbarcurrent}), there exists a current $T$ such that
	\[
	\dbar \alpha = \ddbar T.
	\]
	Then $\alpha - \partial T$ is a $\dbar$-closed $(1,0)$-current on $X$.
	By elliptic regularity for $\dbar+\dbar^\star$, the current $\alpha-\partial T$ is smooth, and hence
	\[
	\alpha-\partial T \in H^0\bigl(X,\Omega_X^1\bigr).
	\]
	In particular, $\alpha-\partial T$ is $\partial$-closed, and therefore so is $\alpha$.
\end{proof}

\begin{remark}
	More generally, as proved in \cite{Del71,Nog95}, for any $\alpha \in H^0\bigl(X,\Omega_X^k(\log D)\bigr)$ one has $\partial \alpha = 0$. The proof uses the above argument together with an induction on the residues of $\alpha$ on every stratum of $D$.
\end{remark}

\medskip

Using similar ideas, we can generalize Theorem~\ref{decompcurrent} and Theorem~\ref{ddbarcurrent} to the settings of Sections \ref{conicsection} and \ref{logflat}. We first define currents in conic sense.

\begin{defn}\label{currents}
	Let $(X,\omega_X)$ be a compact K\"ahler manifold and let $L\to X$ be a holomorphic line bundle. Assume that $L$ admits a (possibly singular) metric $h_L$ such that
	\begin{equation}
		i\Theta_{h_L}(L)=\sum a_i [D_i] + \theta,
	\end{equation}
	where $D:=\sum D_i$ is an snc divisor, $a_i\in \mathbb Q$, and $\theta$ is a smooth $(1,1)$-form on $X$.
	
	Let $\omega_{\cC}$ be a conic K\"ahler metric as in \eqref{conicmetric}.
	An $L$-valued conic current $T$ of $(p,q)$-type is an $L$-valued current such that there exist a constant $C>0$ and an integer $d\ge 0$ for which
	\begin{equation}\label{deck5}
		\left|\int_X T\wedge \phi\right|^2
		\le C \sum_{s=0}^d \sup_{X\setminus Y} \bigl|\nabla^s \phi\bigr|^2_{h_L^\star,\omega_{\cC}}
	\end{equation}
	for all $L^\star$-valued $(n-p,n-q)$-forms $\phi$ which are smooth in conic sense with respect to $h_L^\star$.
	
	We denote by $\mathcal{D}(X,L)$ the space of all $L$-valued conic currents (with respect to $h_L, \omega_\cC$).
\end{defn}

\begin{exemple}\label{imporantex}
	Let $\Delta_1$ be an snc divisor such that $\Delta_1 + D$ is snc and has no common components.
	Let $u \in \mathcal{C}^\infty(X,\Delta_1,L)$ be a form which is smooth in conic sense and has logarithmic poles along $\Delta_1$ (cf.\ Definition~\ref{logconic}). Then $u\in \mathcal{D}(X,L)$.
	
	Note that, when interpreted as conic current, the derivative $\dbar u$ may have a divisorial part supported on $\Delta_1$.
\end{exemple}

In analogy with the classical de~Rham--Kodaira theorem, we can define the Green operator $\cG$ and the harmonic projection $\cH$ for currents in $\mathcal{D}(X,L)$. By the same argument as above, we obtain the following the de~Rham--Kodaira decomposition and the $\ddbar$-lemma for conic currents.

\begin{thm}[de~Rham--Kodaira]\label{decompcurrentconic}
	In the setting of Definition~\ref{currents}, let $T\in \mathcal{D}(X,L)$. Then
	\[
	T = \cH(T) + \Delta''\bigl(\cG(T)\bigr).
	\]
	If moreover $T$ is $\dbar$-closed in conic sense, then
	\[
	T = \cH(T) + \dbar T_1,
	\qquad T_1 := \dbar^\star\bigl(\cG(T)\bigr).
	\]
\end{thm}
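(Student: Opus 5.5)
The plan is to transpose the proof of Theorem~\ref{decompcurrent} to the conic setting, using the conic Hodge decomposition of Theorem~\ref{CoHo1} for the dual bundle $(L^\star,h_L^\star)$ in place of the smooth one. Note that $h_L^\star$ has curvature $-\sum a_i[D_i]-\theta$, which is again of the form \eqref{singmetr}. So Theorem~\ref{CoHo1} applies to $\cC^\infty(X,L^\star)$ and gives a harmonic space $\ker\Delta''_{h_L^\star}$, which is finite dimensional, together with a Green operator $G$ on $\cC^\infty(X,L^\star)$ satisfying $\phi=H(\phi)+\Delta'' G(\phi)$. Moreover $G$ commutes with $\dbar,\dbar^\star$ by Proposition~\ref{orbderiv}(3), i.e. by the local uniformisation.

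\textbf{Step 1: define the operators by duality.}
\begin{itemize}
\item The harmonic projection is defined by $\cH(T):=\sum_i\langle T,\xi_i\rangle\xi_i$, where $(\xi_i)$ is an orthonormal basis of conic-harmonic $L$-valued $(p,q)$-forms. The pairing $\langle T,\xi_i\rangle=\int_X T\wedge\sharp\xi_i$ makes sense because $\sharp\xi_i$ is an $L^\star$-valued form, smooth in conic sense, and harmonic for $h_L^\star$.
\item The Green operator is defined by $\int_X\cG(T)\wedge\phi:=\int_X T\wedge G(\phi)$ for $\phi\in\cC^\infty(X,L^\star)$.
\end{itemize}
To see that $\cG(T)\in\mathcal D(X,L)$, I need the estimate $\sup|\nabla^s G\phi|\le C\sum_{s'\le d'}\sup|\nabla^{s'}\phi|$. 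This follows from the conic Gårding and Sobolev inequalities (mentioned before Theorem~\ref{CoHo1}), applied on each $U_i$ to $\phi_i=w_i^{-am}\pi_i^\star\phi$. The key point is that $\sup|\nabla^s\phi|_{h_L^\star,\omega_\cC}$ is comparable to the $C^s$ norms of the local lifts $\phi_i$.

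\textbf{Step 2: the decomposition.} The formal adjoint of $\Delta''$ acting on $L$-valued forms, computed against $L^\star$-valued test forms, is $\Delta''$ for $h_L^\star$. This uses Proposition~\ref{int} and its $\dbar$ analogue, both proved with the cutoff $\mu_\ep$. So by definition $\int\Delta''\cG(T)\wedge\phi=\int T\wedge\Delta''G\phi=\int T\wedge(\phi-H\phi)$, and $\int T\wedge H\phi=\int\cH(T)\wedge\phi$ by expanding $H\phi$ in the dual basis $\sharp\xi_i$. This gives $T=\cH(T)+\Delta''\cG(T)$.

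\textbf{Step 3: the $\dbar$-closed case.} Assume $\dbar T=0$ in conic sense. The harmonic part $\cH(T)$ is $\dbar$-closed, and $\dbar$ commutes with $\cG$ and $\Delta''$ by duality. Applying $\dbar$ to the decomposition therefore gives $\Delta''\dbar\cG(T)=0$. By conic elliptic regularity, which is local and proved via the $\pi_i$, the current $\dbar\cG(T)$ is smooth in conic sense and harmonic. An integration by parts with cutoffs then shows $\|\dbar^\star\dbar\cG(T)\|^2=0$, because a harmonic form is orthogonal to $\im\dbar$. Hence $\Delta''\cG(T)=\dbar\dbar^\star\cG(T)$, which proves the claim with $T_1=\dbar^\star\cG(T)$.

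\textbf{Main obstacle.} The delicate points are the $\sup$-norm continuity of $G$ in Step 1 and the commutation of $\dbar$ with $\cG$ in the conic sense. The latter must be checked against test forms that are genuinely conic-smooth, not merely smooth off $D$; I would reduce both statements to the local nonsingular operator $\Delta''_{\rm sm}$ of Proposition~\ref{orbderiv}(3).
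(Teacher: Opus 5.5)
Your proposal is correct and is essentially the paper's argument. The paper only says that the classical de~Rham--Kodaira proof goes through verbatim with the conic Hodge decomposition of Theorem~\ref{CoHo1} in place of smooth Hodge theory: duality for the first identity, then applying $\dbar$, regularity of $\dbar\cG(T)$ and an integration by parts for the second. That is what you do, and you add useful detail on the continuity of $G$ in the norms of \eqref{deck5} and on transposing $\Delta''$ to $(L^\star,h_L^\star)$. Incidentally, once you have $\dbar\cG=\cG\dbar$ on conic currents by duality, you get $\dbar\cG(T)=\cG(\dbar T)=0$ directly, so the elliptic-regularity step is not needed.
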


\begin{thm}\label{ddbarcurrentconic}
	In the setting of Definition~\ref{currents}, let $T\in \mathcal{D}(X,L)$.
	If $T$ is $\partial$-closed and $\dbar$-exact in conic current sense, then there exists $S\in \mathcal{D}(X,L)$ such that
	\[
	T = \ddbar S
	\]
	in conic current sense. In particular, if moreover $T$ has total degree $1$, then $T=0$.
\end{thm}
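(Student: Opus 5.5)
We follow the proof of the classical $\ddbar$-lemma, replacing the smooth Hodge theory by the conic one. The two inputs are the conic Hodge decomposition (Theorem~\ref{CoHo1}) and the conic de~Rham--Kodaira decomposition (Theorem~\ref{decompcurrentconic}). On harmonic forms we use the conic Kähler identities: here $\theta$ is absent or harmless in the relevant degrees, as in the first part of Corollary~\ref{conicddabr}. By these identities a conic-smooth form is $\Delta''_{h_L}$-harmonic if and only if it is $\Delta'_{h_L}$-harmonic, and $\Delta''$ commutes with $\partial=D'_{h_L}$ and $\partial^\star$. The Green operator $\cG$ and the projection $\cH$ on $\mathcal{D}(X,L)$ are defined by duality via \eqref{pave17} and \eqref{projH}. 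Integration by parts against conic-smooth test forms is justified by Proposition~\ref{int}. Consequently $\cG$ and $\cH$ commute with $\partial,\dbar,\partial^\star,\dbar^\star$ in the conic current sense, since the corresponding smooth operators do.

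\textbf{Step 1.} Write $T=\dbar R$ with $R\in\mathcal{D}(X,L)$. Then $\cH(T)=0$: for a harmonic $\xi$, $\langle \dbar R,\xi\rangle=\pm\langle R,\dbar^\star\xi\rangle=0$, because $\sharp\xi$ is again harmonic and hence $\dbar$-closed in conic sense. By Theorem~\ref{decompcurrentconic},
\[
T=\Delta''\cG(T)=\dbar\dbar^\star\cG(T)+\dbar^\star\dbar\cG(T).
\]
Since $\dbar T=0$ and $\cG$ commutes with $\dbar$, the last term vanishes, so $T=\dbar\dbar^\star\cG(T)$.

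\textbf{Step 2.} Set $U:=\dbar^\star\cG(T)$. The Kähler identity gives $\dbar^\star=i[\Lambda,\partial]$ up to a sign convention, hence
\[
U=i\Lambda\partial\cG(T)-i\partial\Lambda\cG(T).
\]
Here $\partial\cG(T)=\cG(\partial T)=0$ because $T$ is $\partial$-closed. Therefore $U=-i\partial\Lambda\cG(T)$ and
\[
T=\dbar U=-i\,\dbar\partial\Lambda\cG(T)=\ddbar S,\qquad S:=i\Lambda\cG(T)\in\mathcal{D}(X,L).
\]
One must check that $\Lambda$, $\cG$ and the derivatives preserve the class $\mathcal{D}(X,L)$, that is, the bound \eqref{deck5} with a possibly larger order $d$. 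This holds because their adjoints preserve conic-smooth forms, by Proposition~\ref{orbderiv} and the Sobolev estimates in the local uniformisations.

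\textbf{Step 3.} If $T$ has total degree $1$, then $S$ has total degree $-1$. Such a current is zero, hence $T=0$.

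\textbf{Main obstacle.} The hardest point is the commutation of $\cG$ with $\partial=D'_{h_L}$ in the twisted setting. This needs $[\Delta'',D'_{h_L}]=0$, i.e.\ $\Delta'=\Delta''$ on conic forms, which holds only when the curvature term $\theta$ does not interfere. In the twisted case, as in Corollary~\ref{conicddabr}, this requires $\theta=0$ (which we take as part of ``the same argument''), or else restriction to the bidegrees where the Bochner term vanishes. I would first verify this identity locally through $\pi_i$, using Proposition~\ref{orbderiv}(2),(3), where it reduces to the smooth flat-twisted case, and then transfer it to currents by duality.
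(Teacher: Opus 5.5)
Your proposal is correct and is essentially the paper's argument. The paper only says the result follows ``by the same argument'' as the classical $\ddbar$-lemma for currents: the conic de~Rham--Kodaira decomposition (Theorem~\ref{decompcurrentconic}) combined with the Kähler identities and the commutation of $\cG$ with $D'_{h_L}$ and $\dbar$, which is exactly what you carry out. Your caveat that this commutation needs $\theta=0$ is also right; the paper relies on it implicitly, since it invokes the result only with $\theta=0$ (in the proof of Theorem~\ref{logddbar}), consistent with the first part of Corollary~\ref{conicddabr}.
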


\medskip

Now let $(L,\nabla)\in M_{DR}(X/D)$. We can also define the current space, i.e.\ the dual space of $\cC^\infty_K(X,L)$, as follows.

\begin{defn}\label{currentsK}
	In the setting of Definition~\ref{smoothK}, the space $\mathcal{D}_K(X,L)$
	consists in $L$-valued currents $T$ such that there exist a constant $C>0$ and positive integers $d,a$ for which the inequality
	\begin{equation}\label{deck5K}
		\left|\int_X T\wedge \phi\right|^2
		\le C \sum_{s=0}^d \int_X \bigl|\nabla_{\mathrm{cov}}^s \phi\bigr|^2_{h_L^\star,\omega_{\cC}}\, d\mu_a
	\end{equation}
holds, for all $\phi \in \cC^\infty_K(X,L^\star)$.
\end{defn}

The differential $D_K$ induces an operator op the space of currents. Notice first that 
since $h_L$ is smooth at a generic point of $\Delta_1$,  we have the inclusion
	$$\cC^\infty_K(X,\Delta_1,L) \subset \mathcal{D}_K(X,L).$$
	
\begin{defn}\label{diffcurrents}	
We denote by $\mathcal{D}_K$ the operator on $\mathcal{D}_K(X,L)$ defined by duality 
	$$\int_X \mathcal{D}_K T \wedge \phi = (-1)^\bullet\int_XT \wedge D_K \phi $$
	for every $T\in \mathcal{D}_K(X,L)$ and $\phi \in \cC^\infty _K (X, L^\star)$. 
	Then $\mathcal{D}_K$ acts on $\mathcal{D}_K(X,L)$:
	\[
	\mathcal{D}_K:\mathcal{D}_K(X,L)\to \mathcal{D}_K(X,L).
	\]
\end{defn}

Notice that we can equally have the operator
	\[
	D_K:\cC^\infty_K(X,\Delta_1,L)\to \cC^\infty_K(X,\Delta_1,L)
	\]
by disregarding the current component that may occur.
Then $\mathcal{D}_K$ coincides with $D_K$ on $\cC^\infty_K(X,L)$, but not on $\cC^\infty_K(X,\Delta_1, L)$: $\mathcal{D}_K u$ may contain a divisorial part supported on $\Delta_1$ if $u$ has a logarithmic pole along $\Delta_1$.

As in conic case, we also have a de~Rham--Kodaira decomposition and a $\mathcal{D}_K\mathcal{D}_K^c$-lemma for currents in $\mathcal{D}_K(X,L)$.

\begin{thm}[de~Rham--Kodaira]\label{decompcurrentconicK}
	In the setting of Definition~\ref{currentsK}, let $T\in \mathcal{D}_K(X,L)$.
	Then
	\[
	T = \cH(T) + \Delta_K\bigl(\cG(T)\bigr).
	\]
	If moreover $T$ is $\mathcal{D}_K$-closed, then
	\[
	T = \cH(T) + \mathcal{D}_K T_1,
	\qquad T_1 := \mathcal{D}_K^\star\bigl(\cG(T)\bigr).
	\]
\end{thm}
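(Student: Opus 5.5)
The plan is to transpose the proof of Theorem~\ref{decompcurrent} to the Schwartz-type space $\cC^\infty_K(X,L^\star)$ by duality, using Theorem~\ref{smoothhodge} as the smooth input.

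\textbf{Step 1: harmonic projection and Green operator.} Theorem~\ref{smoothhodge} holds for $(L,h_L)$, and the same construction applies to the dual $(L^\star,h_L^\star)$, which is again harmonic with the opposite residues. This gives
\[
\cC^\infty_K(X,L^\star)=\ker\Delta_K\oplus\im\Delta_K,
\]
with $\ker\Delta_K$ finite dimensional. We therefore have a Green operator $G$ on $\cC^\infty_K(X,L^\star)$, characterised by $G\phi\perp\ker\Delta_K$ and
\[
\phi=H(\phi)+\Delta_K G\phi .
\]
Let $(\xi_i)$ be an orthonormal basis of $\ker\Delta_K$ for $L$. Define $\cH(T):=\sum_i\langle T,\xi_i\rangle\xi_i$, where $\sharp\xi_i\in\cC^\infty_K(X,L^\star)$ is harmonic, and define $\cG(T)$ by
\[
\int_X\cG(T)\wedge\phi:=\int_X T\wedge G\phi .
\]

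\textbf{Step 2: continuity of $G$ (the main obstacle).} We must show that $\cG(T)\in\mathcal D_K(X,L)$, i.e.\ that $G$ is continuous for the seminorms in \eqref{deck5K}. The plan is to get this from the open mapping theorem. Equip $\cC^\infty_K$ with the Fréchet topology given by the weighted Sobolev seminorms $\int|\nabla^s_{\rm cov}\phi|^2d\mu_a$. The map $\Delta_K:(\ker\Delta_K)^\perp\cap\cC^\infty_K\to\im\Delta_K$ is continuous and bijective. Its image is closed, because it is the kernel of the continuous finite-rank projection $H$, which is continuous since the $\xi_i$ lie in $\cC^\infty_K$. The open mapping theorem then makes $G$ continuous.

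One point has to be checked: that $\Delta_K$ maps $\cC^\infty_K$ continuously to itself. This needs the covariant derivatives of the coefficients of $\Delta_K$ (in particular $\theta_0$, which has log poles along $\Delta_2$) to grow at most polynomially in $\log|s_{D_i}|$. That growth is absorbed by the weights $d\mu_a$.

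\textbf{Step 3: the decomposition.} The first identity follows by pairing with an arbitrary $\phi$:
\[
\int_X T\wedge\phi=\int_X T\wedge H\phi+\int_X T\wedge\Delta_K G\phi .
\]
The first term on the right equals $\int_X\cH(T)\wedge\phi$, using that $\sharp$ exchanges the harmonic spaces of $L$ and $L^\star$. The second term equals $\int_X\Delta_K\cG(T)\wedge\phi$, by Definition~\ref{diffcurrents}, the self-adjointness of $\Delta_K$, and the fact that $G$ commutes with $D_K$ and $D_K^\star$. That commutation comes from the commutation relations in Proposition~\ref{hodgelap} together with the uniqueness in Theorem~\ref{smoothhodge}.

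\textbf{Step 4: the $\mathcal D_K$-closed case.} Assume $\mathcal D_K T=0$. The harmonic part satisfies $\mathcal D_K\cH(T)=0$, since harmonic forms are $D_K$-closed. Applying $\mathcal D_K$ to the decomposition gives $\Delta_K\mathcal D_K\cG(T)=0$; equivalently, $\cG(\mathcal D_K T)=\mathcal D_K\cG(T)$ and this vanishes. It follows that
\[
\Delta_K\cG(T)=\mathcal D_K\mathcal D_K^\star\cG(T),
\]
which is the statement with $T_1:=\mathcal D_K^\star\cG(T)$.
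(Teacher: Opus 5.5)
Your proposal is correct and follows the route the paper indicates. The paper gives no separate proof and only says that the classical argument of Theorem~\ref{decompcurrent} carries over, and your three steps match it: define $\cH$ and $\cG$ by duality against the smooth decomposition of Theorem~\ref{smoothhodge} on $\cC^\infty_K(X,L^\star)$, pair with test forms to get the first identity, and apply $\mathcal{D}_K$ in the closed case.

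There are two small differences. First, you actually justify that $\cG(T)\in\mathcal{D}_K(X,L)$, via the open mapping theorem on the Fr\'echet space $\cC^\infty_K$; the paper leaves this implicit. Second, you get $\mathcal{D}_K\cG(T)=\cG(\mathcal{D}_K T)=0$ from the commutation of $G$ with $D_K$. The paper's template instead argues that the $\Delta_K$-harmonic current $\mathcal{D}_K\cG(T)$ is smooth, then integrates by parts to get $\mathcal{D}_K^\star\mathcal{D}_K\cG(T)=0$. Your version has the advantage of not needing a regularity result for harmonic currents near the Poincar\'e part $\Delta_2$.
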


\begin{thm}\label{ddbarcurrentconicK}
	In the setting of Definition~\ref{currentsK}, let $T\in \mathcal{D}_K(X,L)$.
	If $T$ is $\mathcal{D}_K$-closed and $\mathcal{D}_K^c$-exact, then there exists $S\in \mathcal{D}_K(X,L)$ such that
	\[
	T = \mathcal{D}_K\circ \mathcal{D}_K^c\, S.
	\]
	In particular, if moreover $T$ has total degree $1$, then $T=0$.
\end{thm}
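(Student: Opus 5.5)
The plan is to imitate the classical proof of the $\ddbar$-lemma. The inputs are the current version of the Hodge decomposition (Theorem~\ref{decompcurrentconicK}) and the identities of Proposition~\ref{hodgelap}. The latter say that $D_K$, $D_K^c$ and their adjoints commute with $\Delta_K$, and that $\Delta_K$ is a constant multiple of the Laplacians of $D''_K$ and $D'_K$, hence also of the Laplacian of $D_K^c$.

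First I transfer these identities to currents by duality. Note that $\cC^\infty_K(X,L^\star)$ is preserved by $D_K$, $D_K^c$, their adjoints and the Green operator $G$; this follows from the elliptic regularity proposition and Theorem~\ref{smoothhodge} applied to $L^\star$. Consequently the operators $\cH$ and $\cG$ defined by duality on $\mathcal{D}_K(X,L)$ commute with $\mathcal{D}_K$, $\mathcal{D}_K^c$ and their formal adjoints. Moreover, for every current $T$ we get
\[
\cH(\mathcal{D}_K T)=0, \qquad \cH(\mathcal{D}_K^c T)=0,
\]
because harmonic forms in $\cC^\infty_K$ are $D_K$-, $D_K^c$-, $D_K^\star$- and $(D_K^c)^\star$-closed. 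This last point comes from Proposition~\ref{hodgelap} together with the integration by parts allowed in $\cC^\infty_K$: the weights $d\mu_a$ and the completeness of $\omega_D$ along $\Delta_2$ make the cut-off argument work.

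Now write $T=\mathcal{D}_K^c R$. Then $\cH(T)=0$, so the decomposition gives $T=\Delta_K\cG(T)$. Since $\Delta_K$ is proportional to the $D_K^c$-Laplacian, this becomes
\[
T = c\bigl(\mathcal{D}_K^c (\mathcal{D}_K^c)^\star + (\mathcal{D}_K^c)^\star \mathcal{D}_K^c\bigr)\cG(T).
\]
The term $\mathcal{D}_K^c\cG(T)=\cG(\mathcal{D}_K^c T)$ vanishes, since $\mathcal{D}_K^c T=(\mathcal{D}_K^c)^2R=0$. Hence $T=c\,\mathcal{D}_K^c\,\alpha$ with $\alpha:=(\mathcal{D}_K^c)^\star\cG(T)$.

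Next I decompose $\alpha$ as
\[
\alpha=\cH(\alpha)+\Delta_K\cG(\alpha),
\]
this time using that $\Delta_K$ is proportional to the $D_K$-Laplacian. The harmonic part is killed by $\mathcal{D}_K^c$. Using the anticommutation $D_KD_K^c=-D_K^cD_K$ and $\mathcal{D}_K T=0$, one sees that $\mathcal{D}_K\alpha=\pm(\mathcal{D}_K^c)^\star\cG(\mathcal{D}_K T)=0$; the relation $D_K (D_K^c)^\star = -(D_K^c)^\star D_K$ used here is the Kähler identity \eqref{kahleridentity} combined with $D_K^2=0$ and $(D_K^c)^2=0$. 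Therefore
\[
\alpha=\cH(\alpha)+c'\,\mathcal{D}_K\mathcal{D}_K^\star\cG(\alpha),
\]
and hence
\[
T=c\,c'\,\mathcal{D}_K^c\mathcal{D}_K\,\mathcal{D}_K^\star\cG(\alpha)=\mathcal{D}_K\mathcal{D}_K^c S
\]
with $S:=-c\,c'\,\mathcal{D}_K^\star\cG(\alpha)\in\mathcal{D}_K(X,L)$. In the case of total degree $1$, the current $S$ has degree $-1$, so $S=0$ and therefore $T=0$.

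The main obstacle is justifying that all of these identities, which hold pointwise on $X\setminus(\Delta_2\cup\Delta_3)$ or in conic sense for forms, remain valid after dualising. Concretely, I need $\cC^\infty_K(X,L^\star)$ to be stable under $G$, under the adjoints, and under the Kähler identities in the conic sense along $\Delta_3$. I also need the integration by parts with boundary terms vanishing near $\Delta_2$. I would first verify this using the weighted estimate following Theorem~\ref{keyinequ} and the decay \eqref{devrgood}, as in the proof of Proposition~\ref{int} with the cutoff $\mu_\ep$.
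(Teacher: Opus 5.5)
Your proposal is correct and takes the route the paper indicates without giving details: apply the de Rham--Kodaira decomposition of Theorem~\ref{decompcurrentconicK} on $\mathcal{D}_K(X,L)$, then run the classical $\ddbar$-lemma argument, using that $\Delta_K$ equals the Laplacians of $D_K$ and $D_K^c$, the Kähler identity $(D_K^c)^\star=-i[\Lambda_\omega,D_K]$, and duality with $\cC^\infty_K(X,L^\star)$ to transfer the commutation identities to currents. The inputs you flag (stability of $\cC^\infty_K(X,L^\star)$ under $G$ and the adjoints, and integration by parts near $\Delta_2$ and along the conic locus $\Delta_3$) are exactly what the paper takes for granted when it asserts Theorem~\ref{decompcurrentconicK}.
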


			\bigskip
			\subsection{Regularity lemma}\label{regularitysection}
			
			In this subsection, we address another issue mentioned in the introduction to this section. Namely, given a logarithmic form $u$, if $u$ is $\dbar$-exact in the sense of currents modulo a divisorial part, then $u$ is $\dbar$-exact in the logarithmically smooth sense.
			This was proved in \cite{LRW} in the classical setting and was generalized to the current space $\mathcal{D}_K(X,L)$ in \cite{CDHP}. To explain the idea, we sketch here the proof in the conic current setting.
			
			\begin{lemme}\label{reglemma1}
				Let $X$ be a compact K\"ahler manifold, and let $D=\Delta_1+\Delta_2$ be an snc divisor on $X$.
				Let $L$ be a holomorphic line bundle on $X$ endowed with a possibly singular metric $h_L$ such that
				\[
				i\Theta_{h_L}(L)=\sum_{D_i\subset \Delta_2} a_i [D_i]+\theta
				\]
				for some $a_i\in \mathbb Q$, where $\theta$ is a smooth form.
				Let $u\in \cC^\infty_{(p,q)}(X,\Delta_1,L)$ be such that
				\begin{equation}\label{eqcurrent}
					u=\dbar T + T_{\Delta_1}
				\end{equation}
				in the conic current sense, where $T$ is a conic current and $T_{\Delta_1}$ is a order-zero conic current supported on $\Delta_1$.
				Then there exists $v\in \cC^\infty_{(p,q-1)}(X,\Delta_1,L)$ such that
				\[
				u=\dbar v \qquad \text{on } X\setminus \Delta_1
				\]
				in the conic sense.
			\end{lemme}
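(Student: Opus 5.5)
Following \cite{LRW}, I would reduce the statement to a Hodge-theoretic vanishing on a compact model, in the spirit of the proof of Theorem \ref{classsol}. Set $E:=\Omega^p_X(\log \Delta_1)\otimes L$ with the metric $g_D$ of \eqref{log2} twisted by $h_L$. Through the identification $\mathcal{I}$ of \eqref{identifi}, $u$ becomes a conic-smooth $E$-valued $(0,q)$-form. On $X\setminus\Delta_1$ the relation \eqref{eqcurrent} gives $\dbar u=0$ there. Pulling back by the local covers $\pi_i$, $\mathcal{I}(u)$ extends as a smooth section of $\Omega^p_{U_i}(\log \pi_i^{-1}\Delta_1)\otimes L$, and a $\dbar$-closed form off a divisor that is smooth across it is closed everywhere. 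Hence $\dbar\mathcal{I}(u)=0$ in the conic sense on $X$.

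\textbf{Step 1.} Apply Theorem \ref{CoHo2} to write
\[
\mathcal{I}(u)=u_0+\dbar u_1,\qquad u_1=\dbar^\star G(\mathcal{I}(u))\in \cC^\infty_{(0,q-1)}(X,E_p),
\]
with $u_0\in\Ker\Delta''_{g_D,\omega_\cC}$. Then $v:=\mathcal{I}^{-1}(u_1)\in \cC^\infty_{(p,q-1)}(X,\Delta_1,L)$ satisfies $\dbar v=u-\mathcal{I}^{-1}(u_0)$ on $X\setminus\Delta_1$. So the whole statement reduces to showing $u_0=0$.

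\textbf{Step 2.} Pair with the harmonic form. Let $\sharp u_0$ be the conjugate-linear Hodge star, an $E_p^\star$-valued $(n,n-q)$-form that is harmonic and conic smooth. As in Theorem \ref{classsol}, contraction gives $\iota(\sharp u_0)$, an $L^\star$-valued $(n-p,n-q)$-form which is smooth in the conic sense. Since $g_D$ is adapted to $\log\Delta_1$, after pulling back by $\pi_i$ its coefficients are divisible by $z_j$ in the $dz_j$ directions for $D_j\subset\Delta_1$. Moreover $\dbar(\sharp u_0)=0$. Then
\[
\|u_0\|^2=\int_X \mathcal{I}(u)\wedge\sharp u_0=\int_X u\wedge\iota(\sharp u_0),
\]
because $\dbar u_1$ pairs to zero with a $\dbar$-closed form; the integration by parts is justified on the covers by Proposition \ref{int}, since everything is conic smooth. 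Now test \eqref{eqcurrent} against $\phi:=\iota(\sharp u_0)$, which is an admissible conic test form. This gives
\[
\int u\wedge\phi=\pm\int T\wedge\dbar\phi+\int T_{\Delta_1}\wedge\phi .
\]

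\textbf{Step 3.} Show both terms vanish. This is the main obstacle.

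For the first term, I would show $\dbar\phi=0$ as a form, since $\iota$ commutes with $\dbar$ up to the holomorphic contraction and $\dbar(\sharp u_0)=0$. One has to check that in the conic current sense $\dbar\phi$ has no divisorial part. This holds because $\phi$ is genuinely smooth (in the conic sense) with no log poles.

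For the second term, $T_{\Delta_1}$ has order zero and is supported on $\Delta_1$, so it pairs only with the restriction of $\phi$ to $\Delta_1$, i.e.\ with its pullback to the components of $\Delta_1$. By bidegree, $T_{\Delta_1}=\dbar u-\dbar\dbar T$ near a generic point of $D_j\subset\Delta_1$ is a residue-type current. Concretely, $\dbar(dz_j/z_j)$ produces $[z_j=0]$, so $T_{\Delta_1}$ acts through the components of $\phi$ whose normal direction is $dz_j$. Since $\phi=\iota(\sharp u_0)$ is a contraction against log forms, the relevant coefficient $\phi\wedge dz_j$ is divisible by $z_j$ and therefore vanishes on $D_j$. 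I would make this precise by writing a local normal form for order-zero currents supported on a smooth hypersurface: $T_{\Delta_1}=[D_j]\wedge\beta$ with measure coefficients, after first using $\dbar T_{\Delta_1}=\dbar u$ off lower strata. Lower-dimensional strata are handled by the fact that order-zero currents of the relevant bidimension cannot charge codimension $\ge 2$ sets unless the degree allows it; if necessary this is done by induction on strata, as in \cite{LRW}.

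Once both terms vanish, $u_0=0$, and Step 1 gives the required $v$.
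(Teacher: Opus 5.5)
Your Steps 1 and 2 and your handling of the $\dbar T$ term are exactly the paper's proof: the identification $\cI$, Theorem \ref{CoHo2}, pairing with $\sharp u_0$, transfer by contraction to $\phi=\iota(\sharp u_0)$, and $\dbar\phi=\iota(\dbar\sharp u_0)=0$. The gap is in the term $\int_X T_{\Delta_1}\wedge\phi$. Your opening claim there is false: an order-zero current supported on $\Delta_1$ does not only see the pullback of $\phi$ to $\Delta_1$. It sees the restrictions to $D_j$ of \emph{all} coefficients of $\phi$, and the coefficients of the terms of $\phi$ containing $dz_j$ need not vanish on $D_j$. Your proposed repair cannot work either. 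From \eqref{eqcurrent} you only get $\dbar T_{\Delta_1}=\dbar u$ (you wrote ``$T_{\Delta_1}=\dbar u-\dbar\dbar T$''). Being $\dbar$-closed and of order zero does not make $T_{\Delta_1}$ a normal current, so two tools are unavailable: a tangential normal form $[D_j]\wedge\beta$, and a ``no mass on codimension $\ge 2$ strata'' principle. Both are Federer-type facts that need control of $dT$, not just $\dbar T$. For instance, $\delta_{\{z_1=0\}}f(z_2)\,d\bar z_1=\dbar\bigl(f(z_2)/(\pi z_1)\bigr)$ with $f$ holomorphic is $\dbar$-closed, of order zero, supported on $\{z_1=0\}$, and not tangential.

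In fact, under that literal reading the lemma is false. Take $X=E\times E'$, a product of elliptic curves with flat coordinates $z_1,z_2$, and set $\Delta_1=\{z_2=0\}$, $\Delta_2=0$, $L$ trivial, $(p,q)=(0,1)$. Choose a cut-off $\chi(z_2)$ equal to $1$ near $0$, and put $T:=\chi/z_2$ and $u:=\dbar\chi/z_2$. Then $u$ is smooth and $u=\dbar T-\pi\,\delta_{\{z_2=0\}}\,d\bar z_2$, where the last term is of order zero and supported on $\Delta_1$. However, $\int_X u\wedge dz_1\wedge d\bar z_1\wedge dz_2$ is a nonzero multiple of $\int_{\CC}(\partial_{\bar z_2}\chi/z_2)\,d\lambda=-\pi$, so $u$ is not $\dbar$-exact; yet the lemma would give $u=\dbar v$ with $v$ a smooth function. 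So the hypothesis has to be read as in the applications (Theorems \ref{ddbar01} and \ref{logddbar}), and this is what the paper's one-line ``since $T_{\Delta_1}$ has order $0$'' tacitly uses. Namely, $T_{\Delta_1}=\sum_j[R_j]$ is a sum of push-forwards of conic currents on the components (or strata) of $\Delta_1$; equivalently, it annihilates every form whose pullback to $\Delta_1$ vanishes. Let $i_{D_j}\colon D_j\hookrightarrow X$ be the inclusion. Then $\int_X T_{\Delta_1}\wedge\phi=\sum_j\int_{D_j}R_j\wedge i_{D_j}^*\phi=0$ immediately, because every term of $\phi$ either contains $dz_j$ or has a coefficient divisible by $z_j$. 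This comes from contracting against $z_j\partial_{z_j}$, not from $g_D$. No normal form or induction on strata is needed, and with this replacement your argument coincides with the paper's.
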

			
			\begin{proof}
				Set
				\[
				E := \Omega_X^p(\log \Delta_1)\otimes L,
				\]
				which is a holomorphic vector bundle. Let $g_D$ be the Hermitian metric on $E$ induced by \eqref{log2}.
				Let $\mathcal{C}^\infty_{(0,q)}(X,E)$ be the space introduced before Theorem~\ref{CoHo2}.
				As explained there, we have an identification
				\[
				\cI:\cC^\infty_{p,q}(X,\Delta_1,L)\longrightarrow \cC^\infty_{0,q}(X,E).
				\]
				Let $(E^\star,g_D^\star)$ be the dual bundle with the dual metric, which similarly gives rise to the space $\cC^\infty_{n,q}(X,E^\star)$.
				We also introduced the natural map
				\[
				\iota:\cC^\infty_{n,q}(X,E^\star)\longrightarrow \cC^\infty_{n-p,n-q}(X,L^\star),
				\]
				obtained by contraction with vector fields. One checks that
				\begin{equation}\label{contracteq}
					\int_X \cI(u)\wedge s \;=\; \int_X u\wedge \iota(s)
				\end{equation}
				for every $u\in \cC^\infty_{p,q}(X,\Delta_1,L)$ and $s\in \cC^\infty_{n,q}(X,E^\star)$.
				
				\medskip
				
				We remark that \eqref{eqcurrent} implies $\dbar \cI(u)=0$ in the conic sense.
				Therefore, the Hodge decomposition theorem (Theorem~\ref{CoHo2}) yields
				\begin{equation}\label{reg4}
					\cI(u)=\mathcal H + \dbar v_1,
				\end{equation}
				where $\mathcal H$ is $\Delta''$-harmonic and $v_1\in \cC^\infty_{(0,q-1)}(X,E)$.
				
				\medskip
				
				We claim that $\mathcal H=0$. Introduce the following antilinear operator
				\[
				\sharp:\cC^\infty_{(0,q)}(X,E)\longrightarrow \cC^\infty_{(n,n-q)}(X,E^\star)
				\]
				defined by the property that
				\[
				\int_X \langle s_1,s_2\rangle_{g_D}\,\omega_{\cC}^n
				= \int_X s_1\wedge \sharp s_2
				\]
				for all $s_1,s_2\in \cC^\infty_{(0,q)}(X,E)$.
				Since $\mathcal H$ is harmonic, we have
				\begin{equation}\label{harmpart}
					\dbar(\sharp \mathcal H)=0.
				\end{equation}
				Moreover,
				\begin{align}\label{reg5}
					\int_X |\mathcal H|^2\,\omega_{\cC}^n
					&= \int_X \langle \cI(u),\mathcal H\rangle\,\omega_{\cC}^n
					= \int_X \cI(u)\wedge \sharp \mathcal H \\
					&= \int_X u\wedge \iota(\sharp \mathcal H), \nonumber
				\end{align}
				where the last equality follows from \eqref{contracteq}. Using \eqref{eqcurrent}, we obtain
				\begin{equation}\label{reg7}
					\int_X u\wedge \iota(\sharp \mathcal H)
					= \int_X (\dbar T + T_{\Delta_1})\wedge \iota(\sharp \mathcal H).
				\end{equation}
				Note that $\iota(\sharp \mathcal H)$ is obtained by contraction with $T_X\langle \Delta_1\rangle$. Since $T_{\Delta_1}$ has order $0$, we have
				\[
				\int_X T_{\Delta_1}\wedge \iota(\sharp \mathcal H)=0.
				\]
				Therefore,
				\begin{align}\label{reg8}
					\int_X |\mathcal H|^2\,\omega_{\cC}^n
					&= \int_X \dbar T\wedge \iota(\sharp \mathcal H)
					= (-1)^\bullet \int_X T\wedge \dbar\bigl(\iota(\sharp \mathcal H)\bigr) \\
					&= (-1)^\bullet \int_X T\wedge \iota\bigl(\dbar(\sharp \mathcal H)\bigr)
					=0, \nonumber
				\end{align}
				where the last equality follows from \eqref{harmpart}. Hence $\mathcal H=0$.
				
				\medskip
				
				Consequently, \eqref{reg4} gives $\cI(u)=\dbar v_1$ for some $v_1\in \cC^\infty_{(0,q-1)}(X,E)$. Setting $v:=\cI^{-1}(v_1)\in \cC^\infty_{(p,q-1)}(X,\Delta_1,L)$, we obtain
				\[
				u=\dbar v \qquad \text{on } X\setminus \Delta_1
				\]
				in the conic sense.
			\end{proof}
			
			Now, combining Theorem~\ref{classsol} and Lemma~\ref{reglemma1}, we obtain the following corollary.
			
			\begin{cor}\label{regcor}
				Let $X$ be a compact K\"ahler manifold, and let $D=\Delta_1+\Delta_2$ be an snc divisor on $X$.
				Let $L$ be a holomorphic line bundle on $X$ endowed with a possibly singular metric $h_L$ such that
				\[
				i\Theta_{h_L}(L)=\sum_{D_i\subset \Delta_2} a_i [D_i]+\theta
				\]
				for some $a_i\in [-1,0[\cap \mathbb Q$, where $\theta$ is a smooth form.
				Let $A^{p,q}(X,D,L)$ be the space of $L$-valued smooth form with logarithmic poles along $D$ in the classical sense. 
				
				Let $u\in A^{p,q}(X,D,L)$. Then
				\[
				u\in \cC^\infty(X,\Delta_1,L)\subset \mathcal{D}(X,L).
				\]
				Moreover, if
				\[
				u=\dbar T + T_{\Delta_1}
				\]
				in the conic current space $\mathcal{D}(X,L)$, where $T\in \mathcal{D}(X,L)$ and $T_{\Delta_1}\in \mathcal{D}(X,L)$ is a order-zero current supported on $\Delta_1$, then there exists
				a $\beta \in A^{p,q-1}(X,D,L)$ such that 
				\[
				u=\dbar \beta \qquad \text{on } X\setminus D.
				\]
			\end{cor}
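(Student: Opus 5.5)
The plan is to chain the two previous results, as the text announces. First I check the membership claims. Let $u\in A^{p,q}(X,D,L)$. Away from $\Delta_2$ the metric $h_L$ is smooth, so $u$ is simply a smooth form with logarithmic poles along $\Delta_1$. Near a component $D_i\subset\Delta_2$, with local equation $z_i=0$, pull back by the $m$-ramified cover $\pi_i$. Then
\[
\pi_i^\star\!\left(\frac{dz_i}{z_i}\right)=m\,\frac{dw_i}{w_i},
\]
and division by $w_i^{a_im}$ with $a_im\le -1$ (since $a_i<0$) multiplies by a positive power of $w_i$. This kills the pole, exactly as in the Example following Proposition~\ref{orbderiv}. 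The smooth coefficients of $u$ pull back to smooth functions of the $w$'s. Hence $\pi_i^\star u/w_i^{am}$ is smooth with at most logarithmic poles along $\pi_i^{-1}(\Delta_1)$, that is, $u\in\cC^\infty(X,\Delta_1,L)$. Finally $\cC^\infty(X,\Delta_1,L)\subset\mathcal{D}(X,L)$ by Example~\ref{imporantex}: logarithmic poles are locally integrable, and conic-smooth test forms are bounded by Proposition~\ref{orbderiv}(1), which gives an estimate of type \eqref{deck5} with $d=0$.

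Next, I would apply Lemma~\ref{reglemma1} to $u$, viewed in $\cC^\infty_{(p,q)}(X,\Delta_1,L)$, using the hypothesis $u=\dbar T+T_{\Delta_1}$ in $\mathcal{D}(X,L)$. The hypotheses match exactly: the curvature decomposition only involves $\Delta_2$, $T$ is a conic current, and $T_{\Delta_1}$ is an order-zero current supported on $\Delta_1$. The lemma produces $v_1\in\cC^\infty_{(p,q-1)}(X,\Delta_1,L)$ with $u=\dbar v_1$ on $X\setminus\Delta_1$ in the conic sense. This is precisely the hypothesis of Theorem~\ref{classsol}, and that theorem additionally requires $a_i\in[-1,0[\cap\mathbb Q$, which is assumed here. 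Theorem~\ref{classsol} then gives $\beta\in A^{p,q-1}(X,D,L)$ with $u=\dbar\beta$ on $X\setminus D$, which is the conclusion.

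The only point needing care is the bookkeeping of which notion of ``$\dbar$-exact'' each result uses. Lemma~\ref{reglemma1} outputs conic exactness on $X\setminus\Delta_1$, meaning pointwise equality off $D$ after pulling back to the local covers. Theorem~\ref{classsol} takes exactly that as input. The residual obstruction in Theorem~\ref{classsol} is the harmonic part of $\mathcal I(u)$ for the classical smooth metric on $\Omega^p_X(\log D)\otimes L$. That theorem kills this obstruction using the bound $a_i\ge -1$, which controls the poles of $d\bar w_i\wedge\pi^\star v_1$. I therefore expect no new obstacle: the main work is verifying that the input of one result literally matches the output of the other, including the sign condition $a_i<0$ used for the inclusion $A^{p,q}(X,D,L)\subset\cC^\infty(X,\Delta_1,L)$.
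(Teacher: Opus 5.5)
Your proposal is correct and takes the same route as the paper. The paper obtains Corollary~\ref{regcor} by feeding the output of Lemma~\ref{reglemma1} (conic $\dbar$-exactness with a primitive in $\cC^\infty_{(p,q-1)}(X,\Delta_1,L)$) directly into Theorem~\ref{classsol}, whose hypothesis $a_i\in[-1,0[$ yields the classical logarithmic primitive $\beta$; your check that $A^{p,q}(X,D,L)\subset\cC^\infty(X,\Delta_1,L)\subset\mathcal{D}(X,L)$ via $a_i<0$ and Example~\ref{imporantex} makes explicit what the paper takes for granted.
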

			
			Lemma~\ref{reglemma1} can be generalized to the space $\cC^\infty_K(X,\Delta_1,L)$ with respect to the operator $D_K$. More precisely, we proved the following regularity lemma.
			
			\begin{lemme}\label{regularprop}\cite[Prop 10.17]{CDHP}
				In the setting of Definition~\ref{smoothK}, let $u\in \cC^\infty_K(X,\Delta_1,L)$. Assume that there exists a current $T\in \mathcal{D}_K(X,L)$ such that
				\begin{equation}\label{reg1}
					u=\mathcal{D}_K T + T_{\Delta_1},
				\end{equation}
				where $T_{\Delta_1}\in \mathcal{D}_K(X,L)$ is a order-zero current supported on $\Delta_1$.
				Then there exists $v\in \cC^\infty_K(X,\Delta_1,L)$ such that
				\begin{equation}\label{reg2}
					u = D_K v \qquad \text{on } X\setminus \Delta_1.
				\end{equation}
			\end{lemme}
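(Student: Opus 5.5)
The plan is to copy the proof of Lemma~\ref{reglemma1}, replacing the conic Hodge theory for $\dbar$ (Theorem~\ref{CoHo2}) with a Hodge theory for $D_K$ on logarithmic forms. Set $E_s:=\Omega^s_X(\log\Delta_1)\otimes L$ and regard $u$ as a section $\cI(u)$ of $\bigoplus_{p+q=\bullet} C_{(0,q)}(X,E_p)$. On $T_X\langle\Delta_1\rangle$ we use the metric $g_D$ of \eqref{log2}, built with $\omega_D$ in place of $\omega_\cC$; it is Poincaré along $\Delta_2$, conic along $\Delta_3$, and smooth along $\Delta_1$ for the frames $dz_i/z_i$. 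The operator $D_K$ acting on logarithmic forms, with the divisorial part on $\Delta_1$ discarded, makes sense on these bundle-valued forms. Its principal part is unchanged, and its zero-order perturbation is regular at generic points of $\Delta_1$, because $h_L$ is smooth there and $\Res_{\Delta_1}(L,\nabla)=0$ by Lemma~\ref{choicemetric}.

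\textbf{Step 1: closedness.} Equation \eqref{reg1} forces $D_K\cI(u)=0$ on $X\setminus\Delta_2$ in conic sense. Indeed, away from $\Delta_1$ the current $\mathcal D_K u=\mathcal D_K T_{\Delta_1}$ vanishes. Near $\Delta_1$, the smooth part of $\mathcal D_K u$ is exactly $D_K\cI(u)$, while the remainder is supported on $\Delta_1$. A current supported on $\Delta_1$ that equals a locally integrable log form must vanish.

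\textbf{Step 2: Hodge decomposition.} Establish the analogue of Theorem~\ref{smoothhodge} for $(E_\bullet,g_D,\omega_D)$:
\[
\cI(u)=\mathcal H+D_K v_1,\qquad \Delta_K\mathcal H=0,\qquad v_1\in\cC^\infty_K(X,E_\bullet).
\]
This is the main obstacle. The proof of Theorem~\ref{smoothhodge} relies on the key estimate Theorem~\ref{keyinequ} near $\Delta_2$. One must check that twisting by $\Omega^p(\log\Delta_1)$ with $g_D$ only adds bounded curvature terms along $\Delta_2$ and $\Delta_3$, since $\Delta_1$ is disjoint in codimension one and $g_D$ is locally the pullback of a smooth metric. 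The $|\theta_0|^2$ term then still dominates near $\Delta_2$. Near $\Delta_1$ the problem is smooth after passing to log frames, so the elliptic regularity and the compactness argument go through verbatim.

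\textbf{Step 3: $\mathcal H=0$.} Define the antilinear star operator $\sharp$ with respect to $(g_D,\omega_D)$. Harmonicity gives $D_K^{\star}\mathcal H=0$, which translates into $\mathcal D_K(\sharp\mathcal H)=0$ for the dual flat structure on $L^\star$; Proposition~\ref{hodgelap} ensures that harmonic for $\Delta_K$ means both $D_K$- and $D_K^\star$-closed. Then
\[
\|\mathcal H\|^2=\int_X\cI(u)\wedge\sharp\mathcal H=\int_X u\wedge\iota(\sharp\mathcal H)=\int_X(\mathcal D_KT+T_{\Delta_1})\wedge\iota(\sharp\mathcal H).
\]
To justify these equalities, we need $\iota(\sharp\mathcal H)\in\cC^\infty_K(X,L^\star)$. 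This holds because harmonic forms satisfy the weighted bounds \eqref{devrgood}, and contraction with log vector fields produces a form that is genuinely smooth across $\Delta_1$. The term involving $T_{\Delta_1}$ vanishes: since $T_{\Delta_1}$ has order zero and is supported on $\Delta_1$, and $\iota(\sharp\mathcal H)$ is a contraction by $T_X\langle\Delta_1\rangle$, its restriction to $\Delta_1$ pairs trivially. Integrating by parts in $\mathcal D_K$, and using that $\iota$ commutes with $D_K$ (contraction by log vector fields intertwines the residue-free part of the connection), we obtain
\[
\pm\int_X T\wedge\iota(\mathcal D_K\sharp\mathcal H)=0 .
\]
Hence $\mathcal H=0$. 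We may therefore take $v:=\cI^{-1}(v_1)\in\cC^\infty_K(X,\Delta_1,L)$, which gives \eqref{reg2}.
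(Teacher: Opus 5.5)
Your proposal follows the route the paper itself indicates for this lemma: it transplants the proof of Lemma~\ref{reglemma1} from $\dbar$ to $D_K$. You identify $u$ via $\cI$ with a form valued in $\Omega^\bullet_X(\log\Delta_1)\otimes L$, decompose it with a Hodge theory for $D_K$ built from $g_D$ and $\omega_D$, and kill the harmonic part by pairing with the $D_K$-closed form $\iota(\sharp\mathcal H)$, which annihilates $T_{\Delta_1}$; the paper defers the details to \cite[Prop.~10.17]{CDHP}. All the analytic weight sits in your Step~2 together with the weighted bounds \eqref{devrgood} for the new harmonic forms (these place $\iota(\sharp\mathcal H)$ in $\cC^\infty_K(X,L^\star)$), and they must be re-proved rather than quoted: with $g_D$ on the $(p,0)$-part the Kähler identities behind Proposition~\ref{hodgelap} and Theorem~\ref{keyinequ} are lost, and in log frames the $(1,0)$-symbol of $\nabla$ degenerates along $\Delta_1$, so ellipticity there comes from the $\dbar$-part, not from an ``unchanged principal part''.
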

			
			\bigskip
			
			\subsection{A log canonical vanishing theorem}
			
			Finally, as an application of the theory established above, we prove the following vanishing theorem, which will be very useful.
			
			\begin{thm}\label{ddbar01}\cite[Thm 1.1]{JCMP}
				Let $X$ be an $n$-dimensional compact K\"ahler manifold and let $D=\Delta_1+\Delta_2$ be an snc divisor on $X$.
				Let $s_{\Delta_1}$ be the canonical section of $\Delta_1$.
				Let $(L,h_L)$ be a holomorphic line bundle on $X$ such that
				\[
				i\Theta_{h_L}(L)=\sum_{D_i\subset \Delta_2} a_i [D_i]+\theta,
				\]
				where $a_i\in ]0,1[\cap \mathbb Q$ and $\theta\ge 0$ is a smooth semipositive form.
				
				Let $\lambda$ be a $\dbar$-closed smooth $(n,q)$-form with values in $L+\Delta_1$. If there exist two $L$-valued forms $\beta_1$ and $\beta_2$ with logarithmic poles along $D$ such that
				\begin{equation}\label{relation}
					\frac{\lambda}{s_{\Delta_1}}
					= D'_{h_L}\beta_1 + \theta \wedge \beta_2
					\qquad \text{on } X\setminus D,
				\end{equation}
				then $\lambda$ is $\dbar$-exact, i.e.\
				\[
				[\lambda]=0 \in H^q\bigl(X,K_X+L+\Delta_1\bigr).
				\]
			\end{thm}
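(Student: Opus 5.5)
We argue by Serre duality, and we test $\lambda$ only against harmonic representatives for which hypothesis \eqref{relation} can be integrated by parts. By Serre duality, $[\lambda]=0$ in $H^q(X,K_X+L+\Delta_1)$ if and only if
\[
\int_X \lambda\wedge\gamma=0
\]
for every $\dbar$-closed smooth $(0,n-q)$-form $\gamma$ with values in $L^\star-\Delta_1$. We rewrite
\[
\int_X\lambda\wedge\gamma=\int_X \frac{\lambda}{s_{\Delta_1}}\wedge (s_{\Delta_1}\gamma),
\]
so that $\gamma':=s_{\Delta_1}\gamma$ is a $\dbar$-closed $L^\star$-valued $(0,n-q)$-form, smooth in the classical sense and vanishing along $\Delta_1$. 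The dual bundle $(L^\star,h_L^\star)$ has curvature $-\sum a_i[D_i]-\theta$, so we are in the setting of Definition~\ref{forms} with respect to the conic structure along $\Delta_2$. The argument has two parts. First, replace $\gamma'$ by its conic harmonic representative. Second, show that the harmonic representative annihilates the right-hand side of \eqref{relation}.

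\textbf{Step 1 (harmonic representative).} Since $a_i>0$, the classical form $\gamma'$ is smooth in the conic sense for $h_L^\star$. Theorem~\ref{CoHo1} applied to $(L^\star,h_L^\star,\omega_\cC)$ then gives
\[
\gamma'=g+\dbar w
\]
in the conic sense, with $g$ harmonic. Next I apply the conic Bochner--Kodaira identity \cite[Prop.~2.21]{JCMP}, transported to $(n,n-q)$-forms with values in $L$ through the conjugate-linear Hodge star $\sharp$. Because $\theta\ge0$, the curvature term is nonnegative, and harmonicity should force
\[
D'_{h_L}(\sharp g)=0 \qquad\text{and}\qquad \theta\wedge g=0 .
\]
Equivalently, $D'_{h_L^\star}g=0$ and $\theta\wedge g=0$ in the conic sense; this is the twisted analogue of the Enoki--Takegoshi argument. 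With these identities, Proposition~\ref{int} gives
\[
\int_X (D'_{h_L}\beta_1+\theta\wedge\beta_2)\wedge g=\pm\int_X\beta_1\wedge D'_{h_L^\star}g+\int_X\beta_2\wedge\theta\wedge g=0 .
\]
To apply Proposition~\ref{int} here, I run its proof with the cut-offs $\mu_\ep$ of \eqref{cutoff}. The needed input is that $\beta_j\wedge g$ and $\partial\mu_\ep\wedge\beta_j\wedge g$ are integrable, with the latter tending to $0$. Near $\Delta_2$ this uses $a_i\in]0,1[$: after the ramified cover, $g$ vanishes to order $a_im$ in $w_i$, which compensates the logarithmic poles of $\beta_j$, exactly as in the proof of Theorem~\ref{classsol}. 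Near $\Delta_1$, $g$ is smooth and the metric is smooth, so a logarithmic pole paired with $\partial\mu_\ep$ gives an $O(1/\log(1/\ep))$ contribution.

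\textbf{Step 2 (the exact part).} It remains to prove
\[
\int_X\frac{\lambda}{s_{\Delta_1}}\wedge\dbar w=0 .
\]
This is where I expect the main obstacle. A naive integration by parts produces the residue current of $\lambda/s_{\Delta_1}$ along $\Delta_1$ paired with $w|_{\Delta_1}$, which need not vanish. My first attempt is to pass to conic currents. View $\lambda/s_{\Delta_1}$ as an element of $\cC^\infty(X,\Delta_1,L)\subset\mathcal{D}(X,L)$, as in Example~\ref{imporantex}. Its current $\dbar$ is an order-zero current $T_{\Delta_1}$ supported on $\Delta_1$.

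Using \eqref{relation}, this current is $\partial$-closed modulo $\theta\wedge\beta_2$. Applying Theorem~\ref{decompcurrentconic} together with the current version of the Bochner argument from Step~1, I aim to show that
\[
\frac{\lambda}{s_{\Delta_1}}=\dbar T+T'_{\Delta_1}
\]
in $\mathcal{D}(X,L)$, with $T'_{\Delta_1}$ an order-zero current supported on $\Delta_1$. Then Lemma~\ref{reglemma1} produces $v\in\cC^\infty_{(n,q-1)}(X,\Delta_1,L)$ with
\[
\frac{\lambda}{s_{\Delta_1}}=\dbar v \quad\text{on } X\setminus\Delta_1
\]
in the conic sense. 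Setting $\tilde v:=s_{\Delta_1}v$, this form is an $(n,q-1)$-form with values in $L+\Delta_1$, and it extends smoothly across $\Delta_1$ because $v$ has only logarithmic poles in its $dz$-part. Finally, a conic-to-classical argument in the spirit of Theorem~\ref{classsol} should upgrade $\lambda=\dbar\tilde v$ from an identity on $X\setminus D$ to an identity of currents on $X$: the boundary terms along $\Delta_2$ vanish since $a_i<1$, and those along $\Delta_1$ vanish because $\tilde v$ has no pole there. This yields $[\lambda]=0$.

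If controlling $T'_{\Delta_1}$ turns out to be too delicate, the fallback is to choose the decomposition in Step~1 more carefully. The plan would be to solve for $w$ in the $\cC^\infty$ conic space of the bundle $L^\star-\Delta_1$, equipped with a metric whose weight has coefficient $-1$ along $\Delta_1$. The point is that $w$ is then divisible by $s_{\Delta_1}$, so the residue term along $\Delta_1$ disappears by the same divisibility mechanism used at the end of the proof of Theorem~\ref{classsol}.
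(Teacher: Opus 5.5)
There is a genuine gap, and it is the step you yourself flag as the main obstacle. Everything rests on the claim that $S:=\lambda/s_{\Delta_1}$ satisfies $S=\dbar T+T'_{\Delta_1}$ in $\mathcal{D}(X,L)$, with $T'_{\Delta_1}$ of order zero and supported on $\Delta_1$. You give no mechanism for this claim.

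The mechanism you hint at, de~Rham--Kodaira on $X$ plus the Bochner argument, does not produce it by itself. As a current, $\dbar S$ is the sum of the residue currents $[\Res_{E_j}S]$ along the components $E_j$ of $\Delta_1$. Theorem~\ref{decompcurrentconic} only gives $S=\cH(S)+\dbar\dbar^\star\cG(S)+\dbar^\star\cG(\dbar S)$, and the last term is spread over all of $X$. What you need is that $\dbar S$ is $\dbar$ of a current \emph{supported on} $\Delta_1$. That is a statement about the Hodge theory of the strata $E_I$ of $\Delta_1$, not of $X$.

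The paper proves it by descending induction on the strata. Taking residues of \eqref{relation} gives $\Res_I S=D'_{h_L}\Res_I\beta_1+\theta\wedge\Res_I\beta_2$ on $E_I$. On the deepest stratum this is $\dbar$-closed and, by Bochner with $\theta\ge 0$, orthogonal to harmonic forms; hence it equals $\dbar R_I$. On higher strata one subtracts $\sum_j[R_{j,I}]$, whose skew-symmetry makes the difference $\dbar$-closed. One then uses that harmonic forms on $E_I$ restrict to harmonic forms on $E_{j,I}$ (again because $\theta\ge 0$) to keep orthogonality to harmonic forms. At level $0$ this yields $S=\dbar R+\sum_j[R_j]$.

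Your fallback does not replace this. If $\O(-\Delta_1)$ carries a weight with coefficient $-1$ along $\Delta_1$, then multiplication by $s_{\Delta_1}$ is an isometry from $L^\star-\Delta_1$ onto $(L^\star,h_L^\star)$. The Hodge decomposition is therefore just transported, and the resulting $w$ is not divisible by $s_{\Delta_1}$ in any useful sense.

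Two further remarks.

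First, once the claim is available, your Step~1 is unnecessary. The form $\gamma'=s_{\Delta_1}\gamma$ is $\dbar$-closed, conic-smooth for $(L^\star,h_L^\star)$ since $a_i>0$, and vanishes on $\Delta_1$. So directly $\int_X\lambda\wedge\gamma=\int_X S\wedge\gamma'=\pm\int_X T\wedge\dbar\gamma'+\int_X T'_{\Delta_1}\wedge\gamma'=0$. It is your harmonic decomposition of $\gamma'$ that destroys the vanishing along $\Delta_1$ and creates the residue term of Step~2. The paper avoids this by decomposing the $(L+\Delta_1)^\star$-valued form first and multiplying by $s_{\Delta_1}$ afterwards. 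With the claim in hand, your Serre-duality framing, together with Lemma~\ref{reglemma1} and the conic-to-classical upgrade, is a fine substitute for the paper's $L^2$ functional-analytic conclusion.

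Second, the justification in Step~1 has the wrong sign. The coefficients of $h_L^\star$ are $-a_i$, so conic smoothness means $w^{am}\pi^\star g$ is smooth; that is, $g$ may \emph{blow up} like $|w|^{-am}$ rather than vanish. For instance, $|z_1|^{-2a}d\bar z_1\otimes e_{L^\star}$ is conic-smooth and both $\dbar$- and $D'_{h_L^\star}$-closed. Its wedge with $dz_1/z_1$ is not integrable when $a\ge 1/2$. So integrating by parts against $\beta_j$ with log poles along $\Delta_2$ is not justified as written. Also, $D'_{h_L}(\sharp g)=0$ is vacuous for bidegree reasons; what Bochner actually gives is $(D'_{h_L})^\star\sharp g=0$ and $\langle[\theta,\Lambda]\sharp g,\sharp g\rangle=0$.
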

			
			We present here a proof which is slightly different from the original one. In the original proof, the key lemma \cite[Lemma 4.3]{JCMP} is proved by induction together with properties of the Green operator and Hodge decomposition. Here we give a different proof of \cite[Lemma 4.3]{JCMP} by using the $\ddbar$-lemma on every stratum of $\Delta_1$. This is the idea used in \cite[Section 10.2]{CDHP}.
			
			\begin{proof}
				Let $\varphi \in C^\infty_{(0,n-q)}\bigl(X,-L-\Delta_1\bigr)$ be a test form with compact support in $X\setminus Y$.
				Let $\omega_{\cC}$ be a conic metric associated with $(L,h_L)$, and equip $\Delta_1$ with an arbitrary smooth metric $h_1$.
				By Theorem~\ref{CoHo1}, we have a conic Hodge decomposition for $(L+\Delta_1, h_L\cdot h_1,\omega_{\cC})$:
				\begin{equation}\label{van3}
					\varphi=\varphi_1+\varphi_2,
					\qquad \varphi_1\in \ker \dbar,\quad \varphi_2\in (\ker \dbar)^\perp.
				\end{equation}
				
				\medskip
				
				We claim that
				\begin{equation}\label{check1}
					\int_X \lambda\wedge \varphi_1 = 0
				\end{equation}
				and that there exists a constant $C>0$ such that
				\begin{equation}\label{check2}
					\int_X |\varphi_2|^2 \,\omega_{\cC}^n
					\le C \int_X |\dbar \varphi_2|^2\,\omega_{\cC}^n
					= C \int_X |\dbar \varphi|^2\,\omega_{\cC}^n.
				\end{equation}
				Once \eqref{check1} and \eqref{check2} are established, standard functional analytic arguments imply that $[\lambda]=0 \in H^q(X,K_X+L+\Delta_1)$.
				
				\medskip
				
				The estimate \eqref{check2} follows from the closedness of $\im \Delta''$ in the conic Hodge decomposition theorem \ref{CoHo1}. 
				We now explain the proof of \eqref{check1}.
				
				Suppose $\Delta_1=\sum_{i=1}^m E_i$. Set
				\[
				E_{i_1,\dots,i_k}:=E_{i_1}\cap\cdots\cap E_{i_k},
				\]
				and let $\Res_{i_1,\dots,i_k}$ denote the residue on the stratum $E_{i_1,\dots,i_k}$.
				Taking residues in \eqref{relation} yields
				\begin{equation}\label{resiequl}
				\Res_{i_1,\dots,i_k}\!\left(\frac{\lambda}{s_{\Delta_1}}\right)
				= D'_{h_L}\Res_{i_1,\dots,i_k}(\beta_1)
				+ \theta \wedge \Res_{i_1,\dots,i_k}(\beta_2)
				\qquad \text{on } E_{i_1,\dots,i_k},
				\end{equation}
				which is an $L$-valued conic current on $E_{i_1,\dots,i_k}$.
				
				We claim that for every multi-index $(i_1,\dots,i_k)$,
				\begin{equation}\label{clima}
					\Res_{i_1,\dots,i_k}\!\left(\frac{\lambda}{s_{\Delta_1}}\right)
					= \dbar R_{i_1,\dots,i_k} + \sum_j [R_{j,i_1,\dots,i_k}],
				\end{equation}
				where $R_{i_1,\dots,i_k}$ are order-zero currents on $E_{i_1,\dots,i_k}$, orthogonal to the harmonic forms on $E_{i_1,\dots,i_k}$, and satisfying the skew-symmetry relations.  Here, we say that a $L$-valued coninc current $T$ on $E_{i_1,\dots,i_k}$ is orthogonal to the harmonic part, if 
				$$\int_{E_{i_1,\dots,i_k}} T \wedge \xi =0$$
				for every $L^\star$-valued harmonic form $\xi$ on $E_{i_1,\dots,i_k}$.
				
				\medskip
				
				We prove this claim by induction, starting from the deepest stratum. First,
				\[
				\Res_{1,\dots,m}\!\left(\frac{\lambda}{s_{\Delta_1}}\right)
				= D'_{h_L}\Res_{1,\dots,m}\beta_1
				+ \theta \wedge \Res_{1,\dots,m}\beta_2.
				\]
				The left-hand side is $\dbar$-closed. By the Bochner identity, the right-hand side is orthogonal to the harmonic part. Hence, by Theorem~\ref{conicddabr},
				\[
				\Res_{1,\dots,m}\!\left(\frac{\lambda}{s_{\Delta_1}}\right)=\dbar R_{1,\dots,m},
				\]
				for some $R_{1,\dots,m}$ on $E_{1,\dots,m}$ orthogonal to the harmonic part.
				
				Assume by induction that \eqref{clima} holds at level $k$. For level $k-1$,
				\begin{align*}
					\dbar \Res_{i_1,\dots,i_{k-1}}\!\left(\frac{\lambda}{s_{\Delta_1}}\right)
					&= \sum_j \Res_{j,i_1,\dots,i_{k-1}}\!\left(\frac{\lambda}{s_{\Delta_1}}\right)
					= \sum_j [\dbar R_{j,i_1,\dots,i_{k-1}}],
				\end{align*}
				where the last equality uses the skew-symmetry relations. Therefore the conic current
				\begin{equation}\label{orthtwo}
				\Res_{i_1,\dots,i_{k-1}}\!\left(\frac{\lambda}{s_{\Delta_1}}\right)
				- \sum_j [R_{j,i_1,\dots,i_{k-1}}]
				\end{equation}
				is $\dbar$-closed. Moreover, we can show that both two terms in \eqref{orthtwo} are orthogonal to the harmonic forms on $E_{i_1,\dots,i_{k-1}}$ as follows:
				
				 By using \eqref{resiequl}, the first term of \eqref{orthtwo} is orthogonal to the harmonic forms. To treat the second term of \eqref{orthtwo}, we use the following fact. As $\theta \geq 0$, by using Bochner equality, we can show that if $\xi$ is a $L^\star$-valued $\Delta''$-harmonic $(0,q)$ on $E_{i_1,\dots,i_{k-1}}$, the restriction $\xi |_{E_{j,i_1,\dots,i_{k-1}}}$ is still $\Delta''$-harmonic. 
				 
				 Consequently, $\Res_{i_1,\dots,i_{k-1}}\!\left(\frac{\lambda}{s_{\Delta_1}}\right)
				 - \sum_j [R_{j,i_1,\dots,i_{k-1}}]$ is $\dbar$-exact and we can write
				\[
				\Res_{i_1,\dots,i_{k-1}}\!\left(\frac{\lambda}{s_{\Delta_1}}\right)
				= \dbar R_{i_1,\dots,i_{k-1}} + \sum_j [R_{j,i_1,\dots,i_{k-1}}],
				\]
				for some $R_{i_1,\dots,i_{k-1}}$ orthogonal to the harmonic forms; moreover, the skew-symmetry relations can be arranged. This completes the induction.
				
				\medskip
				
				Applying \eqref{clima} with $k=0$, we conclude that $\frac{\lambda}{s_{\Delta_1}}$ is $\dbar$-exact in the conic current sense modulo a order-zero current supported on $\Delta_1$. Therefore,
				\[
				\int_X \lambda\wedge \varphi_1
				= \int_X \frac{\lambda}{s_{\Delta_1}} \wedge \bigl(s_{\Delta_1}\cdot \varphi_1\bigr)
				=0,
				\]
				which proves \eqref{check1}.
			\end{proof}

\section{Applications, I: extension of the pluri-canonical forms}\label{extenpluri}

In the rest of this article, we discuss several applications of the twisted Hodge theory established above. In this section, we work in the following setting.

Let $p:\cX\to \DD$ be a holomorphic family of smooth compact manifolds over the unit disc $\DD$. We assume that the central fiber, denoted by $X$, is K\"ahler. We denote by $K_\cX$ the canonical bundle of $\cX$, and let $\Lie \to \cX$ be an arbitrary holomorphic line bundle.

We are interested in extension properties of global sections of the adjoint bundle $K_\cX+\Lie$ defined over the $k^{\rm th}$ infinitesimal neighborhood of the central fiber of $p$, where $k\ge 1$ is a positive integer. In other words, we consider the sheaves
\begin{equation}\label{intr1}
	\cF_k := (K_\cX+\Lie)\otimes \O_\cX / t^{k+1}\O_\cX,
\end{equation}
and the corresponding spaces of sections $H^0(\cX,\cF_k)$. Here $t$ is the coordinate on $\DD$. The main question is as follows.

\begin{quest}
	Given $s\in H^0(\cX,\cF_k)$, can we extend $s$ to the $(k+1)^{\rm th}$ infinitesimal neighborhood of the central fiber $X$? In other words, does there exist $\wh s\in H^0(\cX,\cF_{k+1})$ such that $\pi_k(\wh s)=s$, where $\pi_k$ is the natural projection
	\begin{equation}\label{intr2}
		\pi_k:\cF_{k+1}\to \cF_k?
	\end{equation}
\end{quest}

\medskip

We first consider the case
\[
\Lie = (m-1)K_\cX,
\]
where $m\ge 1$ is a positive integer. Set $L:=(m-1)K_X$. Then the restriction of $s$ to the central fiber $X$ induces a metric on $L$, denoted by $h_L$. Let $\varphi_L$ be the weight of $h_L$. Then
\[
\varphi_L \sim \frac{m-1}{m}\log |s|_X|^2.
\]

\medskip

\noindent
The following statement is the main result of \cite{CP-inv}.

\begin{thm}\label{MT}\cite[Thm 1.1]{CP-inv}
	Let $s\in H^0(\cX,\cF_k)$ with $\Lie=(m-1)K_\cX$. Assume that $s$ admits a $C^\infty$ extension $s_k$ such that, writing
	\[
	\dbar s_k = t^{k+1}\Lambda_k,
	\]
	the integral
	\begin{equation}\label{intr7}
		\int_X \left|\frac{\Lambda_k}{dt}\right|^2 e^{-(1-\ep)\varphi_L}\, dV <  + \infty
	\end{equation}
	converges for every $\ep>0$.
	Then there exists a section $\wh s\in H^0(\cX,\cF_{k+1})$ such that $\pi_k(\wh s)=s$.
\end{thm}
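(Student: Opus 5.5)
The extension problem reduces to a cohomology vanishing on the central fiber, which I will obtain from Theorem~\ref{ddbar01}. First, since $\dbar s_k = t^{k+1}\Lambda_k$, we have $\dbar\Lambda_k=0$. The restriction $\lambda:=\frac{\Lambda_k}{dt}\big|_X$ is a $\dbar$-closed $(n,1)$-form with values in $L=(m-1)K_X$, i.e.\ with values in $K_X+L=mK_X$. A standard computation shows that $s$ extends to the $(k+1)^{\rm th}$ neighborhood if and only if we can correct $s_k$ by $t^{k+1}\gamma$, with $\gamma$ smooth, so that $\dbar(s_k+t^{k+1}\gamma)=O(t^{k+2})$. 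This amounts to $\lambda=-\dbar(\gamma/dt)|_X$, i.e.
\[
[\lambda]=0\in H^1(X,K_X+L).
\]
So it suffices to prove this vanishing.

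Second, I would reduce to the setting of Theorem~\ref{ddbar01}. Take a log resolution $\mu:\widehat X\to X$ of the zero divisor of $s|_X$, so that $\mu^\star(s|_X)$ has SNC zero divisor $\sum c_iD_i$. Pulling back gives a metric $h_L$ with $i\Theta_{h_L}=\frac{m-1}{m}\sum c_i[D_i]+\theta$, where $\theta\ge0$ comes from the Kähler/semipositive part. I split $D$ into $\Delta_2$, the components whose coefficient $\frac{m-1}{m}c_i$ is fractional, and $\Delta_1$, the integral parts, which are absorbed into $L$ by twisting. After subtracting integer parts, the coefficients lie in $]0,1[$, and the round-down divisor becomes $\Delta_1$ (a log-canonical type adjustment). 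The integrability hypothesis \eqref{intr7}, valid for every $\ep>0$, is exactly what guarantees that $\mu^\star\lambda$ divided by $s_{\Delta_1}$ has at worst logarithmic poles along $D$. Cohomology descends because $\mu_\star K_{\widehat X}=K_X$ and higher direct images vanish.

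Third, the main step, and the one I expect to be the real obstacle, is producing the relation \eqref{relation}: $\lambda/s_{\Delta_1}=D'_{h_L}\beta_1+\theta\wedge\beta_2$ with log-pole forms $\beta_i$. The idea is to differentiate in the family. Write $s_k=\sigma\, dt$ locally and use $\dbar s_k=t^{k+1}\Lambda_k$. Because $s_k$ is holomorphic modulo $t^{k+1}$ and $h_L$ is induced by $s$ itself, the Chern connection $D'_{h_L}$ applied to $\sigma/s^{(m-1)/m}$ gives, roughly, $\partial\log$ of the section. Commuting $\partial$ and $\dbar$ on $\cX$ and then extracting the coefficient of $t^{k+1}$ should express $\lambda$ as $D'_{h_L}$ of a form built from $\partial_t$-derivatives of $s_k$ divided by $s$, plus a curvature term $\theta\wedge(\cdot)$. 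I would try this directly in local coordinates adapted to $D$. The point to check is that the divisions by $s$ only produce logarithmic poles; this uses the multiplicative structure $\varphi_L=\frac{m-1}{m}\log|s|^2$. Once \eqref{relation} holds, Theorem~\ref{ddbar01} gives $[\lambda]=0$, and hence the extension $\wh s$.
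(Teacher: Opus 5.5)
\textbf{Overall structure.} Your architecture matches the paper's sketch. You reduce to $[\lambda]=0$. You resolve so that the support of $\Div(s)$ is an snc divisor $D+E$, with fractional coefficients on $D$ and integral ones on $E$. You use \eqref{intr7} for all $\ep>0$ to view $\lambda$ as an $(F+E)$-valued $(n,1)$-form, with $F\equiv\sum\delta_iD_i$ and $\delta_i\in\,]0,1[$. You conclude with Theorem~\ref{ddbar01}.

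\textbf{First problem: the relation you expect is the wrong one.} The gap is in your third step, which carries the proof. In the paper the differentiation along the family is the Lie derivative $\Lie_\Xi=[D',\Xi]$ along a smooth horizontal lift $\Xi$ of $\partial/\partial t$. Here $D'=\partial-\frac{m-1}{m}\frac{\partial s_k}{s_k}\wedge\cdot$ is built from $s_k$ itself, and $\Lie_\Xi$ is applied $k+1$ times to $\dbar s_k=t^{k+1}\Lambda_k$ to strip off the factor $t^{k+1}$. This does not produce $D'_{h_L}\beta_1+\theta\wedge\beta_2$. It produces
\[
\lambda=\dbar\alpha+D'_{h_L}\beta\qquad\text{on } X\setminus(s=0).
\]
There is no curvature term: the metric induced by $s$ has purely divisorial curvature, so $\theta=0$ and there is no ``K\"ahler/semipositive part''. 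The $\dbar\alpha$ term is harmless, but you must absorb it before Theorem~\ref{ddbar01} applies. For example, replace $\lambda$ by $\lambda-\dbar(s_E\alpha_1)$, once $\alpha_1$ has only logarithmic poles along $E$.

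\textbf{Second problem: the poles are not logarithmic.} This is the genuine gap. In general $\alpha$ and $\beta$ are meromorphic along $s=0$ with poles of \emph{higher} order. Since $s_k$ is not holomorphic, $D'$ has nonzero curvature $[\dbar,D']=-\frac{m-1}{m}\dbar\bigl(\frac{\partial s_k}{s_k}\bigr)$. This already contains $\frac{\dbar s_k\wedge\partial s_k}{s_k^2}$, and these terms enter through $[\Lie_\Xi,\dbar]$. Each further application of $\Lie_\Xi$ raises the pole order. So ``the divisions by $s$ only produce logarithmic poles'' is not something you can verify. It is false, and the multiplicative form of $\varphi_L$ does not rescue it.

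\textbf{The missing step.} You need a separate pole-elimination argument that uses the singularities of $h_F$. Because the $\delta_i$ are not integers, the factor $N+\delta_i$ that appears when $D'_{h_F}$ hits a pole of order $N$ along $D_i$ never vanishes. This is what lets the polar parts of $\alpha,\beta$ be absorbed. Along $E$, where $h_F$ is smooth, you only get down to logarithmic poles. The outcome is $F$-valued forms $\alpha_1,\beta_1$ of types $(n,0)$ and $(n-1,1)$, with logarithmic poles along $E$, such that $\dbar\alpha_1+D'_{h_F}\beta_1=\lambda/s_E$ on $X\setminus E$. Only then does Theorem~\ref{ddbar01} give $[\lambda]=0$, applied with $\Delta_1=E$, $\Delta_2=D$, $L=F$ and $\theta=0$.
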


One of the key ingredients in the proof of Theorem~\ref{MT} is the log vanishing Theorem~\ref{ddbar01}. We now sketch the proof of Theorem~\ref{MT}.

\begin{proof}[Sktech of the proof]
	Using a partition of unity, we can always construct a $C^\infty$ extension $s_k$ such that $\dbar s_k = t^{k+1}\Lambda_k$ for some smooth form $\Lambda_k$.
	Set $\lambda_k := (\Lambda_k/dt)|_X$. Since $\Lambda_k$ is $\dbar$-closed, $\lambda_k$ is also $\dbar$-closed. To prove the theorem, it suffices to show that $\lambda_k$ is $\dbar$-exact.
	
	Note that $s_k$ induces a Lie derivative as follows. The section $s_k$ defines an operator
	\[
	D' : C^\infty_{p,q}(\cX \setminus \Div(s_k),\Lie)\to C^\infty_{p+1,q}(\cX \setminus \Div(s_k),\Lie),
	\]
	locally given on a chart $\Omega$ by
	\[
	D'|_{\Omega} = \partial - \frac{m-1}{m}\frac{\partial s_k}{s_k}\wedge \cdot .
	\]
	We fix a horizontal vector field $\Xi \in C^\infty(\cX,T_\cX)$ such that $p_\ast(\Xi)=\partial/\partial t$. We then define the Lie derivative by
	\[
	\Lie_\Xi := [D',\Xi].
	\]
	Thus,
	\[
	\Lie_\Xi : C^\infty_{n+1,q}(\cX \setminus \Div(s_k),\Lie)\to C^\infty_{n+1,q}(\cX \setminus \Div(s_k),\Lie).
	\]
	
	\medskip
	
	Applying $\Lie_\Xi$ $(k+1)$ times to the equation $\dbar s_k = t^{k+1}\Lambda_k$, one can show that
	\[
	\lambda_k = \dbar \alpha + D'_{h_L}\beta
	\qquad \text{on } X\setminus (s=0),
	\]
	where $\alpha$ and $\beta$ are forms that are meromorphic along $s=0$ (possibly with poles of higher order).
	
	\medskip
	
	After desingularization and using \eqref{intr7}, we may assume that:
	\begin{itemize}
		\item The support of $\Div(s)$ can be written as $D+E$, where $D+E$ is snc, with $D=\sum D_i$ and $E=\sum E_i$.
		\item $\lambda_k$ is an $(F+E)$-valued $(n,1)$-form, where $F\equiv \sum \delta_i D_i$ for $\delta_i \in ]0,1[\cap \mathbb Q$.
	\end{itemize}
	
	\medskip
	
	Let $h_F$ be a metric on $F$ such that
	\[
	\frac{i}{2\pi}\Theta_{h_F}(F)=\sum \delta_i [D_i].
	\]
	Using the singularities of $h_F$, we can eliminate the higher-order poles of $\alpha$ and $\beta$. We obtain the following statement.
	
	There exist forms $\alpha_1$ and $\beta_1$ of types $(n,0)$ and $(n-1,1)$, respectively, with values in $F$ and with logarithmic poles along $E$, such that
	\[
	\dbar \alpha_1 + D'_{h_F}\beta_1 = \frac{\lambda_k}{s_E}
	\qquad \text{on } X\setminus E.
	\]
	By Theorem~\ref{ddbar01}, it follows that $\lambda_k$ is $\dbar$-exact.
\end{proof}

\begin{remark}\label{1orderex}
	For $k=1$, one can always find a smooth extension $s_1$ such that the integrability condition \eqref{intr7} is satisfied; see \cite[Thm 4.1]{CP-inv}.
\end{remark}

\noindent
Theorem~\ref{MT} is also motivated by an important conjecture in K\"ahler geometry, which we now recall. After Y.-T.~Siu's ``invariance of plurigenera'' papers \cite{Siu98,Siu00} (see also \cite{Pau07}), which concern the extension of $s$ in the case where $\cX$ is a \emph{projective} family, the following fundamental problem remains open.

\begin{conjecture}\label{Siuconj}\cite{Siu00}
	Let $p:\cX\to \DD$ be a family of smooth, $n$-dimensional compact manifolds, and let $X$ be the central fiber. Assume that the total space $\cX$ admits a K\"ahler metric. Then any holomorphic pluricanonical section defined on $X$ extends holomorphically to $\cX$.
\end{conjecture}

\smallskip

\noindent
Among the works devoted to Conjecture~\ref{Siuconj}, we first mention \cite{Lev83,Lev85} by M.~Levine. A direct consequence of Theorem~\ref{MT} is the following statement, which can be viewed as a more general version of the results in \cite{Lev83,Lev85}.

\begin{cor}\label{corlev}\cite[Cor 1.5]{CP-inv}
	Let $p:\cX\to \DD$ be a smooth holomorphic family of compact manifolds whose central fiber $X$ is K\"ahler, and let $s\in H^0(X,mK_X)$ be a pluricanonical section on $X$.
	Assume that the zero set of the ideal
	\[
	\mathfrak I := \lim_{\ep\to 0}\mathcal I\!\left((1-\ep)\frac{m-1}{m}\Sigma\right)
	\]
	is discrete, where $\Sigma$ is the divisor corresponding to $s$.
	Then any section $\tau \in H^0(X,mK_X)$ admits a holomorphic extension to a neighborhood of the central fiber $X$.
	If moreover the total space $\cX$ is K\"ahler, then any section $\tau \in H^0(X,mK_X)$ admits a holomorphic extension to $\cX$.
\end{cor}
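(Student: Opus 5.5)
The plan is to apply Theorem~\ref{MT} inductively in $k$, and to use a linear-combination trick to pass from the distinguished section $s$, whose divisor controls the singular metric, to an arbitrary $\tau\in H^0(X,mK_X)$.

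\emph{Step 1: the integrability condition \eqref{intr7} can always be arranged for $s$.} Let $Z$ be the finite zero set of $\mathfrak I$. Suppose $s$ has already been extended to some $\sigma\in H^0(\cX,\cF_k)$. For each $x\in Z$, take a small Stein neighbourhood $W_x\subset\cX$. On $W_x$ the holomorphic section of $\cF_k$ lifts to a holomorphic section of $\cF_{k+1}$, because the relevant $H^1$ vanishes on a Stein set.

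Using a partition of unity, I glue these local holomorphic lifts to an arbitrary smooth lift elsewhere. This gives a $C^\infty$ extension $s_k$ with $\dbar s_k=t^{k+1}\Lambda_k$ and $\Lambda_k\equiv 0$ near $Z$. Away from $Z$, the form $\Lambda_k/dt$ is smooth. By definition of $\mathfrak I$, for every $\ep>0$ the weight $e^{-(1-\ep)\varphi_L}$ is locally integrable on $X\setminus Z$. Hence \eqref{intr7} holds, and Theorem~\ref{MT} extends $\sigma$ to order $k+1$. By induction, $s$ extends to every finite order (the case $k=1$ is also covered by Remark~\ref{1orderex}).

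\emph{Step 2: from $s$ to $\tau$.} Consider $s_c:=s+c\tau$ for $c\in\CC$ small. By the semicontinuity (openness) of multiplier ideals in the sense of Demailly--Koll\'ar, for $|c|$ small the ideal $\mathfrak I_c$ attached to $s_c$ still has discrete zero set. I expect this to be the delicate point of the argument. The ideal $\mathfrak I$ is a limit as $\ep\to 0$, so the argument must be uniform in $\ep$. I would first try to prove this via the strong openness property, writing $\mathfrak I=\mathcal I\bigl((1-\ep_0)\tfrac{m-1}{m}\Sigma\bigr)$ for one fixed small $\ep_0$. I would then use that the corresponding integrals on compact subsets of $X\setminus Z$ vary continuously with $c$.

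Granting this, Step 1 applies equally to $s_c$. In fact, Step 1 only uses the local Stein lifting near a finite set together with local integrability elsewhere, and any finite set containing the zeros of $\mathfrak I_c$ works. We may therefore keep the finite set $Z$ fixed, or enlarge it. We obtain compatible extensions of $s$ and $s_c$ to all finite orders. Since the obstruction space is linear, the difference $c^{-1}(\wh s_c-\wh s)$ gives extensions of $\tau$ to every infinitesimal neighbourhood.

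\emph{Step 3: convergence and global extension.} Formal extensibility of every $\tau$ to all orders means that the restriction map $(p_\ast(mK_\cX))_0\otimes\O/t^{k+1}\to H^0(X,mK_X)$ is surjective for all $k$. Grauert's coherence and comparison theorems (theorem on formal functions) then show that $p_\ast(mK_\cX)$ surjects onto $H^0(X,mK_X)$ at $0$. Thus $\tau$ extends holomorphically to a neighbourhood of $X$, and in particular $h^0(X_t,mK_{X_t})$ is constant near $0$.

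If $\cX$ is K\"ahler, all fibres are K\"ahler. Upper semicontinuity of $h^0$ and local freeness of $p_\ast(mK_\cX)$ near $0$ then show the following: the locus where the base-change map fails to be surjective is a proper analytic subset of $\DD$ avoiding $0$. Following \cite{CP-inv}, $\tau$ then extends across $\cX$, using that $p_\ast(mK_\cX)$ is a torsion-free, hence locally free, sheaf on the disc $\DD$ whose sections restrict to $\tau$.
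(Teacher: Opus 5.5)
Your Step~1 is the right use of Theorem~\ref{MT}. Presumably it is what the survey has in mind when it calls the corollary a direct consequence: finiteness of the zero set $Z$ of $\mathfrak I$ lets you take holomorphic lifts on Stein neighbourhoods of the points of $Z$, so $\Lambda_k\equiv 0$ there. Meanwhile $e^{-(1-\ep)\varphi_L}$ is locally integrable on $X\setminus Z$ for every $\ep>0$. So \eqref{intr7} holds at each stage and $s$ extends compatibly to all orders.

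The passage from formal to convergent extension is also fine once phrased correctly. By the theorem on formal functions, $\varprojlim_k H^0(\cX,\cF_k)$ is the completion of $(p_\star mK_\cX)_0$. Both have the same image in $H^0(X,mK_X)$, because that map factors through reduction mod $t$.

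For the last assertion, your appeal to base-change loci and to \cite{CP-inv} is not an argument, but none is needed. Since $p_\star(mK_\cX)$ is coherent on the Stein disc, Cartan's Theorem~A lets you write a local extension $\sigma$ of $\tau$ as $\sum_j (f_j\circ p)\,g_j$ with $g_j\in H^0(\cX,mK_\cX)$. Then $\sum_j f_j(0)\,g_j$ is a global extension of $\tau$.

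The genuine gap is Step~2. You leave it unproved, and the route you sketch would not prove it. You need some $c\neq 0$ for which the non-integrability locus of $|s+c\tau|^{-2(1-\ep)\frac{m-1}{m}}$ is finite for \emph{every} $\ep>0$.
\begin{itemize}
\item Demailly--Koll\'ar semicontinuity, i.e.\ continuity of the integrals on compact subsets of $X\setminus Z$, gives integrability only for $|c|<c(\ep)$ for each fixed $\ep$. By itself it does not prevent $c(\ep)\to 0$ at points where the log canonical threshold of $\Sigma$ equals $\frac{m-1}{m}$ exactly.
\item Fixing $\ep_0$ by strong openness for $\Sigma$ says nothing about where the ideals attached to $\Sigma_c$ stabilise.
\item Near $Z$ you have no control at all, and ``keeping $Z$ fixed or enlarging it'' does not help. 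Step~1 needs the bad locus of $s_c$ to be \emph{finite}, because the Stein lifting trick only works near finitely many points: a neighbourhood of a positive-dimensional compact subvariety is not Stein.
\end{itemize}

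A clean repair goes as follows. The sections extending to all orders form a linear subspace of $V:=H^0(X,mK_X)$, so it suffices that a Zariski-dense set of $D\in|V|$ satisfies the hypothesis.
\begin{itemize}
\item Let $\mu:Y\to X$ be a log resolution of the base ideal $\mathfrak b$ of $|V|$, with $\mu^{-1}\mathfrak b\cdot\O_Y=\O_Y(-F)$.
\item For general $D$ one has $\mu^\star D=F+D'$, with $D'$ smooth and transverse to $F+\mathrm{Exc}(\mu)$. Hence $\mathcal I(cD)=\mu_\star\O_Y(K_{Y/X}-\lfloor cF\rfloor)=\mathcal I(c\cdot\mathfrak b)$ for all $c<1$ simultaneously.
\item Moreover $\mathcal I(c\cdot\mathfrak b)\supseteq\mathcal I(c\Sigma)$, because $\O_X(-\Sigma)\subseteq\mathfrak b$.
\end{itemize}
So the zero set of $\mathfrak I_D$ is contained in $Z$, Step~1 applies to $D$, and linearity gives every $\tau$. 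Alternatively, you can apply the restriction theorem to the divisor $\{s+c\tau=0\}\subset X\times\DD_c$, followed by generic restriction and upper semicontinuity of fibre dimension.
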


\begin{remark}\cite{CP-inv}
	Thanks to Remark~\ref{1orderex} and Theorem~\ref{MT}, we can always extend $s$ to the first-order infinitesimal neighborhood of $X$.
\end{remark}

\section{Applications, II: cohomology jumping loci property. }\label{jumping}

In this section, we discuss another type of extension problem: the base manifold is fixed while the line bundle varies. In this setting, the slogan is that extension to first order implies extension to a genuine neighborhood. To begin, we recall a classical result.

Let $X$ be a compact K\"ahler manifold with a holomorphic line bundle $(L,\dbar_L)$, and let $\alpha$ be a smooth $\dbar$-closed $(0,1)$-form on $X$. For each $t \in (\CC,0)$, consider the twisted $\dbar$-operator
\begin{equation}\label{intr1}
	\dbar_t := \dbar_L + t\alpha
\end{equation}
acting on the space of smooth $L$-valued $(p,q)$-forms on $X$.
A natural problem is to study how the finite-dimensional spaces
\begin{equation}\label{intr22}
	\ker(\dbar_t)\big/\im(\dbar_t)
\end{equation}
vary with $t$. General results (see \cite{Kod86}) ensure that the dimension of the spaces in \eqref{intr22} defines an upper semicontinuous function of $t$.

A starting point for results of this type is due to Green--Lazarsfeld \cite{GL87,GL91}.
Let $u \in \ker \dbar_L$. We say that $u$ \emph{extends to first order in the direction $\alpha$} if the wedge product $u\wedge \alpha$ is $\dbar$-exact, i.e.\ if we can solve
\begin{equation}\label{intr3}
	\dbar v = u\wedge \alpha.
\end{equation}
Following \cite[\S 1]{GL87}, we have the following extension property.

\begin{thm}\label{extencla}
	Let $X$ be a compact K\"ahler manifold and let $(L,\dbar_L)$ be the trivial line bundle.
	Let $\alpha$ be a smooth $\dbar$-closed $(0,1)$-form on $X$.
	Let $u$ be a smooth $\dbar$-closed $(p,q)$-form on $X$ which extends to first order in the direction $\alpha$.
	Then there exists a smooth family $(u_t)_{t\in (\CC,0)}$ of $(p,q)$-forms such that
	\begin{equation}\label{intr4}
		u_0 = u, \qquad u_t \in \ker \dbar_t.
	\end{equation}
	In other words, extension to first order implies extension to a genuine neighborhood.
\end{thm}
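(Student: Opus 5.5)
We construct $u_t=\sum_{k\ge 0} t^k u_k$ as a convergent power series. With $u_0=u$, the condition $\dbar_t u_t=0$ becomes the recursive system
\[
\dbar u_0=0,\qquad \dbar u_{k+1} = -\,\alpha\wedge u_k \quad (k\ge 0),
\]
up to the sign convention for $u\wedge\alpha$. The first-order hypothesis \eqref{intr3} provides $u_1$. The plan is to produce all the $u_k$ inductively by the $\ddbar$-lemma, with norm control so that the series converges for $|t|$ small.

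The first step is a Hodge-theoretic normalisation of $\alpha$ and $u$. Since $\alpha$ is a $\dbar$-closed $(0,1)$-form, we can write $\alpha=\alpha_h+\dbar f$, where $\alpha_h$ is harmonic, hence $\bar\gamma$ for a holomorphic $1$-form $\gamma$, and thus $d$-closed. Replacing $u_t$ by $e^{-tf}u_t$ conjugates $\dbar_t$ to $\dbar+t\alpha_h$, so we may assume $\alpha$ harmonic, $\partial\alpha=0=\dbar\alpha$. Similarly, $u$ differs from its harmonic part by a $\dbar$-exact term $\dbar w$, and $\dbar w$ extends trivially as $\dbar_t(e^{-t\,\cdot}\ldots)$; more simply, $\dbar w=\dbar_t w - t\alpha\wedge w$ is absorbed into the correction terms. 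So we may take $u$ harmonic, hence $d$-closed and $\partial^\star$-closed.

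The induction is the key step. Suppose $u_0,\dots,u_k$ have been constructed with $\partial u_j=0$ and $\dbar u_{j}=-\alpha\wedge u_{j-1}$. The form $\alpha\wedge u_k$ is $\dbar$-closed, since $\dbar(\alpha\wedge u_k)=-\alpha\wedge\dbar u_k=\alpha\wedge\alpha\wedge u_{k-1}=0$, and it is $\partial$-closed because $\partial\alpha=0$ and $\partial u_k=0$. To apply the $\ddbar$-lemma we need exactness. For $k=0$ this is the hypothesis. For $k\ge1$ we choose $u_k=\partial\beta_k$; this is arranged by solving via the $\ddbar$-lemma in the form $u_k=\partial\dbar^\star G(\cdots)$, namely $u_k=\partial\eta_k$ where $\dbar\partial\eta_k=-\alpha\wedge u_{k-1}$. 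Then $\alpha\wedge u_k=-\partial(\alpha\wedge\eta_k)$ is $\partial$-exact and $\dbar$-closed, so the $\ddbar$-lemma gives $\alpha\wedge u_k=\dbar\partial\eta_{k+1}$, and we set $u_{k+1}=-\partial\eta_{k+1}$, which is again $\partial$-exact. For $k=0$, the $\partial$-closed and $\dbar$-exact form $\alpha\wedge u$ is $\ddbar$-exact by the classical $\ddbar$-lemma (cf.\ Theorem~\ref{ddbarcurrent}). This yields $u_1=\partial\eta_1$, after which the induction runs.

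The step I expect to be the main obstacle is convergence, together with choosing the $\ddbar$-primitives canonically. We take $\eta_{k+1}=-G\,\partial^\star\dbar^\star G(\alpha\wedge u_k)$ (Green operator formulas), which is orthogonal to the harmonic part. Elliptic estimates in a Hölder norm $C^{\ell,a}$ then give $\|u_{k+1}\|_{C^{\ell,a}}\le C\|\alpha\|_{C^{\ell,a}}\|u_k\|_{C^{\ell,a}}$ with $C$ independent of $k$, because $\partial G\partial^\star\dbar^\star G$ is an order-zero pseudodifferential operator bounded on Hölder spaces. Hence $\|u_k\|\le (C\|\alpha\|)^k\|u\|$, and the series converges for $|t|<(C\|\alpha\|)^{-1}$ to a smooth family with $u_0=u$ and $\dbar_t u_t=0$. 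Finally, undoing the conjugation by $e^{-tf}$ and adding back the $\dbar w$ contributions gives the family required by \eqref{intr4}.
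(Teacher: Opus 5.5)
Your proposal follows the paper's proof: you pass to a harmonic representative, solve $\dbar u_{k+1}=-\alpha\wedge u_k$ inductively by the $\ddbar$-lemma while keeping every $u_k$ $\partial$-exact, and take canonical Green-operator solutions to get convergence; your gauge change by $e^{-tf}$ also makes explicit the condition $\partial\alpha=0$, which the paper uses tacitly. One small repair is needed in the convergence step, because your H\"older constant depends on $\ell$: to get a smooth family on a fixed disc, either use that $\partial G\partial^\star\dbar^\star G$ actually has order $-1$ (bootstrapping one derivative per step then gives a radius independent of $\ell$), or note that $\dbar^\star u_k=0$ for all $k$, so the limit solves the elliptic system $(\dbar+\dbar^\star)u_t=-t\,\alpha\wedge u_t$.
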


\begin{proof}
	We aim to solve the following system:
	\begin{equation}\label{simp46}
		\begin{cases}
			\dbar(u_1) + \alpha \wedge u = 0,\\
			\dbar(u_2) + \alpha \wedge u_1 = 0,\\
			\ldots\\
			\dbar(u_{i+1}) + \alpha \wedge u_i = 0,\\
			\ldots
		\end{cases}
	\end{equation}
	Note that $u$ extends to first order along $\alpha$ if and only if the first equation in \eqref{simp46} can be solved.
	Similarly, $u$ extends to arbitrary order if and only if all equations in \eqref{simp46} can be solved.
	
	Let $u_0$ be a harmonic representative of $u$. Since the first equation in \eqref{simp46} is solvable by assumption, we have
	\[
	\alpha \wedge u_0 \in \im \dbar \cap \ker \partial.
	\]
	By the $\partial\dbar$-lemma, there exists $u_1 \in \im \partial$ such that
	\[
	\dbar(u_1) + \alpha \wedge u_0 = 0.
	\]
	
	To solve the second equation, note that since $u_1 \in \im \partial$, we have $\alpha \wedge u_1 \in \im \partial$. Moreover,
	\[
	\dbar(\alpha \wedge u_1) = - \alpha \wedge \dbar u_1
	= \alpha \wedge \alpha \wedge u_0 = 0.
	\]
	Hence $\alpha \wedge u_1 \in \im \partial \cap \ker \dbar$.
	By the $\partial\dbar$-lemma again, there exists $u_2 \in \im \partial$ such that
	\[
	\dbar(u_2) + \alpha \wedge u_1 = 0.
	\]
	The same argument applies inductively to the remaining equations.
	
	Finally, if each $u_i$ is chosen as the minimal solution of \eqref{simp46}\footnote{It is then automatically $\partial$-exact by the Hodge decomposition theorem.},
	one can prove convergence of the series $\sum_{i=0}^\infty z^i u_i$ for $|z|\ll 1$.
	In particular, $u$ extends to a genuine neighborhood.
\end{proof}

\noindent
As an application of Theorem~\ref{extencla}, we have the following jumping locus property.

\begin{thm}\cite{GL87, GL91}\label{gl}
	Let $X$ be a compact K\"ahler manifold. Define the jumping locus
	\[
	\Sigma^{p,q}_k(X)
	:= \{ L \in \Pic^0(X) \mid \dim H^{q}(X,\Omega_X^{p}\otimes L) \ge k \}.
	\]
	Then $\Sigma^{p,q}_k(X)$ is a finite union of translates of subtori in $\Pic^0(X)$.
\end{thm}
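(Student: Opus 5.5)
The plan is the Green--Lazarsfeld strategy: prove a \emph{linearity} statement for the deformation theory, then use analyticity. Identify $\Pic^0(X)$ with $H^1(X,\O_X)/H^1(X,\mathbb Z)$. A flat line bundle $L_0\in\Pic^0(X)$ carries a flat unitary metric. Every nearby bundle is then $(L_0,\dbar+t\alpha)$, with $\alpha$ a harmonic $(0,1)$-form, i.e.\ $\bar\alpha$ is a holomorphic $1$-form and $\partial\alpha=0$.

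The key claim is as follows. Let $Z$ be an irreducible component of $\Sigma^{p,q}_k(X)$ and let $L_0\in Z$ be a general point. Then $Z$ contains the germ of the linear translate $L_0+\exp(T_{L_0}Z)$. Granting this for all such $L_0$, each component coincides near a general point with a translate of a linear subspace of $H^1(X,\O_X)$. The Zariski closure of such a germ is the translate of the closure of the image of that subspace, which is a subtorus. Since $\Sigma^{p,q}_k(X)$ is an analytic subvariety of the compact torus, it has finitely many components, so it is a finite union of translated subtori.

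To prove the claim I would extend Theorem~\ref{extencla} from the trivial bundle to a unitary flat $L_0$. The $\ddbar$-lemma holds for unitary flat bundles; this is the case $\theta=0$ with no divisor in Corollary~\ref{conicddabr}, or Simpson's $D'_K D''_K$-lemma. Replacing $\partial$ by $D'_{h}$, the induction in the proof of Theorem~\ref{extencla} goes through verbatim. Take $u_0$ harmonic, so that $D'_h u_0=0$. Since $\partial\alpha=0$, the form $\alpha\wedge u_i$ lies in $\im D'_h\cap\ker\dbar$. Each step is solved with $u_{i+1}\in\im D'_h$, and convergence of the series follows from minimal-norm estimates.

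One obtains the following consequence. For $\alpha$ in the tangent cone, every class that extends to first order extends to a genuine holomorphic family along the straight line $t\mapsto L_0+t\alpha$. To pass from this to the dimension count, I would use the derivative complex
\[
H^{q-1}(X,\Omega^p\otimes L_0)\xrightarrow{\ \alpha\ }H^{q}(X,\Omega^p\otimes L_0)\xrightarrow{\ \alpha\ }H^{q+1}(X,\Omega^p\otimes L_0).
\]
The same $\ddbar$ argument shows that the Kuranishi-type spectral sequence computing the cohomology along the line $L_0+t\alpha$ degenerates at $E_2$ (all higher obstructions vanish). Hence for small $t\ne0$, $\dim H^q(X,\Omega^p\otimes(L_0+t\alpha))$ equals the dimension of the cohomology of this complex. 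This number depends only on the linear condition on $\alpha$. Therefore the locus near $L_0$ where the dimension is $\ge k$ is the germ of a cone, defined by rank conditions on the linear maps $\alpha\wedge\cdot$. Because the germ of $\Sigma^{p,q}_k$ at $L_0$ is then analytically isomorphic to its tangent cone along lines through $L_0$, and is smooth at a general point, it is linear there.

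The main obstacle is the degeneration step: showing that the higher-order obstructions vanish simultaneously for a whole basis of cohomology, not merely for a single class $u$, and with control uniform in $t$. I would handle it by running the induction of Theorem~\ref{extencla} on harmonic representatives of the kernel of $\alpha$. The preceding terms are chosen in $\im D'_h$, so every $\alpha\wedge u_i$ is $D'_h$-exact and $\dbar$-closed, and the $\ddbar$-lemma kills each obstruction class. Semicontinuity \cite{Kod86} then supplies the reverse inequality.
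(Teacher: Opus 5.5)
Your twisted extension step (Theorem~\ref{extencla} for a unitary flat $L_0$, with $D'_h$ in place of $\partial$) and your line-by-line count are fine. For each fixed $\alpha$ you obtain $\dim H^q(X,\Omega^p_X\otimes(L_0+t\alpha))=h(\alpha)$ for $0<|t|<\epsilon(\alpha)$, where $h(\alpha)$ is the middle cohomology of the derivative complex. The gap is the next inference: that the germ of $\Sigma^{p,q}_k(X)$ at $L_0$ is therefore the cone $L_0+C$, with $C=\{\alpha : h(\alpha)\ge k\}$. Line-wise information, with a radius $\epsilon(\alpha)$ that is not controlled as $\alpha$ varies, says nothing about points of $\Sigma^{p,q}_k(X)$ approaching $L_0$ tangentially to a line. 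In exponential coordinates, a germ such as $\{y=x^2\}$ is consistent with every one of your line-wise statements with $C=\{0\}$: each line through the origin meets it only at the origin, in a neighbourhood that shrinks with the direction. Yet its tangent line $\{y=0\}$ is not in $C$. So nothing in your argument shows that a tangent vector $\alpha\in T_{L_0}Z$ lies in $C$, and that is exactly your key claim. The sentence ``the germ is analytically isomorphic to its tangent cone along lines'' is Green--Lazarsfeld's multi-parameter theorem. Proving it requires running the $\ddbar$/formality argument over a whole neighbourhood of $0$ in $H^1(X,\O_X)$, giving a quasi-isomorphism of the family complex with the linear complex of harmonic forms and differential $\sum_j t_j\alpha_j\wedge$. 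It cannot be done one line at a time.

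The missing idea is to use curves lying \emph{inside} $Z$, which is what the paper does in the analogous setting (Corollary~\ref{nearbygeneric}). Take $L_0$ general in $Z$, so that every $\dim H^j(X,\Omega^p_X\otimes L)$ is locally constant for $L\in Z$ near $L_0$. Take a holomorphic disc $\gamma\subset Z$ with $\gamma(0)=L_0$ and $\gamma'(0)=\alpha$. By base change every class in $H^\bullet(X,\Omega^p_X\otimes L_0)$ extends along $\gamma$, hence extends to first order in the direction $\alpha$. The extension theorem then puts the straight line $L_0+t\alpha$ inside $\Sigma^{p,q}_k(X)$. Equivalently, in your language: the rank inequality behind your upper bound holds along any curve with tangent $\alpha$, so $k\le \dim H^q(X,\Omega^p_X\otimes\gamma(t))\le h(\alpha)$, which gives $T_{L_0}Z\subset C$.

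You must then still pass from individual line germs to the full linear germ $L_0+\exp(T_{L_0}Z)$. One way is to note that the relevant analytic conditions vanish on a germ of every line through $0$ in $T_{L_0}Z$; another is a Baire argument combined with irreducibility. Finally, the closure of $\exp(V)$ is a priori only a real subtorus. It is the inclusion $L_0+\exp(V)\subset Z$, together with $\dim_{\mathbb C}Z=\dim_{\mathbb C}V$, that forces it to be a complex subtorus.
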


Theorem~\ref{gl} has been generalized in several directions. In the compact K\"ahler setting, \cite{Sim93} shows moreover that $\Sigma^{p,q}_k(X)$ is in fact a finite union of torsion translates of subtori in $\Pic^0(X)$. More recently, \cite{BW15,BW20} generalize Theorem~\ref{gl} to the quasi-compact K\"ahler case. Their proof relies on global considerations involving moduli spaces and the Riemann--Hilbert correspondence (with some ideas tracing back to \cite{Sim93}).

In \cite{CDHP}, using Hodge theory for rank-one local systems on quasi-compact K\"ahler manifolds, we give a new proof of the jumping locus property in the quasi-compact case. The idea is to use the $\ddbar$-lemma as in Theorem~\ref{extencla}. Thus our proof is self-contained and differs substantially from the original arguments in \cite{BW15,BW20}. More precisely, we have the following result.

\begin{thm}\cite{BW20,CDHP}\label{jmp}
	Let $X$ be a compact K\"ahler manifold and let $D=\sum_{i=1}^{k} D_i$ be a simple normal crossing divisor. Then the jumping locus $\Sigma_k^i(X\setminus D)_{\rm B}$ is a finite union of translates of subtori.
\end{thm}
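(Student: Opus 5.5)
The plan is to follow the Green--Lazarsfeld strategy of Theorem~\ref{extencla}, now carried out with the quasi-compact Hodge theory of Section~\ref{logflat}. First, it is enough to work on a single connected component of $M_{\rm B}(X\setminus D)$ and lift it to $M_{DR}(X/D)$ through $\rho$. By property (4) of the decomposition \eqref{decomp}, we may normalise so that $\Rel \Res_{D_i}(L,\nabla)\in[0,1[$. Deligne's Theorem~\ref{Deli} then identifies $H^i(X\setminus D,\tau)$ with $\mathbb H^i(X,L\otimes\Omega^\bullet_X(\log D))$. On a fixed component the residues along $\Delta_1+\Delta_3$ are constant (property (3)), so all deformations inside the component have the form $\nabla_t=\nabla+t\beta$ with $\beta\in H^0(X,\Omega^1_X(\log \Delta_2))$, together with deformations of $L$ in $\Pic^0$, encoded by a $\dbar$-closed $(0,1)$-form. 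Taken together, the tangent directions are $D_K$-closed $1$-forms $\dot\alpha=\beta+\bar\gamma$, and by Remark~\ref{realsmooth} wedging with $\dot\alpha$ preserves $\cC^\infty_K(X,L)$. Theorem~\ref{quasiiso}, with the harmonic metric of Lemma~\ref{choicemetric}, computes the cohomology of $(L,\nabla_t)$ as the $D_K+t\dot\alpha$-cohomology of $\cC^\infty_{\bullet,K}(X,\Delta_1,L)$.

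The core step is the analogue of Theorem~\ref{extencla}. We take $u$ to be $D_K$-closed, and, in the case $\Delta_1=0$, harmonic by Theorem~\ref{L2cohoisom}, so that $u\in\ker D_K\cap\ker D^c_K$. We suppose $\dot\alpha\wedge u$ is $D_K$-exact. Since $\dot\alpha$ is $D_K$- and $D^c_K$-closed (Corollary~\ref{Noglemma} gives $\partial\beta=0$), the form $\dot\alpha\wedge u$ lies in $\im D_K\cap\ker D^c_K$. Corollary~\ref{ddbarK} then yields $u_1=D^c_K v_1$ with $D_K u_1+\dot\alpha\wedge u=0$. Inductively $\dot\alpha\wedge u_i\in \im D^c_K\cap\ker D_K$, and the same lemma (with the roles of $D_K,D^c_K$ exchanged) solves the whole system \eqref{simp46}. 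We choose minimal solutions, use the closedness of the image in Theorem~\ref{smoothhodge} to get a uniform bound $\|u_{i+1}\|\le C\|u_i\|$ in the weighted Sobolev norms of Definition~\ref{smoothK}, and conclude that $\sum t^iu_i$ converges. When $\Delta_1\neq0$ the forms live in $\cC^\infty_K(X,\Delta_1,L)$, where no such lemma is available directly. In that case we pass to currents: we apply Theorem~\ref{ddbarcurrentconicK} in $\mathcal D_K(X,L)$ and, at each step, return to log-smooth forms by the regularity Lemma~\ref{regularprop}, since $\mathcal D_K$ and $D_K$ differ only by order-zero currents supported on $\Delta_1$.

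With extension along lines established, the statement follows. Extension to first order in the direction $\dot\alpha$ means that the rank of the cohomology is at least $k$ along the whole line $\tau+t\dot\alpha$, which is exactly Theorem~\ref{introcomp}. Upper semicontinuity makes $\Sigma^i_k$ an analytic subset, and germs of analytic curves inside it have tangent lines contained in it. A standard argument, as in \cite{GL87, GL91}, then shows that each irreducible component is invariant under translation by its tangent space at a general point, hence is a translate of a subtorus; there are finitely many components by analyticity and compactness of the relevant moduli, after reducing modulo the action of $\mathbb Z^m=\ker\rho$. I expect the main obstacle to be the convergence and the log-pole case. Specifically, one must show that the solutions produced through the current $\ddbar$-lemma and Lemma~\ref{regularprop} still satisfy uniform estimates, so that the power series converges in $\cC^\infty_K(X,\Delta_1,L)$ rather than only formally.
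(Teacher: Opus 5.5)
\textbf{There is a genuine gap, in the case $\Delta_1\neq 0$.} Your treatment of the case $\Delta_1=0$ is the paper's argument: a harmonic representative, Corollary~\ref{ddbarK} used alternately with $D_K$ and $D^c_K$, and $D^c_K\dot\alpha=0$ via Corollary~\ref{Noglemma}.

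When $\Delta_1\neq 0$, you cannot apply Theorem~\ref{ddbarcurrentconicK} directly to $\dot\alpha\wedge u_i$. By property (3), $\dot\alpha$ is smooth across $\Delta_1$, while $u_i$ has log poles there. So the current $\mathcal{D}_K(\dot\alpha\wedge u_i)$ is not zero: it is a residue current carried by the strata $Y_I$, with pieces $\iota_I^*\dot\alpha\wedge \Res_{Y_I}u_i$. Similarly, $u_0$ is neither $\mathcal{D}_K$- nor $\mathcal{D}^c_K$-closed as a current.

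The order-zero currents on $\Delta_1$ by which $\mathcal{D}_K$ and $D_K$ differ are therefore the obstruction, not the remedy. Lemma~\ref{regularprop} only converts an identity $u=\mathcal{D}_K T+T_{\Delta_1}$ into a log-smooth solution; it does not produce $T$.

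To produce $T$, the paper subtracts currents $[R_I]$ pushed forward from the strata. These are chosen by descending induction on $|I|$, so that the corrected current becomes $\mathcal{D}_K$-closed and $\mathcal{D}^c_K$-exact. At each level this requires solving a first-order extension problem on $Y_I$ for the restricted system. That is exactly hypothesis (2) of Theorem~\ref{solveequs}: every $D_K$-closed form on every $Y_I$ extends to first order in the direction $\iota_I^*\dot\alpha$. Your proposal never formulates this condition, and nothing in it supplies it.

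\textbf{The deduction must be made at generic points.} The paper only reaches the conclusion at a generic point of $\Sigma^i_k(X/D)_{\rm DR}$ (Corollary~\ref{nearbygeneric}). There the relevant cohomology dimensions, on $X$ and on all strata $Y_I$, are locally constant along $\gamma$. Hence every class extends to first order along $\gamma'(0)$. This gives both hypothesis (1) for $k$ independent classes and hypothesis (2).

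Your last paragraph asserts the tangent-line property at every point of $\Sigma$, and takes for granted that $k$ independent classes extend to first order. You should restrict to generic points. This suffices for the subtorus argument, which is then transported to $M_{\rm B}$ by Theorem~\ref{Deli} as in Theorem~\ref{complete2}.

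\textbf{Two smaller points.}
\begin{enumerate}
\item Multiplication by $\beta$ is not bounded on the weighted spaces of Definition~\ref{smoothK}. Near $\Delta_2$, $|\beta|_{\omega_D}$ grows like $|\log|s_i||$, so each step costs one weight, and $\|u_{i+1}\|\le C\|u_i\|$ does not follow from closed range alone. What works is \eqref{mainineq}. Since $\theta_0$ has nonzero residues on $\Delta_2$, the integral $\int|\theta_0|^2|u|^2$ dominates $\|\dot\alpha\wedge u\|^2$. For minimal solutions, where $D_K^\star u_{i+1}=0$, this gives $\|\dot\alpha\wedge u_{i+1}\|\le C\|\dot\alpha\wedge u_i\|$.
\item Finiteness of the number of components comes from $\Sigma$ being Zariski closed in the non-compact algebraic group $M_{\rm B}$, not from compactness.
\end{enumerate}
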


\begin{remark}
	Conjecturally, $\Sigma_k^i(X\setminus D)_{\rm B}$ is a finite union of torsion translates of subtori. This is proved in the quasi-projective case in \cite{BW15}.
\end{remark}

We now briefly outline the main idea of the proof of Theorem~\ref{jmp}. First, by analogy with the extension theorem (Theorem~\ref{extencla}), we prove an extension theorem for logarithmic holomorphic flat line bundles.

\begin{thm}\label{solveequs}
	In the setting of Section~\ref{hodgeloc}, set $\Delta_1 := \sum_{i=1}^{\ell} Y_i$.
	Let $(L,\nabla) \in M_{DR}(X/D)$ and let
	\[
	(\xi,\beta)\in \mathcal{H}^{0,1} \oplus H^0\bigl(X,\Omega_X^1(\log D)\bigr)
	= (T_{M_{DR}(X/D)})_{(L,\nabla)}.
	\]
	Let $u_0 \in \cC^\infty_{\bullet,K}(X,\Delta_1,L)$ such that
	\[
	D_K u_0 = 0 \qquad \text{on } X\setminus D.
	\]
	Suppose that the following hold.
	\begin{itemize}
		\item (1) The form $u_0$ extends to first order, i.e.\ there exists $v_1\in \cC^\infty_{\bullet,K}(X,\Delta_1,L)$ such that
		\[
		D_K v_1 = -(\xi+\beta)\wedge u_0 \qquad \text{on } X\setminus D.
		\]
		\item (2) For every non-empty $I\subset \{1,\ldots,\ell\}$, any $D_K$-closed form in $\cC^\infty_{\bullet-|I|,K}(Y_I,L)$ extends to first order; that is, for any $s\in \cC^\infty_{\bullet-|I|,K}(Y_I,L)$, if $D_K s = 0$, then $-\iota_I^*(\xi+\beta)\wedge s$ is $D_K$-exact on $Y_I$.
		Here $Y_I := \cap_{i\in I} Y_i$ and $\iota_I:Y_I\hookrightarrow X$ is the inclusion map.
	\end{itemize}
	Then for any $k\ge 0$ we can solve
	\begin{equation}\label{simp4611}
		\begin{cases}
			D_K(u_1)= -(\xi+\beta)\wedge u_0,\\
			D_K(u_2)= -(\xi+\beta)\wedge u_1,\\
			\ldots\\
			D_K(u_{k+1})= -(\xi+\beta)\wedge u_k,
		\end{cases}
	\end{equation}
	on $X\setminus D$, with $u_i \in \cC^\infty_{\bullet,K}(X,\Delta_1,L)\cap \im D_K$, and such that the series $\sum_{i\ge 0} t^i u_i$ converges for $|t|\ll 1$.
\end{thm}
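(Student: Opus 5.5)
We mimic the proof of Theorem~\ref{extencla}: the $\partial\dbar$-lemma is replaced by the $\mathcal{D}_K\mathcal{D}_K^c$-lemma for currents (Theorem~\ref{ddbarcurrentconicK}), followed by the regularity Lemma~\ref{regularprop}. The equations \eqref{simp4611} are solved inductively, keeping each $u_i$ ($i\ge 1$) in $\im D_K$ and, in the current sense, in $\im \mathcal{D}^c_K$.

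First we normalise the data. By Corollary~\ref{Noglemma} and Remark~\ref{realsmooth}, $\beta$ is $\partial$-closed with log poles only along $\Delta_2$. We take $\xi$ to be the harmonic $(0,1)$-representative. Then wedging with $\xi+\beta$ preserves $\cC^\infty_K(X,\Delta_1,L)$, and it (anti)commutes with $D_K$ and with $D_K^c$ up to sign. Moreover $(\xi+\beta)\wedge(\xi+\beta)=0$.

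Suppose $u_1,\dots,u_k$ have been constructed with $u_i=D_K w_i$ for $i\ge1$. We view $u_i$ as elements of $\mathcal{D}_K(X,L)$. The current
\[
T_k:=(\xi+\beta)\wedge u_k
\]
is the candidate right-hand side, and we check three properties.

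\emph{Closedness modulo $\Delta_1$.} Since $D_K u_k=-(\xi+\beta)\wedge u_{k-1}$, we get $\mathcal{D}_K T_k = \pm(\xi+\beta)\wedge(\xi+\beta)\wedge u_{k-1}=0$, up to an order-zero current supported on $\Delta_1$. This current comes from the residues of $u_k$ along the strata $Y_I$.

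\emph{$D_K^c$-exactness.} By the analogue of the footnote in Theorem~\ref{extencla}, the minimal solutions are $D_K^c$-exact. So if $u_k=\mathcal{D}^c_K S_k$, then $T_k=\pm\mathcal{D}^c_K\bigl((\xi+\beta)\wedge S_k\bigr)$.

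\emph{Killing the boundary terms.} To apply Theorem~\ref{ddbarcurrentconicK} we need $T_k$ to be genuinely $\mathcal{D}_K$-closed as a current. This forces us to absorb the boundary terms on $\Delta_1$. For this we use hypothesis (2) together with the stratified induction already used in the proof of Theorem~\ref{ddbar01}. Starting from the deepest strata $Y_I$, the residue $\Res_{Y_I}u_k$ is a $D_K$-closed form in $\cC^\infty_{\bullet-|I|,K}(Y_I,L)$. Hypothesis (2) says that $(\xi+\beta)\wedge\Res_{Y_I}u_k$ is $D_K$-exact on $Y_I$. Hence the corresponding boundary current can be written as $\mathcal{D}_K$ of a current supported on $Y_I$, modulo deeper boundary terms. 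Going up the strata, we arrive at
\[
T_k=\mathcal{D}_K R_k + T_{\Delta_1}
\]
with $T_{\Delta_1}$ of order zero. For $k=0$ this step is exactly hypothesis (1).

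Combining these, the $\mathcal{D}_K\mathcal{D}_K^c$-lemma (Theorem~\ref{ddbarcurrentconicK}) on the corrected closed part gives
\[
T_k=\mathcal{D}_K\mathcal{D}^c_K S + (\text{order-zero current on }\Delta_1).
\]
Lemma~\ref{regularprop} then produces $u_{k+1}\in\cC^\infty_K(X,\Delta_1,L)$ with $D_K u_{k+1}=-T_k$ on $X\setminus D$. We choose $u_{k+1}$ of minimal norm, i.e.\ orthogonal to $\ker D_K$ via Theorem~\ref{smoothhodge} applied to the components $f_I$. This keeps it in $\im D_K^c$ and in $\im D_K$, so the induction propagates.

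Convergence is the remaining issue. As in Theorem~\ref{extencla}, we want an estimate
\[
\|u_{k+1}\|\le C\|u_k\|
\]
in a fixed norm, namely finitely many of the weighted Sobolev seminorms defining $\cC^\infty_K$, applied to each coefficient $f_I$. This should follow from the closed-range property in Theorem~\ref{smoothhodge}, which makes the Green operator bounded, together with the fact that wedging with $\xi+\beta$ is bounded on $\cC^\infty_K$ (Remark~\ref{realsmooth}). The series $\sum t^iu_i$ then converges geometrically for $|t|<1/C$.

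The main obstacle is the third property: making the boundary terms on the strata of $\Delta_1$ disappear uniformly in $k$. Hypothesis (2) must be applied at every step and every stratum, with solutions chosen minimal so that the constant $C$ does not grow with $k$. I would first try to set up the stratified argument of Theorem~\ref{ddbar01} as a single bounded linear operator acting on all strata simultaneously, which would deliver the uniform constant directly.
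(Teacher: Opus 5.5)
Your outline (pass to $\mathcal{D}_K(X,L)$, use (2) on the strata, apply Theorem~\ref{ddbarcurrentconicK}, return with Lemma~\ref{regularprop}) is the one the paper sketches. However, the two steps that carry the content fail as written.

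\emph{The stratum step.} $\Res_{Y_I}u_k$ is not $D_K$-closed for $k\ge1$. Taking residues in $D_Ku_k=-(\xi+\beta)\wedge u_{k-1}$ gives, on a deepest stratum, $D_K\Res_{Y_I}u_k=\pm\,\iota_I^*(\xi+\beta)\wedge\Res_{Y_I}u_{k-1}$. So the residues form a tower of the same system on $Y_I$, and on higher strata they also have log poles along deeper ones. Hypothesis (2) only handles the first step from a closed form, so by itself it does not make the boundary term $\iota_I^*(\xi+\beta)\wedge\Res_{Y_I}u_k$ exact. What you need is that the residue tower can be continued on $Y_I$. This comes from the $\Delta_1=0$ argument (Corollary~\ref{ddbarK} on $Y_I$) combined with (2), run from the deepest stratum upward with currents on the strata, as in the proofs of Theorems~\ref{ddbar01} and \ref{logddbar}. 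Here it matters that (2) is assumed for \emph{every} closed form, since that makes the conclusion independent of the lower-order choices. The correcting currents must also be $\mathcal{D}_K^c$-exact (compare the terms $\dbar D'_{h_{L^\star}}R_I$ in those proofs), otherwise Theorem~\ref{ddbarcurrentconicK} does not apply to the corrected current. Finally, the strata only give closedness of the corrected current; exactness on $X$ comes from that theorem. So your intermediate conclusion $T_k=\mathcal{D}_KR_k+T_{\Delta_1}$ is premature.

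\emph{Propagation of $\mathcal{D}_K^c$-exactness.} Lemma~\ref{regularprop} returns \emph{some} $u_{k+1}\in\cC^\infty_K(X,\Delta_1,L)$ with $D_Ku_{k+1}=-T_k$, with no relation to the current solution $\mathcal{D}_K^cS$. Your remedy, ``minimal norm via Theorem~\ref{smoothhodge} applied to the components $f_I$'', has no meaning:
\begin{itemize}
\item the $f_I$ are only local;
\item Theorem~\ref{smoothhodge} concerns $\cC^\infty_K(X,L)$ only;
\item forms with log poles along $\Delta_1$ are in general not even $L^2$, since the metrics are smooth at generic points of $\Delta_1$;
\item the paper points out after Theorem~\ref{CoHo2} that no $\ddbar$-lemma is available for log forms, which is precisely why one passes to currents.
\end{itemize}
Hence $T_{k+1}$ is not known to be $\mathcal{D}_K^c$-exact, and your induction does not get past $u_1$. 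The paper keeps this bookkeeping on the current side. It solves the whole system in $\mathcal{D}_K(X,L)$ modulo currents supported on $\Delta_1$, where Theorem~\ref{ddbarcurrentconicK} directly produces $\mathcal{D}_K^c$-exact solutions, and uses Lemma~\ref{regularprop} only to come back to log forms. A step-by-step version like yours would require correcting $u_{k+1}$ by a $D_K$-closed log form so that $u_{k+1}-\mathcal{D}_K^cS$ becomes exact modulo $\Delta_1$. Lemma~\ref{regularprop} alone does not give this.

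Two smaller points. Your condition $u_i\in\im D_K$ (your $u_i=D_Kw_i$) is impossible for $i\ge1$, since it forces $D_Ku_i=0$. The intended condition, as in the paper's $\Delta_1=0$ argument, is $u_i\in\im D_K^c$. Your convergence estimate is also only heuristic, because you have no bounds for the stratum corrections or for the regularity step.
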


\begin{proof}[Sketch of proof]
	First assume that $\Delta_1=0$.
	
	Let $u_0$ be a $D_K$-closed form with values in $L$. After changing the representative, we may assume that $u_0$ is harmonic. Then $D_K^c u_0=0$.
	By the hypothesis $(1)$, we have
	\[
	(\xi+\beta)\wedge u_0 \in \im D_K \cap \ker D_K^c.
	\]
	By the $\ddbar$-lemma \ref{ddbarK}, there exists $u_1 \in \cC^\infty_{\bullet,K}(X,L)\cap \im D_K^c$ such that
	\begin{equation}\label{par3}
		D_K u_1 = -(\xi+\beta)\wedge u_0.
	\end{equation}
	
	Now we consider the second equation in \eqref{simp4611}.
	By Remark~\ref{realsmooth}, $(\xi+\beta)\wedge u_1 \in \cC^\infty_{\bullet+1,K}(X,L)$.
	Since $u_1$ is $D_K^c$-exact, $(\xi+\beta)\wedge u_1$ is also $D_K^c$-exact. Moreover, $(\xi+\beta)\wedge u_1$ is $D_K$-closed, because $D_K u_1$ is a multiple of $(\xi+\beta)$.
	By the $\ddbar$-lemma \ref{ddbarK}, there exists $u_2 \in \cC^\infty_{\bullet,K}(X,L)\cap \im D_K^c$ such that
	\begin{equation}\label{par8}
		D_K u_2 = -(\xi+\beta)\wedge u_1.
	\end{equation}
	Iterating this procedure, we obtain successively
	\[
	D_K u_m = -(\xi+\beta)\wedge u_{m-1}, \qquad u_m \in \im D_K^c,
	\]
	for all $m\ge 1$, i.e.\ $u_0$ extends to arbitrary order.
	
	\medskip
	
	In the general case where $\Delta_1$ is not necessarily $0$, we first embed $\cC^\infty_{\bullet,K}(X,\Delta_1,L)$ into the current space $\mathcal{D}_K (X,L)$. In the sense of currents, using the current $\ddbar$-lemma \ref{ddbarcurrentconicK} and induction on every stratum of $\Delta_1$, we can solve \eqref{simp4611} modulo currents supported on $\Delta_1$. The assumpition $(2)$ is used in this place. 
	Finally, thanks to the regularity lemma \ref{regularprop}, we can solve \eqref{simp4611} in $\cC^\infty_{\bullet,K}(X,\Delta_1,L)$. We refer to \cite[Section 10]{CDHP} for details.
\end{proof}

As an application of Theorem~\ref{solveequs}, we study the jumping locus property on quasi-compact K\"ahler manifolds. Let $X$ be a compact K\"ahler manifold and let $D\subset X$ be an snc divisor. Set
\[
\Sigma_k^i(X/D)_{\rm DR}
:= \{ (L,\nabla)\in M_{DR}(X/D) \mid \dim \mathbb H^i(X,\Omega_X^\bullet(\log D)\otimes L)\ge k\}.
\]
By combining Theorem~\ref{solveequs} with Theorem~\ref{quasiiso}, we obtain the following.

\begin{cor}\label{nearbygeneric}
	Let $(L,\nabla)\in \Sigma_k^i(X/D)_{\rm DR}$ be a generic point, and suppose that
	\[
	\Rel\bigl(\Res_{D_j}(L,\nabla)\bigr) \in [0,1[
	\]
	for every irreducible component $D_j$ of $D$.
	Let $\mathbb D$ be the unit disc in $\CC$, and consider a holomorphic map
	\[
	\gamma:\mathbb D \to \Sigma_k^i(X/D)_{\rm DR}
	\]
	such that $\gamma(0)=(L,\nabla)$.
	Let $\gamma'(0)\in \mathcal H^{0,1}(X)\oplus H^0\bigl(X,\Omega_X(\log D)\bigr)$ denote the first-order deformation of $\gamma$.
	Then
	\begin{equation}\label{uppertorus}
		\dim \mathbb H^i\!\left(X,\Omega_X^\bullet(\log D)\otimes L_{\gamma(0)+z\gamma'(0)}\right)\ge k
	\end{equation}
	for every $|z|\ll 1$.
	Here $\Omega_X^\bullet(\log D)\otimes L_{\gamma(0)+z\gamma'(0)}$ is the logarithmic de Rham complex induced by $\gamma(0)+z\gamma'(0)\in M_{\rm DR}(X/D)$.
\end{cor}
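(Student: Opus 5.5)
The approach is to let the family $\gamma$ produce, for every $k$-dimensional space of classes at $\gamma(0)$, the first-order extensions required in Theorem~\ref{solveequs}. Then Theorem~\ref{solveequs} builds genuine $D_{K,z}$-closed families $\sum_i z^iu_i$ along the straight line $\gamma(0)+z\gamma'(0)$. Throughout, the hypothesis $\Rel\Res_{D_j}\in[0,1[$ is used, via Theorem~\ref{Deli} and Theorem~\ref{quasiiso}, to identify hypercohomology with the cohomology of $(\cC^\infty_{\bullet,K}(X,\Delta_1,L),D_K)$ for all nearby points on the line. This is possible because the residues along $\Delta_1+\Delta_3$ are constant on the connected component (property (3)), so the decomposition \eqref{decomp} and the metric $\omega_D$ do not change.

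\textbf{Step 1: first-order extension from genericity.} Since $(L,\nabla)$ is a generic point of $\Sigma^i_k(X/D)_{\rm DR}$, I may assume $\dim\mathbb H^i=k$ exactly at $\gamma(0)$. I may also assume the same holds along $\gamma(z)$ for $z\neq0$ small, with the same dimension in neighbouring degrees. Upper semicontinuity, together with genericity, gives local constancy of these dimensions. A locally constant dimension means the hypercohomology forms a vector bundle over the disc, so every class $[u_0]$ at $\gamma(0)$ lifts to a holomorphic family of closed forms $u(z)$ with $D_{K,\gamma(z)}u(z)=0$. Differentiating at $z=0$, and writing $D_{K,\gamma(z)}=D_K+z(\xi+\beta)\wedge\cdot+O(z^2)$ after trivializing $L_{\gamma(z)}$ smoothly, yields $D_K\dot u=-(\xi+\beta)\wedge u_0$. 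This is hypothesis (1) of Theorem~\ref{solveequs}. Hypothesis (2) is obtained the same way on each stratum $Y_I$ of $\Delta_1$. Genericity is imposed simultaneously for the finitely many residual pairs $(\iota_I^*L,\Res\nabla)$ on the strata, so their cohomology is also locally constant along $\gamma|_{Y_I}$ and every closed form there extends to first order.

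\textbf{Step 2: construction of the families.} Apply Theorem~\ref{solveequs} to a basis $u_0^{(1)},\dots,u_0^{(k)}$ of harmonic representatives, viewed as elements of $\cC^\infty_{i,K}(X,\Delta_1,L)$ through Theorem~\ref{quasiiso}. This produces convergent series $U^{(j)}(z)=\sum_m z^m u^{(j)}_m$ satisfying $(D_K+z(\xi+\beta))U^{(j)}(z)=0$, where $D_K+z(\xi+\beta)$ is exactly the operator attached to $\gamma(0)+z\gamma'(0)$.

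\textbf{Step 3: independence of the classes, the main obstacle.} It remains to show that the classes $[U^{(j)}(z)]$ stay linearly independent in cohomology for small $z$. Suppose instead that $\sum c_jU^{(j)}(z)=D_{K,z}w(z)$. The approach is to use the normalization $u_m\in\im D_K$ for $m\ge1$, together with the closedness of $\im D_K$ from Theorem~\ref{smoothhodge}, and pair against the harmonic forms dual to the $u_0^{(j)}$. For $m\ge1$ the terms $u_m$ are $D_K$-exact and hence orthogonal to the harmonic space, while the perturbation $zD_K$-correction is $O(|z|)$. The pairing matrix is therefore the identity plus $O(|z|)$, hence invertible. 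To control $w(z)$ I need a uniform estimate $\|w\|\le C\|D_{K,z}w\|$ on the orthogonal complement of the kernel, and I expect this to be the hardest point. I would obtain it from the closed-range estimate coming out of the proof of Theorem~\ref{smoothhodge}, which is stable under the zeroth-order bounded perturbation $z(\xi+\beta)$ by Remark~\ref{realsmooth}. When $\Delta_1\ne0$, the same argument must be run in the current space $\mathcal D_K(X,L)$, using Lemma~\ref{regularprop} to return to logarithmic forms. With independence established, \eqref{uppertorus} follows.
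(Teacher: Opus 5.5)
Your Steps 1--2 follow the paper's route. The paper obtains the corollary by feeding the first-order extensions given by genericity into Theorem~\ref{solveequs} and identifying cohomology via Theorem~\ref{quasiiso}. The gap is in Step 3.

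\textbf{Step 3 fails.} The uniform estimate $\|w\|\le C\|D_{K,z}w\|$ on $(\ker D_{K,z})^\perp$ does not follow from closed range of $D_K$ plus boundedness of $z(\xi+\beta)\wedge\cdot$. For each fixed $z$ the range stays closed, but the constant blows up whenever the kernel in degree $i-1$ jumps at $z=0$. No argument using only the ingredients of Step 3 can work, as the following example shows.
\begin{itemize}
\item[(a)] Take $X$ an elliptic curve, $D=\emptyset$, $(L,\nabla)$ trivial, so $D_K=d$. Let $a=\xi+\beta\neq 0$ be harmonic, so that $D_{K,z}=d+z\,a\wedge\cdot$ along the line.
\item[(b)] For small $z\neq 0$ we have $\ker D_{K,z}=0$ on functions, while $\|D_{K,z}1\|=|z|\,\|a\|$. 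So your constant is at least $1/(|z|\,\|a\|)$.
\item[(c)] The class $u_0=a$ satisfies hypothesis (1) with $v_1=0$, and Theorem~\ref{solveequs} may return $u_m=0$ for $m\ge 1$. Yet $a=D_{K,z}(1/z)$ is exact for every $z\neq 0$.
\end{itemize}
The corollary is not contradicted, because here $\Sigma^1_1$ is a point. The lesson is that first-order extendability of the degree-$i$ classes never prevents them from becoming exact.

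\textbf{What is missing: use $\gamma\subset\Sigma^i_k$ in degree $i-1$ too.}
\begin{itemize}
\item[(a)] Since $\dim\mathbb H^i$ is constant along $\gamma$, the degree-$i$ cohomology forms a bundle over $\gamma$. For a $D_K$-closed $\eta$ of degree $i-1$, the holomorphic family $z^{-1}D_{K,\gamma(z)}\eta$ is $D_{K,\gamma(z)}$-closed and exact for $z\neq 0$, and it equals $(\xi+\beta)\wedge\eta$ at $z=0$. Hence the first-order map $[\eta]\mapsto[(\xi+\beta)\wedge\eta]$ from degree $i-1$ to degree $i$ vanishes.
\item[(b)] Apply Theorem~\ref{solveequs} in degree $i-1$ as well, with the analogous genericity hypotheses on the strata $Y_I$. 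This extends every $(i-1)$-class along the line $\gamma(0)+z\gamma'(0)$.
\item[(c)] Pass to a finite-dimensional model $(\mathbb H^\bullet(\gamma(0)),\delta_z)$ of the family of complexes along the line, with $\delta_0=0$. This is where closed range at $z=0$ and boundedness of the perturbation legitimately enter, for instance by homological perturbation from Theorem~\ref{smoothhodge} when $\Delta_1=0$.
\item[(d)] In that model, the extensions in degrees $i-1$ and $i$ force $\delta_z$ to vanish on degrees $i-1$ and $i$. Hence $\dim\mathbb H^i$ is constant, and so $\ge k$, along the line.
\end{itemize}
Alternatively, when $\Delta_1=0$, extend the harmonic forms $h_l$ to $D_{K,z}^\star$-closed families $h_l(z)$. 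The first-order obstruction is exactly the vanishing in (a). Then $\langle D_{K,z}w,h_l(z)\rangle=0$, and independence follows with no bound on $w$ at all.
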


As a consequence, we obtain the jumping locus property for rank-one local systems on $X\setminus D$.

\begin{thm}\label{complete2}
	Let $X$ be a compact K\"ahler manifold and let $D=\sum_{i=1}^{k} D_i$ be a simple normal crossing divisor.
	Let $M'_{\rm B}(X/D) \subset M_{\rm B}(X/D)$ be a connected component. Then the jumping locus
	\[
	\Sigma_k^i(X\setminus D)_{\rm B}
	:= \{ \tau \in M'_{\rm B}(X/D) \mid \dim H^i(X\setminus D,\tau) \ge k\}
	\]
	is a finite union of translates of subtori.
\end{thm}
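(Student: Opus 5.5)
The plan is to deduce Theorem~\ref{complete2} from Corollary~\ref{nearbygeneric} by transferring to the de Rham side and then using a classical "analytic set closed under tangent-line deformations" argument.

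\textbf{Transfer to the de Rham side.} The connected component $M'_{\rm B}(X/D)$ is identified with a torus $(\CC^\star)^{b}$ times a finite group. Through the surjective map $\rho$ of \eqref{introsurjj}, we lift it to the part of $M_{\rm DR}(X/D)$ where $\Rel\Res_{D_j}(L,\nabla)\in[0,1[$ for every $j$. This normalisation is possible by property (4) of Section~\ref{hodgeloc}, and it ensures $\Res_{D_j}\notin\mathbb N^\star$. Theorem~\ref{Deli} then identifies $H^i(X\setminus D,\tau)$ with $\mathbb H^i(X,\Omega_X^\bullet(\log D)\otimes L)$. On a connected component, the residues along $\Delta_1+\Delta_3$ are constant by property (3), so the tangent directions are exactly $\mathcal H^{0,1}\oplus H^0(X,\Omega^1_X(\log\Delta_2))$. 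Moreover, locally the map $\rho$ is the exponential of the linear structure of $M_{\rm DR}$: the straight lines $\gamma(0)+z\gamma'(0)$ map onto one-parameter subgroups (translates) of the torus.

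\textbf{Analyticity.} By upper semicontinuity of hypercohomology dimension in the holomorphic family, $\Sigma_k^i(X\setminus D)_{\rm B}$ is a closed analytic subset of $M'_{\rm B}$. Semicontinuity is obtained from Kodaira's theory or from finiteness of $\ker\Delta_K$ together with Theorem~\ref{L2cohoisom}. On the algebraic side, $H^i(X\setminus D,\tau)$ is computed by a finite complex of free modules over the group ring, so this set is in fact Zariski closed in the algebraic torus $M'_{\rm B}$. It therefore has finitely many irreducible components.

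\textbf{Main step: each irreducible component is a translate of a subtorus.} Let $Z$ be an irreducible component and $\tau\in Z$ a generic smooth point, with lift $(L,\nabla)$. Let $\gamma:\DD\to Z$ be any holomorphic disc through $\tau$. Corollary~\ref{nearbygeneric} says that the line $\gamma(0)+z\gamma'(0)$ stays in $\Sigma_k^i$ for $|z|\ll1$. Taking the image under $\rho$ and using analytic continuation, the whole one-parameter subgroup $\tau\cdot\exp(\CC\,\gamma'(0))$ lies in $\Sigma_k^i$, and hence in $Z$ by irreducibility near the generic point. Thus, near $\tau$, the set $Z$ contains $\tau\exp(T_\tau Z)$. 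Since $\dim Z=\dim T_\tau Z$, this gives $Z=\tau\cdot\overline{\exp(T_\tau Z)}$ locally.

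The key remaining point is to show that $T_\tau Z$ is a rational subspace, i.e.\ that $\exp(T_\tau Z)$ is closed. I would argue as follows. The same tangent-line property holds at every generic point of $Z$, so $Z$ is an irreducible Zariski-closed subvariety of an algebraic torus that is invariant under $\exp(V)$ for $V=T_\tau Z$. Its stabiliser is therefore an algebraic subgroup containing the Zariski closure of $\exp V$. Comparing dimensions, $Z$ is a coset of the identity component of that stabiliser, which is a subtorus.

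I expect the main obstacle to be this last step: converting "$Z$ contains translates of $\exp(V)$ through generic points" into a genuine subtorus coset. This is exactly where Zariski closedness of $Z$ (not mere analyticity) is needed. I would also need to check carefully that $\rho$ carries straight lines of $M_{\rm DR}$ to group translates while the residue normalisation is preserved along the line.
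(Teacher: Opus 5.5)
Your proposal is correct and follows the paper's route: normalise the residues so that $\Rel\Res_{D_j}\in[0,1[$, use Theorem~\ref{Deli} to get a local biholomorphism between the de Rham and Betti jumping loci near a general point, and then invoke Corollary~\ref{nearbygeneric}. Your extra steps make explicit the passage from local linearity to global subtori, which the paper's last sentence leaves implicit: Zariski closedness of the Betti locus via a finite free complex over the group ring, analytic continuation of $z\mapsto\tau\exp(zv)$, and a dimension count on the Zariski closure $\tau H$ of $\tau\exp(T_\tau Z)$. That dimension count already gives $Z=\tau H^0$, so you do not need $\exp(V)$-invariance of all of $Z$.
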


\begin{proof}
	Pick a general element $\tau\in \Sigma_k^i(X\setminus D)_{\rm B}$.
	Let $(L,\nabla)\in M_{\rm DR}(X/D)$ be such that $\tau = \rho(L,\nabla)$ and $\Rel(\Res(L,\nabla))\in [0,1[$ for every component of $D$.
	
	Then for any $(L',\nabla')$ in a sufficiently small neighborhood of $(L,\nabla)$, the residues are not strictly positive integers along any component of $D$.
	Therefore, by Theorem~\ref{Deli}, we have
	\[
	\dim \mathbb H^{i}\bigl(X,\Omega_X^\bullet(\log D)\otimes L'\bigr)
	= \dim H^{i}\bigl(X\setminus D,\rho(L',\nabla')\bigr).
	\]
	Hence there exist neighborhoods $U$ of $(L,\nabla)$ and $V$ of $\tau$ such that
	\[
	\rho:\ \Sigma_k^i(X/D)_{\rm DR}\cap U \longrightarrow \Sigma_k^i(X\setminus D)_{\rm B}\cap V
	\]
	is a biholomorphism.
	We then conclude from Corollary~\ref{nearbygeneric} that $\Sigma_k^i(X\setminus D)_{\rm B}$ is a finite union of translates of subtori in $M'_{\rm B}(X\setminus D)$.
\end{proof}

	\section{Applications, III: deformation of log Calabi--Yau manifolds}\label{logCY}
	
	In this last section, we would like to present a nice application of conic Hodge theory to the deformation theory of log Calabi--Yau Kähler manifolds, proved recently by R.~Zhang \cite{Zhang}.
	
	In \cite{KKP}, L.~Katzarkov, M.~Kontsevich, and T.~Pantev proved that the locally trivial deformations of a log Calabi--Yau projective pair $(X,D)$ are unobstructed. In his recent paper \cite{Zhang}, using conic Hodge theory, R.~Zhang generalized the theorem of Katzarkov--Kontsevich--Pantev to the Kähler setting. More precisely, R.~Zhang proved the following result.
	
	\begin{thm}\cite[Thm A]{Zhang}\label{unobstr}
		Let $X$ be a compact Kähler manifold, and let
		\[
		D = \sum_{i=1}^s D_i
		\]
		be a simple normal crossing divisor on $X$. Assume that there exists a collection of
		weights $\{a_i\}_{1 \le i \le s} \subset [0,1] \cap \mathbb{Q}$ such that
		\begin{equation}\label{eq:logCY}
			c_1(K_X) + \sum_{i=1}^s a_i c_1(D_i) = 0 
			\quad \text{in } H^2(X,\mathbb{Q}).
		\end{equation}
		Then the locally trivial (infinitesimal) deformations of the pair $(X,D)$ are unobstructed. In particular, the pair $(X,D)$ admits a semi-universal (Kuranishi) family over a smooth base.
	\end{thm}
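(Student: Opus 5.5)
The approach follows the Bogomolov--Tian--Todorov strategy, with the classical $\ddbar$-lemma replaced by its conic and current versions (Corollary~\ref{conicddabr} and Theorem~\ref{ddbarcurrentconic}). Locally trivial deformations of $(X,D)$ are governed by the dgla
\[
\mathfrak g = \bigoplus_q A^{0,q}\bigl(X, T_X\langle D\rangle\bigr),
\]
with differential $\dbar$ and the Schouten bracket. Here $T_X\langle D\rangle$ is the logarithmic tangent bundle, so $H^1(X,T_X\langle D\rangle)$ is the tangent space and $H^2(X,T_X\langle D\rangle)$ is the obstruction space. It therefore suffices to check the $T^1$-lifting criterion (Ran--Kawamata), or equivalently to construct a formal Maurer--Cartan solution $\varphi(t)=\sum \varphi_k t^k$ with $\varphi_1$ an arbitrary class, and then prove convergence as in Theorem~\ref{extencla}.

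\textbf{Step 1 (the twisted volume form).} By \eqref{eq:logCY}, the $\mathbb Q$-line bundle $L:=-K_X-\sum a_iD_i$ is numerically trivial. Split $D=\Delta_1+\Delta_2$, where $\Delta_1$ collects the components with $a_i=1$ and $\Delta_2$ those with $a_i<1$. We then put on $L$ a metric $h_L$ with
\[
i\Theta_{h_L}(L)=-\sum_{D_i\subset\Delta_2} a_i [D_i]+\sum_{D_i\subset\Delta_1}(\cdots),
\]
arranged so that $\Omega:=K_X+\Delta_1$ twisted by $L':=-\sum_{\Delta_2}a_iD_i$ carries a flat-away-from-$\Delta_2$ metric, i.e.\ $\theta=0$ in \eqref{singmetr}.

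\textbf{Step 2 (Tian--Todorov isomorphism).} Contraction gives an isomorphism
\[
\iota\colon A^{0,q}\bigl(T_X\langle D\rangle\bigr)\xrightarrow{\ \sim\ } A^{n-1,q}(X,D, K_X^{-1}\otimes\cdots),
\]
that is, into $L$-valued $(n-1,q)$-forms with log poles along $\Delta_1$, which lie in $\cC^\infty(X,\Delta_1,L)$. We then verify the Tian--Todorov formula: $\iota[\varphi,\psi]$ is expressed through $D'_{h_L}$ applied to $\iota(\varphi\wedge\psi)$, $\iota\varphi$ and $\iota\psi$, where $D'_{h_L}$ is taken in the conic sense. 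This is a local computation on the ramified covers $\pi_i$, using Proposition~\ref{orbderiv}; the weight $\varphi_{L,0}$ is pluriharmonic because $\theta=0$.

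\textbf{Step 3 (induction).} Assume $\varphi_1,\dots,\varphi_{k-1}$ have been constructed with $\iota\varphi_j\in\ker D'_{h_L}$. The obstruction $\sum[\varphi_j,\varphi_{k-j}]$ then maps under $\iota$ to an element of $\ker\dbar\cap\im D'_{h_L}$ in $\cC^\infty(X,\Delta_1,L)$, and we want to write it as $\dbar D'_{h_L}$ of something. When $\Delta_1=0$ this is exactly Corollary~\ref{conicddabr} (first case, $\theta=0$). With log poles along $\Delta_1$ we instead regard the forms as conic currents (Example~\ref{imporantex}). We apply Theorem~\ref{ddbarcurrentconic} to get exactness modulo an order-zero current supported on $\Delta_1$, inducting over the strata of $\Delta_1$ via residues as in the proof of Theorem~\ref{ddbar01}. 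We then return to genuine log-smooth forms by the regularity Lemma~\ref{reglemma1} together with Theorem~\ref{classsol} (Corollary~\ref{regcor}); this requires the weights to lie in $[-1,0[$, which holds since $a_i\in[0,1]$. This yields $\varphi_k$ with $\iota\varphi_k\in\im D'_{h_L}$, and in particular $D'_{h_L}$-closed, completing the induction. Convergence follows by choosing minimal (Green operator) solutions, using the conic Sobolev/Hölder estimates coming from Theorem~\ref{CoHo2}.

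\textbf{Main obstacle.} The hardest step is the $\Delta_1$ part of Step 3. Because $g_D$ is not flat, Theorem~\ref{CoHo2} provides no $\ddbar$-lemma there, so the passage through currents and residues on every stratum is needed. One must also check that the residues of $\iota\varphi$ along strata satisfy the same log Calabi--Yau flatness, so that the lower-dimensional $\ddbar$-lemmas apply.
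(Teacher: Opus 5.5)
\textbf{There is a genuine gap in Step 3.} The Bogomolov--Tian--Todorov induction needs, at every order, a \emph{classical} smooth log Beltrami differential $\varphi_k$ with $D'_{h_L}\iota\varphi_k=0$. Classicality keeps the brackets in $A^{0,\bullet}(X,T_X\langle D\rangle)$, so that the Tian--Todorov formula applies at the next order. Without $D'_{h_L}$-closedness, the next obstruction picks up terms $\varphi_j\lrcorner D'_{h_L}\iota\varphi_{k-j}$ and is no longer $D'_{h_L}$-exact.

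The tools you cite do not give this.
Corollary \ref{conicddabr} and Theorem \ref{ddbarcurrentconic} give $\dbar D'_{h_L}$-primitives only in the conic (resp.\ current) sense. A conic-smooth primitive need not come from a smooth log vector field, so even when $\Delta_1=0$ you must return to $A^{\bullet,\bullet}(X,D,L)$.
The only way back in the paper is Corollary \ref{regcor} (Lemma \ref{reglemma1} with Theorem \ref{classsol}). It produces some $\beta\in A^{p,q-1}(X,D,L)$ with $\dbar\beta=u$ and says nothing about $D'_{h_L}\beta$. Its proof runs through the Hodge decomposition of $(E_p,g_D)$, and, as the paper notes after Theorem \ref{CoHo2}, $g_D$ is not flat and no $\ddbar$-lemma is available there.
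So ``this yields $\varphi_k$ with $\iota\varphi_k\in\im D'_{h_L}$'' is really a full $\dbar D'_{h_L}$-lemma for classical log forms. That is strictly stronger than anything established in the paper. Your convergence claim via Theorem \ref{CoHo2} is also unsupported, since the regularity lemma is not quantitative.

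\textbf{What the paper does instead.} It derives Theorem \ref{unobstr} from the weaker vanishing Theorem \ref{logddbar}, applied to $L=K_X+D$, so that $T_X\langle D\rangle\simeq\Omega^{n-1}_X(\log D)\otimes L^\star$ and $c_1(L)=\sum(1-a_i)c_1(D_i)$.
The weights of $h_{L^\star}$ are therefore $-(1-a_i)\in[-1,0[$ on the components with $a_i<1$. Your Step 1 has $-a_i$, which would give weight $0$ wherever $a_i=0$. As in your split, $\Delta_1$ is the union of the components with $a_i=1$.
Writing $D'=D'_{h_{L^\star}}$, Theorem \ref{logddbar} only asserts that $\dbar D'\alpha=0$ implies $D'\alpha=\dbar\chi$ with $\chi$ classical log. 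This is exactly what your chain of residue induction, current $\ddbar$-lemma and regularity lemma proves.
The key point is that $\alpha$ is not assumed $\dbar$-closed. So for the BV operator $\Delta=\iota^{-1}D'\iota$ on log polyvector fields, the theorem says $\ker\dbar\cap\im\Delta\subset\im\dbar$. Hence the chain $\dbar a_{k+1}=-\Delta a_k$, started from any $\dbar$-closed $a_0$, can always be continued; this is a degeneration property in the sense of \cite{KKP}.
Unobstructedness then follows by a formal argument, for which the paper refers to \cite{Zhang,Wan18}; it never requires $D'$-closed representatives. The smooth Kuranishi base comes from Kuranishi's theorem rather than from convergence estimates.

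To repair your proof, aim Step 3 at this weak statement, and replace the explicit power series and its convergence by that formal argument.
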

	
	Note that some special cases of Theorem \ref{unobstr} are proved in \cite{LRW, Wan18}. 
	As explained in \cite{Zhang} and \cite{Wan18}, Theorem~\ref{unobstr} can be derived from the following vanishing theorem.
	
	\begin{thm}\label{logddbar}\cite[Thm B]{Zhang}
		Let $X$ be a compact Kähler manifold, and let $D = \sum D_i$ be a simple normal crossing divisor on $X$. 
		Let $L$ be a holomorphic line bundle on $X$ such that
		\[
		c_1(L) = \sum_i a_i D_i \in H^2(X,\mathbb{Q})
		\]
		for some $a_i \in [0,1] \cap \mathbb{Q}$.
		
		Let $h_L$ be a metric on $L$ such that
		\[
		i\Theta_{h_L} = \sum a_i [D_i].
		\]
		Let $h_{L^\star}$ be the dual metric on $L^\star$.
		Let $\alpha \in A^{p,q}\bigl(X,D,L^\star\bigr)$ satisfy
		\[
		\dbar D'_{h_{L^\star}} \alpha = 0
		\quad \text{on } X \setminus D.
		\]
		Then there exists
		\[
		\chi \in A^{p,q-1}\bigl(X,D,L^\star\bigr)
		\]
		such that
		\begin{equation}\label{eq:logpartial}
			\dbar \chi = D'_{h_{L^\star}} \alpha
			\quad \text{on } X \setminus D.
		\end{equation}
	\end{thm}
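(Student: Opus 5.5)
The plan is to reduce the statement to Corollary~\ref{regcor}, applied to the dual bundle $L^\star$. Concretely, we will show that $u:=D'_{h_{L^\star}}\alpha$ is $\dbar$-exact in the conic current space modulo an order-zero current supported on part of $D$.

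\textbf{Step 1: setting up the conic framework.} Split $D=\Delta_1+\Delta_2$, where $\Delta_2$ consists of the components with $a_i\in\,]0,1]$ and $\Delta_1$ of those with $a_i=0$. The curvature of $h_{L^\star}$ is then $-\sum_{D_i\subset\Delta_2}a_i[D_i]$, with coefficients in $[-1,0[\,\cap\,\mathbb Q$, and $\theta=0$. Fix $m$ large and divisible, and let $\omega_\cC$ be a conic metric along $(1-\frac1m)\Delta_2$. By the Example of Section~1, every form in $A^{\bullet,\bullet}(X,D,L^\star)$ lies in $\cC^\infty(X,\Delta_1,L^\star)\subset\mathcal D(X,L^\star)$. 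In particular $\alpha$ and $u$ are conic-smooth along $\Delta_2$, with log poles only along $\Delta_1$. Since $\theta=0$, we have $(D'_{h_{L^\star}})^2=0$ and $\dbar D'+D'\dbar=0$ on $X\setminus D$. Hence $u$ is $\dbar$-closed and $D'$-exact on $X\setminus D$, and the only failure of these identities in the current sense comes from divisorial (residue) terms on strata of $\Delta_1$. Along $\Delta_2$, conic-smoothness together with Proposition~\ref{int} kills the boundary terms.

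\textbf{Step 2: induction on the strata of $\Delta_1$.} Here we follow the proof of Theorem~\ref{ddbar01} verbatim, with $\theta=0$ and with $u$ in place of $\lambda/s_{\Delta_1}$. Write $\Delta_1=\sum E_i$, and let $E_J$ denote the strata. Taking residues gives $\Res_J(u)=\pm D'_{h_{L^\star}}\Res_J(\alpha)$ on $E_J$. On the deepest stratum this form is $\dbar$-closed and $D'$-exact in conic sense, since the restricted metric on $E_J$ still has the required conic divisorial form because $\Delta_1+\Delta_2$ is snc. The first bullet of Corollary~\ref{conicddabr} then writes it as $\dbar R_J$, with $R_J$ orthogonal to the harmonic forms.

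For a general stratum, $\dbar\Res_J(u)$ is the sum of the currents $[\dbar R_{jJ}]$ coming from the deeper strata. Consequently $\Res_J(u)-\sum_j[R_{jJ}]$ is $\dbar$-closed. Its orthogonality to harmonic forms is checked as follows:
\begin{itemize}
\item the first term is $D'$-exact, hence orthogonal to the harmonic forms;
\item for the second term, use that with $\theta=0$ the Bochner formula makes harmonic forms $D'$-closed and stable under restriction to deeper strata, exactly as in the proof of Theorem~\ref{ddbar01}.
\end{itemize}
Theorem~\ref{CoHo1} (or, if necessary, the current decomposition of Theorem~\ref{decompcurrentconic}) then produces $R_J$, and we arrange the skew-symmetry relations. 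At level $k=0$ we obtain $u=\dbar T+T_{\Delta_1}$ in $\mathcal D(X,L^\star)$, where $T_{\Delta_1}$ is an order-zero current supported on $\Delta_1$.

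\textbf{Step 3: conclusion.} Corollary~\ref{regcor} applies, since the coefficients lie in $[-1,0[$ and $u\in A^{p+1,q}(X,D,L^\star)$. It yields $\chi\in A^{p,q-1}(X,D,L^\star)$ with $\dbar\chi=D'_{h_{L^\star}}\alpha$ on $X\setminus D$, which is the assertion of Theorem~\ref{logddbar}.

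\textbf{Main obstacle.} I expect the hardest part to be the bookkeeping in Step 2. This has two parts:
\begin{itemize}
\item One must check that the current-sense $\dbar$ and $D'$ of log forms produce exactly the residue currents on $\Delta_1$ and nothing on $\Delta_2$. This relies on conic-smoothness of log forms when $a_i\in\,]0,1]$.
\item One must check the compatibility of harmonic forms under restriction to strata in the twisted conic setting.
\end{itemize}
I would first try to handle both by passing to the local $m$-ramified covers, where everything becomes classical with a smooth flat twist.
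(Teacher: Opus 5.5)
Your architecture matches the paper's: split $D=\Delta_1+\Delta_2$ by $a_i=0$ versus $a_i>0$, view $u=D'_{h_{L^\star}}\alpha$ as a conic current, induct over the strata of $\Delta_1$ from the deepest one, and finish with Corollary~\ref{regcor}. The inductive step, however, rests on a false claim. You justify the orthogonality of $\sum_j[R_{jJ}]$ to harmonic forms on $E_J$ by saying that harmonic forms restrict to harmonic forms on deeper strata, ``exactly as in the proof of Theorem~\ref{ddbar01}''. That fact only holds for test forms of type $(0,\bullet)$.

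In Theorem~\ref{ddbar01}, $\lambda$ has type $(n,q)$, so the dual test forms on every stratum are $(0,\bullet)$-forms. Such a form that is $\dbar$- and $D'$-closed is automatically $\dbar^\star$-closed: $\Lambda$ kills it, so $\dbar^\star=-i[\Lambda,D']$ annihilates it. Restriction preserves both closedness conditions, hence preserves harmonicity.

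Here the test forms dual to $\Res_J(D'\alpha)$ have type $(n-p-1,\ast)$, which is mixed in general. The restriction of a harmonic form of mixed type is $\dbar$- and $D'$-closed but not harmonic. Already in the allowed case $L$ trivial, $a_i=0$, take $C\times C$ with the product metric and a holomorphic $1$-form $\eta$. Then $p_1^\star\eta\wedge\overline{p_2^\star\eta}$ is harmonic, but its restriction to the diagonal is $\eta\wedge\bar\eta$, which is not a constant multiple of the area form. So orthogonality of $R_{jJ}$ to harmonic forms on $E_{jJ}$ does not yield $\int_{E_{jJ}}R_{jJ}\wedge\xi|_{E_{jJ}}=0$, and your induction does not close as written.

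The missing ingredient is to carry the invariant that every $R_J$ is $D'_{h_{L^\star}}$-exact. Your deepest step already gives $R_J=D'v$ via Corollary~\ref{conicddabr}. Suppose the $R_{jJ}$ are $D'$-exact. Then $S_J:=\Res_J(u)-\sum_j[R_{jJ}]$ is $\dbar$-closed and also $D'$-exact, because $\Res_J(u)=\pm D'\Res_J(\alpha)$ and $[D'R']=\pm D'[R']$. The conic current $\ddbar$-lemma (Theorem~\ref{ddbarcurrentconic}, with $\theta=0$) then gives $S_J=\dbar D'R'_J$, and you set $R_J:=D'R'_J$.

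This is exactly the paper's claim $\Res_J T=\dbar D'R_J+\sum_j[D'R_{jJ}]$. In that form no orthogonality check, and no statement about restricting harmonic forms, is needed.

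If you prefer to keep your de Rham--Kodaira formulation, the canonical choice $R_J:=\dbar^\star\cG(S_J)$ also works. Indeed $D'S_J=0$, and the Kähler identity give $R_J=iD'\Lambda\cG(S_J)$, which is $D'$-exact. Orthogonality of $[R_{jJ}]$ then follows from the $D'$-closedness of restricted harmonic forms, not from their harmonicity. An arbitrary solution orthogonal to harmonic forms, which is what you allow, is not enough.
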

	
	The original proof of Theorem~\ref{logddbar} uses conic Hodge decomposition theory together with J.~King's theory \cite{Kin71,Kin83} on the sheaf of currents annihilating null forms on $D$. Here we present a slightly different approach, replacing King’s theory with the regularity lemma Corollary~\ref{regcor}. 
	
	\begin{proof}[Proof of Theorem~\ref{logddbar}]
		We decompose
		\[
		D = \Delta_1 + \Delta_2,
		\]
		where $\Delta_1 := \sum_{i=1}^m D_i$ consists of the components such that $a_i = 0$, and $\Delta_2$ consists of the components such that $a_i > 0$.
		Then $\alpha \in \mathcal{C}^\infty(X,\Delta_1,L^\star)$. By Example~\ref{imporantex}, $\alpha \in \mathcal{D}(X,L^\star)$ can be viewed as an $L^\star$-valued current in the conic sense.
		
		We consider the conic current
		\[
		T := D'_{h_{L^\star}} \alpha \in \mathcal{D}(X,L^\star).
		\]
		Note that for every subset $\{i_1,\dots,i_k\} \subset \{1,\dots,m\}$, we have
		\begin{equation}\label{residuestra}
			\Res_{i_1,\dots,i_k} T
			= D'_{h_{L^\star}} \Res_{i_1,\dots,i_k} \alpha
			\quad \text{on } D_{i_1} \cap \cdots \cap D_{i_k}.
		\end{equation}
		
		We claim that for every subset $\{i_1,\dots,i_k\} \subset \{1,\dots,m\}$, there exist conic currents $R_{i_1,\dots,i_k}$ (resp.~$R_{j,i_1,\dots,i_k}$) supported on $D_{i_1}\cap\cdots\cap D_{i_k}$ of order $0$ (resp.~$D_j\cap D_{i_1}\cap\cdots\cap D_{i_k}$), satisfying the skew-symmetry relations, such that
		\begin{equation}\label{claimfin}
			\Res_{i_1,\dots,i_k} T
			= \dbar D'_{h_{L^\star}} R_{i_1,\dots,i_k}
			+ \sum_{j=1}^m [D'_{h_{L^\star}} R_{j,i_1,\dots,i_k}]
			\in \mathcal{D}(D_{i_1}\cap\cdots\cap D_{i_k},L^\star).
		\end{equation}
		
		We prove the claim by induction on $k$, starting from $k=m$.
		
		For $k=m$, by \eqref{residuestra}, we have
		\[
		\Res_{1,\dots,m} T
		= D'_{h_{L^\star}} \Res_{1,\dots,m} \alpha
		\]
		in the sense of conic currents on $D_1 \cap \cdots \cap D_m$.
		The left-hand side is $\dbar$-exact, and the right-hand side is $D'_{h_{L^\star}}$-exact. By the $\ddbar$-lemma for conic currents (Lemma~\ref{ddbarcurrentconic}), we obtain
		\[
		\Res_{1,\dots,m} T
		= \dbar D'_{h_{L^\star}} R_{1,\dots,m}
		\]
		for some current $R_{1,\dots,m}$ on $D_1 \cap \cdots \cap D_m$ of order $0$, which can be chosen to satisfy the skew-symmetry relations. Thus, the claim holds for $k=m$.
		
		\medskip
		
		Assume by induction that \eqref{claimfin} holds for $k$. For the $(k-1)$-index case, we have
		\[
		\dbar \Res_{i_1,\dots,i_{k-1}} T
		= \sum_{j=1}^m \Res_{j,i_1,\dots,i_{k-1}} T
		\]
		\[
		= \sum_{j=1}^m [\dbar D'_{h_{L^\star}} R_{j,i_1,\dots,i_{k-1}}]
		+ \sum_{s,j=1}^m [D'_{h_{L^\star}} R_{s,j,i_1,\dots,i_{k-1}}]
		= \sum_{j=1}^m [\dbar D'_{h_{L^\star}} R_{j,i_1,\dots,i_{k-1}}],
		\]
		where the last equality follows from the skew-symmetry of $R_{s,j,i_1,\dots,i_{k-1}}$.
		
		Thus, the conic current
		\[
		\Res_{i_1,\dots,i_{k-1}} T
		- \sum_{j=1}^m [D'_{h_{L^\star}} R_{j,i_1,\dots,i_{k-1}}]
		\]
		is $\dbar$-closed. Moreover, by \eqref{residuestra}, it is $D'_{h_{L^\star}}$-exact.
		The $\ddbar$-lemma for conic currents (Lemma~\ref{ddbarcurrentconic}) then implies that it is $\ddbar$-exact.
		Hence, there exist order zero currents $R_{i_1,\dots,i_{k-1}}$ on $D_{i_1}\cap\cdots\cap D_{i_{k-1}}$ such that
		\[
		\Res_{i_1,\dots,i_{k-1}} T
		= \dbar D'_{h_{L^\star}} R_{i_1,\dots,i_{k-1}}
		+ \sum_{j=1}^m [D'_{h_{L^\star}} R_{j,i_1,\dots,i_{k-1}}].
		\]
		We may further require that $R_{i_1,\dots,i_{k-1}}$ satisfy the skew-symmetry relations.
		This completes the induction and proves the claim.
		
		\medskip
		
		Applying the claim \eqref{claimfin} for $k=0$, we find conic currents $R_i$ supported on $D_i$ of order $0$ such that
		\[
		T - \sum_{i=1}^m [D'_{h_{L^\star}} R_i]
		\]
		is $\ddbar$-exact on $X$ in the conic current sense.
		By the regularity lemma (Corollary~\ref{regcor}), the theorem follows.
	\end{proof}

	\end{document}